\newif\ifagt
\title{An abelian embedding for Moore spectra}
\author{N.~P.~Strickland}
\address{
School of Mathematics and Statistics\\
University of Sheffield\\
The Hicks Building\\
Hounsfield Road\\
Sheffield\\
S3 7RH\\
UK}
\email{N.P.Strickland@sheffield.ac.uk}
\urladdr{http://www.shef.ac.uk/nps}
\DeclareMathOperator{\Ab}{Ab}
\DeclareMathOperator{\ElAb}{ElAb}
\DeclareMathOperator{\Moore}{Moore}
\DeclareMathOperator{\Postnikov}{Postnikov}
\DeclareMathOperator{\Spectra}{Spectra}
\DeclareMathOperator{\ED}{ED}
\DeclareMathOperator{\EED}{EED}
\DeclareMathOperator{\EEED}{EEED}
\DeclareMathOperator{\MD}{MD}
\DeclareMathOperator{\EMD}{EMD}
\DeclareMathOperator{\FreeAb}{FreeAb}
\DeclareMathOperator{\FreeMoore}{FreeMoore}
\DeclareMathOperator{\SPP}{SPP}
\DeclareMathOperator{\Aut}{Aut}
\DeclareMathOperator{\Ext}{Ext}
\DeclareMathOperator{\Hom}{Hom}
\DeclareMathOperator{\Tor}{Tor}
\DeclareMathOperator{\Sq}{Sq}
\DeclareMathOperator{\ev}{ev}
\newcommand{\Z}         {{\mathbb{Z}}}
\newcommand{\Q}         {{\mathbb{Q}}}
\newcommand{\R}         {{\mathbb{R}}}
\newcommand{\C}         {{\mathbb{C}}}
\renewcommand{\H}       {{\mathbb{H}}}
\newcommand{\al}        {\alpha}
\newcommand{\bt}        {\beta} 
\newcommand{\gm}        {\gamma}
\newcommand{\dl}        {\delta}
\newcommand{\kp}        {\kappa}
\newcommand{\zt}        {\zeta}
\newcommand{\tht}       {\theta}
\newcommand{\sg}        {\sigma}
\newcommand{\lm}        {\lambda}
\newcommand{\om}        {\omega}
\newcommand{\Dl}        {\Delta}
\newcommand{\Sg}        {\Sigma}
\newcommand{\CA}        {{\mathcal{A}}}
\newcommand{\CC}        {{\mathcal{C}}}
\newcommand{\CD}        {{\mathcal{D}}}
\newcommand{\CF}        {{\mathcal{F}}}
\newcommand{\CI}        {{\mathcal{I}}}
\newcommand{\CJ}        {{\mathcal{J}}}
\newcommand{\tH}        {\widetilde{H}}
\newcommand{\Smash}     {\wedge}
\newcommand{\bigWedge}  {\bigvee}
\newcommand{\opp}       {{\text{op}}}
\newcommand{\ot}        {\otimes}
\newcommand{\st}        {\;|\;}
\newcommand{\xra}       {\xrightarrow}
\newcommand{\xla}       {\xleftarrow}
\newcommand{\ov}[1]     {\overline{#1}}
\newcommand{\sse}       {\subseteq}
\newcommand{\Wedge}     {\vee}
\newcommand{\tm}        {\times}
\newcommand{\mar}       {\ar@{ >->}}
\newcommand{\ear}       {\ar@{->>}}
\renewcommand{\:}{\colon}
\newtheorem{theorem}{Theorem}[section]
\newtheorem{lemma}[theorem]{Lemma}
\newtheorem{proposition}[theorem]{Proposition}
\newtheorem{corollary}[theorem]{Corollary}
\theoremstyle{definition}
\newtheorem{remark}[theorem]{Remark}
\newtheorem{definition}[theorem]{Definition}
\newtheorem{example}[theorem]{Example}
\begin{document}

\maketitle 

\begin{abstract}
 We embed the category of Moore spectra as a full subcategory of an
 abelian category, and make some remarks about abelian embeddings of
 various other categories of spectra.
\end{abstract}

\section{Introduction}
 
Given any small additive category $\CI$, we write $[\CI,\Ab]$ for the
category of additive functors $\CI\to\Ab$.  We will call such
categories \emph{diagram categories}.

Let $\Spectra$ denote the homotopy category of spectra (and maps of
degree zero only), and let $\CC$ be a full subcategory of $\Spectra$
(which need not be triangulated).  By an \emph{embedding} of $\CC$ we
mean a full and faithful additive functor from $\CC$ to some diagram
category.

In principle, such embeddings should be common.  A theorem of
Freyd~\cite{fr:sh} guarantees that every triangulated category can be
embedded in an abelian category, and there are other theorems (the
most famous due to Gabriel and Popescu~\cite{gapo:cdc}) saying that
abelian categories can be embedded in diagram categories under fairly
mild conditions.  However, explicit examples are rare.  The best known
cases are as follows:
\begin{itemize}
 \item[(a)] Let $\CC_0$ be the category of rational spectra (those for 
  which each homotopy group $\pi_n(X)$ is uniquely divisible).  Let
  $\CI_0$ be the category with object set $\Z$ and
  \[ \CI_0(n,m) =
      \begin{cases}
       \Q & \text{ if } n=m \\
       0  & \text{ otherwise, }
      \end{cases}
  \]
  so $[\CI_0,\Ab]$ is the category of graded rational vector spaces.
  Then we have an equivalence $\pi_*\:\CC_0\to[\CI_0,\Ab]$.
 \item[(b)] Let $\CC_1$ be the category of Eilenberg-MacLane spectra
  (those with $\pi_k(X)=0$ for $k\neq 0$).  Then $\pi_0\:\CC_1\to\Ab$
  is an equivalence.
 \item[(c)] By a \emph{free Moore spectrum} we mean a $(-1)$-connected
  spectrum $X$ such that $\pi_0(X)$ is free abelian and $H_k(X)=0$ for
  $k>0$.  Let $\CC_2$ be the category of free Moore spectra.  One can
  check that any such spectrum is a wedge of copies of the sphere
  spectrum $S$, and that $\pi_0\:\CC_2\to\FreeAb$ is an equivalence.
 \item[(d)] Let $\CC_3$ be the category of finite spectra.  Let
  $\CI_3$ be the category with object set $\Z$ and
  $\CI_3(n,m)=\pi_{m-n}(S)$, so $[\CI_3,\Ab]$ is the category of
  graded modules over $\pi_*(S)$.  We have a functor
  $\pi_*\:\CC_3\to[\CI_3,\Ab]$, and an old conjecture of Freyd
  suggests that this should be an embedding.  However, there is no
  proof in sight.
 \item[(e)] Say that a spectrum $X$ is \emph{$MU$-injective} if it is
  a retract of $MU\Smash T$ for some $T$, and let $\CC_4$ be the
  category of $MU$-injective spectra.  Let $\CI_4$ be the category
  with object set $\Z$ and $\CI_4(n,m)=[\Sg^{-n}MU,\Sg^{-m}MU]$.
  Define $F\:\CC_4\to[\CI_4,\Ab]$ by
  $F(X)(n)=\pi_0(X\Smash\Sg^{-n}MU)=MU_n(X)$.  It is known that this
  is an embedding.
\end{itemize}

On the other hand, there are various known examples of subcategories
$\CC$ with functors $F\:\CC\to\CA$ (for some diagram category $\CA$)
where the maps $F\:[X,Y]\to\CA(FX,FY)$ are surjective and the kernel
has a straightforward description in terms of well-understood
invariants of $X$ and $Y$.  It is thus natural to ask what is needed
to improve such a functor to an embedding in some more complex diagram
category.  

In this note we will carry out this programme in the simplest possible
case, that of Moore spectra.  (We also have some comments about
slightly larger categories.)  The essentially equivalent case of Moore
spaces of fixed dimension $d>1$ has also been discussed by Baues
in~\cite{ba:ah}*{Section V.3a} (which we read before writing this
paper) and~\cite{ba:ht}*{Chapter 1} (which was brought to our
attention by a referee).  It seems likely that our main theorem could
be obtained by combining his results and spelling out various
definitions more explicitly.  However, we will instead give a direct
approach with more details and context, and greater focus on making
all constructions manifestly natural.  The amount of work involved was
a surprise to the author, at least.  We now describe the main result.

\begin{definition}\label{defn-moore}
 A \emph{Moore spectrum} is a spectrum $X$ such that $\pi_k(X)=0$ for
 $k<0$ and $H_k(X)=0$ for $k>0$.  We write $\Moore$ for the category of
 Moore spectra (a full subcategory of the homotopy category of
 spectra). 
\end{definition}

\begin{remark}\label{rem-notation}
 It is traditional and convenient to use notation $SA$ or $MA$ for a
 Moore spectrum $X$ with $\pi_0(X)\simeq A$, but this suggests that
 $SA$ is a functor of $A$, which is not quite correct.  As we are
 focussing here on questions of naturality we therefore prefer to
 avoid such notation.
\end{remark}

\begin{definition}\label{defn-moore-system}
 A \emph{Moore diagram} is a diagram $M$ of Abelian groups of the form
 \[ \xymatrix{
      A \ar@/^1ex/[r]^\phi & B \ar@/^1ex/[l]^\psi
    }
 \]
 such that $\psi\phi=0$ and $\phi\psi=2.1_B$.  We write $\MD$ for the
 category of Moore diagrams.  We define forgetful functors
 $\al,\bt\:\MD\to\Ab$ by $\al(M)=A$ and $\bt(M)=B$.

 Note that if $M=(A,B,\phi,\psi)$ is a Moore diagram then
 $2\phi=(\phi\psi)\phi=\phi(\psi\phi)=0$ and similarly $2\psi=0$.
 This means that we have induced maps
 \[ A/2 \xra{\phi} B \xra{\psi} A[2] \]
 (where $A[2]$ means $\{a\in A\st 2a=0\}$).  If this sequence is short
 exact, we say that $M$ is an \emph{exact Moore diagram}.  We write
 $\EMD$ for the full subcategory of exact Moore diagrams.
\end{definition}

We will prove that $\Moore$ is equivalent to $\EMD$.  To define the
relevant functor, let $S/2$ denote the cofibre of the degree two
self-map of the sphere spectrum $S$, so we have a cofibration sequence
\[ S \xra{2} S \xra{\rho} S/2 \xra{\bt} S^1 \xra{2} S^1. \]
Let $\eta$ be the Hopf map, which is the unique nontrivial element of
$\pi_1(S)$.  

\begin{theorem}\label{thm-moore-strong}
 There is an equivalence $F\:\Moore\to\EMD$ sending $X\in\Moore$
 to the diagram
 \[ \xymatrix{ 
     [S,X] \ar@/^1ex/[rr]^{(\eta\bt)^*} & &
     [S/2,X] \ar@/^1ex/[ll]^{\rho^*}
    }
 \]
\end{theorem}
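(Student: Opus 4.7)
My plan is to establish three things in turn: (i) $F$ lands in $\EMD$, (ii) $F$ is essentially surjective, and (iii) $F$ is fully faithful. Step (i) is mostly formal. The Moore-diagram relations $\psi\phi = 0$ and $\phi\psi = 2\cdot 1_B$ translate under contravariance to $\eta\bt\rho = 0$ (immediate since $\bt\rho = 0$) and $\rho\eta\bt = 2\cdot 1_{S/2}$ (a classical identity). For exactness, apply $[-,X]$ to the defining cofibration of $S/2$ to get the long exact sequence
\[ \pi_1(X) \xra{\bt^*} B \xra{\rho^*} A \xra{2} A, \]
which identifies $\psi = \rho^*$ as a surjection onto $A[2]$ with kernel $\image(\bt^*)$. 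To match this with the image of $\phi = \bt^*\eta^*$, I realise $X$ as the cofibre of a map between wedges of $S$ coming from a free resolution $F_1 \hookrightarrow F_0 \twoheadrightarrow A$; the $\pi_*$-LES then gives $\pi_1(X) \cong \pi_1(S)\otimes A = A/2$, with $\eta^*$ identified as the canonical quotient $A \twoheadrightarrow A/2$.

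For step (ii), given $M = (A, B, \phi, \psi) \in \EMD$, I take $X = C(\tilde d\colon \bigvee_{F_1} S \to \bigvee_{F_0} S)$, the Moore spectrum built from a free resolution of $A$, and then argue $F(X) \cong M$. The key observation is that exact Moore diagrams with first entry $A$ are sharply constrained by the relation $\phi\psi = 2\cdot 1_B$: for instance, for $A = \Z/4$ the condition rules out $B = \Z/4$ and forces $B = \Z/2 \oplus \Z/2$; for $A = \Z/2$ it rules out $B = \Z/2 \oplus \Z/2$ and forces $B = \Z/4$. An analysis across the cyclic decomposition of $A$ then shows that the isomorphism class of $(A, B, \phi, \psi)$ in $\EMD$ is determined by $A$ alone, and a direct computation of $F(X)$ from the resolution matches it.

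For step (iii), I present $X$ via the cofibre sequence from step (ii), apply $[-, Y]$ to get a short exact sequence describing $[X, Y]$ in terms of $\Hom$ and $\Ext$ groups involving $\pi_0 X$ and $\pi_0 Y$, and compare with a parallel sequence for $\Hom_{\EMD}(F(X), F(Y))$ coming from the analogous presentation of $F(X)$ in $\EMD$. Identifying $\MD$ with $[\CI^{\opp}, \Ab]$ (where $\CI$ is the full subcategory of $\Moore$ on $\{S, S/2\}$) makes $F$ a restricted Yoneda, trivially a bijection on morphisms between representables. Routine hom-computations ($[S,S] = \Z$, $[S,S/2] = [S/2,S] = \Z/2$, $[S/2,S/2] = \Z/4$) give the base case on wedges of $S$ and $S/2$, and a 5-lemma comparison extends the argument to all Moore spectra.

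I expect the main obstacle to be step (ii), specifically verifying that the spectrum-level computation of $[S/2, X]$ for $X$ built from a free resolution matches, under the Moore condition, the abelian-group-level extension class in $\Ext(A[2], A/2)$ that $M$ prescribes. Tracking this match for arbitrary $A$ via the SES $0 \to A/2 \to B \to A[2] \to 0$ is delicate bookkeeping involving $\Tor$ calculations and careful splittings; this is also where our argument parallels the quadratic-functor machinery of Baues referred to in the introduction.
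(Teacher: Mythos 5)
Your overall architecture (show $F$ lands in $\EMD$, essential surjectivity from free resolutions, full faithfulness by comparing exact sequences) matches the paper's, and steps (i) and (ii) are essentially right in outline --- though for (ii) the ``analysis across the cyclic decomposition of $A$'' does not apply to a general abelian group, which need not be a sum of cyclics. The correct statement, which the paper extracts from the isomorphism $\Phi\:\Ext(U,V)/2\to\Hom(U[2],V/2)$ of Proposition~\ref{prop-Phi}, is that the relation $\phi\psi=2\cdot 1_B$ together with exactness forces the class of the extension $A/2\to B\to A[2]$ to be the one corresponding to the map $A[2]\to A\to A/2$, and that $\Ext(A[2],A/2)$ has exponent $2$, so this class is determined on the nose; fullness of $\al\:\EMD\to\Ab$ then gives $F(X)\simeq M$. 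The serious gap is in step (iii). The functor $F$ does \emph{not} carry a presentation $P\to Q\to X\to\Sg P$ to a right-exact sequence in $\MD\simeq[\CI^\opp,\Ab]$: already for $\pi_0(X)=\Z/4$ the cokernel of $F(S)\xra{4}F(S)$ has $B$-component $\Z/2$, whereas $F(X)$ has $B\cong\Z/2\oplus\Z/2$ (as you yourself observe). The missing part of $[S/2,X]$ comes from $[S/2,\Sg P]$, i.e.\ from maps to $S^1$, which are invisible to the two-object category on $\{S,S/2\}$ --- this is exactly why the paper works with the three-object category $\CJ$ and $\EEED$. So there is no ``parallel sequence for $\Hom_{\EMD}(F(X),F(Y))$ coming from the analogous presentation of $F(X)$,'' and your five-lemma comparison has nothing to compare against; likewise, Moore spectra are not built from $S$ and $S/2$ by wedges and retracts, so the restricted-Yoneda base case does not bootstrap.

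What a correct argument must do instead (and what the paper does) is: show that $[X,Y]\to\Hom(\pi_0X,\pi_0Y)$ is surjective with kernel naturally isomorphic to $\Ext(\pi_0X,\pi_1Y)$ (computed from the presentation), show that $\EMD(FX,FY)\to\Hom(\pi_0X,\pi_0Y)$ is surjective with kernel $\Hom(\pi_0(X)[2],\pi_1(Y)/2)$, and then prove that $F$ restricted to the first kernel is exactly $\Phi$ followed by the inclusion of the second --- an isomorphism here because $2\pi_1(Y)=0$ kills $2\Ext(\pi_0X,\pi_1Y)$. That compatibility statement (Proposition~\ref{prop-om-xi}, proved by an octahedral-axiom diagram chase identifying the topological connecting construction $\om$ with the algebraic Snake-Lemma map) is the technical heart of the theorem and is entirely absent from your proposal. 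Proposition~\ref{prop-not-split} shows this step cannot be short-circuited by any natural splitting, so the ``delicate bookkeeping'' you defer is not bookkeeping but the main content.
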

\begin{remark}\label{rem-zt-moore} 
 Note here that $[S,X]$ is just another notation for $\pi_0(X)$.  It
 can also be shown that there is a map $\zt\:S^2\to S/2$ (of order $4$)
 such that $\bt\zt=\Sg\eta$, and that the map $\zt^*\:[S/2,X]\to\pi_2(X)$
 is an isomorphism for Moore spectra $X$.  (This will be proved as
 Lemma~\ref{lem-zt-moore}.) Thus, the diagram above is isomorphic to
 the diagram  
 \[ \xymatrix{ 
     \pi_0(X) \ar@/^1ex/[rr]^{(\eta^2)^*} & &
     \pi_2(X) \ar@/^1ex/[ll]^{\rho^*(\zt^*)^{-1}}
    }
 \]
\end{remark}

Theorem~\ref{thm-moore-strong} will be proved in
Section~\ref{sec-moore-spectra}. 

\section{An Ext isomorphism}
\label{sec-Phi}

It will be convenient to have the following result in hand before we
start our main discussion.

\begin{proposition}\label{prop-Phi}
 Let $U$ and $V$ be abelian groups.  Then for any
 extension $E=(V\xra{i}M\xra{p}U)$ there is a well-defined
 homomorphism $\Phi(E)\:U[2]\to V/2$ given by
 $\Phi(E)(u)=i^{-1}(2 p^{-1}(u))+2V$.  This depends only on the
 equivalence class of the extension $E$, and the resulting map
 $\Phi\:\Ext(U,V)/2\to\Hom(U[2],V/2)$ is an isomorphism.  Moreover,
 there is always an exact sequence
 \[ \xymatrix{
  V[2] \mar[r]^i &
  M[2] \ar[r]^p &
  U[2] \ar[r]^{\Phi(E)} &
  V/2 \ar[r]^i &
  M/2 \ear[r]^p &
  U/2.
 } \]
\end{proposition}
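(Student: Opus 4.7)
The plan is first to verify the basic properties of $\Phi(E)$, then to read off the six-term exact sequence from the Snake Lemma, and finally to identify $\Phi$ with a familiar natural map by choosing a free resolution of $U$.

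First I would check well-definedness. A lift $m \in p^{-1}(u)$ is unique up to $i(V)$, so $2m$ is unique up to $i(2V)$, and the class of $v = i^{-1}(2m)$ in $V/2$ is unambiguous; linearity in $u$ is immediate. A morphism $f\: M \to M'$ of extensions sends any lift of $u$ to a lift of $u$ and commutes with $i$, so $\Phi$ descends to equivalence classes. A short Baer-sum computation gives $\Phi(E_1+E_2) = \Phi(E_1)+\Phi(E_2)$. Since the target $\Hom(U[2], V/2)$ has exponent $2$, the resulting homomorphism $\Ext(U,V) \to \Hom(U[2], V/2)$ annihilates $2\Ext(U,V)$ and so factors through $\Ext(U,V)/2$.

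The six-term exact sequence is the Snake Lemma applied to the ladder
\[
\xymatrix{
0 \ar[r] & V \ar[r]^i \ar[d]^2 & M \ar[r]^p \ar[d]^2 & U \ar[r] \ar[d]^2 & 0 \\
0 \ar[r] & V \ar[r]^i & M \ar[r]^p & U \ar[r] & 0.
}
\]
The kernels and cokernels of the vertical maps are $V[2], M[2], U[2]$ and $V/2, M/2, U/2$ respectively, and the standard recipe for the connecting homomorphism is precisely the formula defining $\Phi(E)$.

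For the isomorphism, I would choose a free resolution $0 \to F_1 \xrightarrow{d} F_0 \to U \to 0$ and apply the same Snake Lemma to it; this produces the exact sequence $0 \to U[2] \xrightarrow{\sigma} F_1/2 \xrightarrow{\bar d} F_0/2 \to U/2 \to 0$. The standard cocycle description yields $\Ext(U,V)/2 \cong \Hom(F_1, V)/(d^*\Hom(F_0,V) + 2\Hom(F_1, V))$, where a cocycle $\phi\: F_1 \to V$ corresponds to the extension $M_\phi$ obtained as the pushout of $d$ along $\phi$. A direct computation shows that $\Phi(M_\phi)$ equals the composite $U[2] \xrightarrow{\sigma} F_1/2 \xrightarrow{\bar\phi} V/2$, so $\Phi$ is identified with the natural map $\Hom(F_1, V) \to \Hom(F_1/2, V/2) \xrightarrow{\sigma^*} \Hom(U[2], V/2)$.

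Surjectivity of this composite is straightforward: any homomorphism $U[2] \to V/2$ extends along the injection $\sigma$ to $F_1/2 \to V/2$ (every subspace of the $\Z/2$-vector space $F_1/2$ is a direct summand) and then lifts to $F_1 \to V$ by freeness of $F_1$. The main obstacle is injectivity. If $\bar\phi \circ \sigma = 0$, then $\bar\phi$ vanishes on $\ker(\bar d)$, hence factors through $\image(\bar d) \subseteq F_0/2$, and splitting the inclusion $\image(\bar d) \hookrightarrow F_0/2$ lets us extend to $\bar\psi\: F_0/2 \to V/2$. A lift $\psi\: F_0 \to V$ exists by freeness of $F_0$, and then $\phi - \psi d\: F_1 \to V$ takes values in $2V$; one final use of freeness of $F_1$ produces $\chi\: F_1 \to V$ with $2\chi = \phi - \psi d$, showing $[\phi] = 0$ in $\Ext(U,V)/2$. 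Keeping these successive splittings of $\Z/2$-vector spaces and integral liftings against multiplication by $2$ in the correct order is where I would expect the bookkeeping to be most delicate.
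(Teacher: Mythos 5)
Your argument is correct, and for the key claim --- that $\Phi\:\Ext(U,V)/2\to\Hom(U[2],V/2)$ is an isomorphism --- it takes a genuinely different route from the paper. The paper first proves the isomorphism in the special case where $U$ and $V$ both have exponent $2$ (injectivity by splitting the resulting extension of $\Z/2$-vector spaces, surjectivity by exhibiting a universal extension $U\to\widetilde{U}\to U$ with $\widetilde{U}$ free over $\Z/4$ and $\Phi=1_U$), and then bootstraps to general $U$ and $V$ via a $3\times 3$ diagram of right-exact sequences relating $\Ext(U,V)$, $\Ext(U[2],V)$, $\Ext(U,V/2)$ and $\Ext(U[2],V/2)$. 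You instead fix a free resolution $F_1\xra{d}F_0\to U$, identify $\Ext(U,V)/2$ with $\Hom(F_1,V)/(d^*\Hom(F_0,V)+2\Hom(F_1,V))$, and check that under the pushout description of extensions $\Phi$ becomes $\phi\mapsto\bar\phi\circ\sigma$, after which surjectivity and injectivity are elementary exercises with $\Z/2$-vector spaces and liftings against multiplication by $2$; your identification $\Phi(M_\phi)=\bar\phi\circ\sigma$ does check out against the snake-lemma recipe. Your approach is more computational and self-contained at the cochain level; the paper's approach avoids choosing a resolution and, as a by-product of running everything through the Snake Lemma and Baer sums, makes the naturality formula $\Phi(h^*f_*E)=f_*h^*\Phi(E)$ completely explicit, which is what gets used later in Proposition~\ref{prop-middle}. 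If you wanted that naturality from your setup you would have to add a short argument comparing resolutions, but it is not part of the statement being proved, so this is not a gap.
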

\begin{proof}
 First, the homomorphism $\Phi(E)$ arises by applying the Snake Lemma
 to the diagram 
 \[ \xymatrix{
     V \mar[r]^i \ar[d]_2 &
     M \ear[r]^p \ar[d]_2 &
     U \ar[d]_2 \\
     V \mar[r]_i &
     M \ear[r]_p &
     U.
    }
 \]
 This makes it clear that $\Phi$ is well-defined, that it gives a
 six-term exact sequence as claimed, and that it is
 natural in the following sense: for homomorphisms $f\:V\to V'$ and
 $h\:U'\to U$, the map $\Phi(h^*f_*E)$ is the composite
 \[ U'[2] \xra{h} U[2] \xra{\Phi(E)} V/2 \xra{f} V'/2. \]
 It is also clear that $\Phi(E\oplus E')=\Phi(E)\oplus\Phi(E')$.  Now
 suppose we have two parallel extensions $E=(V\to M\to U)$ and
 $E'=(V\to M'\to U)$.  In this context we have a diagonal map
 $\Dl\:U\to U\oplus U$ and a codiagonal map $\nabla\:V\oplus V\to V$,
 so we can form the extension $E''=\nabla_*\Dl^*(E\oplus E')$.  This
 operation is called the Baer sum, and it corresponds to addition in
 the group $\Ext(U,V)$.  In this case the rule for $\Phi(h^*f_*E)$
 just tells us that $\Phi(E'')=\Phi(E)+\Phi(E')$, so $\Phi$ gives a
 homomorphism $\Ext(U,V)\to\Hom(U[2],V/2)$.

 Now consider the case where $2U=0$ and $2V=0$, so $U[2]=U$ and
 $V/2=V$ so we have a map $\Phi\:\Ext(U,V)\to\Hom(U,V)$.  Suppose that
 $\Phi(E)=0$.  The Snake Lemma gives an exact sequence
 \[ V \xra{} M[2] \xra{p} U \xra{\Phi(E)}
     V \xra{i} M/2 \xra{} U,
 \]
 but $\Phi(E)=0$ so the map $p\:M[2]\to U$ is surjective.  This can be
 regarded as an epimorphism of vector spaces over the field $\Z/2$, so
 it has a section, and any such section gives a splitting of $E$, so
 $[E]=0$ in $\Ext(U,V)$.  Thus, the map $\Phi\:\Ext(U,V)\to\Hom(U,V)$
 is injective.

 Now choose a basis
 $\{u_\al\}_{\al\in I}$ for $U$ over $\Z/2$.  Let $\widetilde{U}$ be a
 free module over $\Z/4$ with basis $\{\tilde{u}_\al\}_{\al\in I}$.
 We can then define an extension
 $E_1=(U\xra{i_1}\widetilde{U}\xra{p_1}U)$, where
 $i_1(u_\al)=2\tilde{u}_\al$ and $p_1(\tilde{u}_\al)=u_\al$.  We then
 find that $\Phi(E_1)$ is the identity map $1_U$.  For any map
 $f\:U\to V$ we deduce that $\Phi(f_*E_1)=f_*\Phi(E_1)=f$.  This
 proves that $\Phi$ is surjective (and thus bijective) in the special
 case where $U$ and $V$ both have exponent two.

 We now return to the general case.  The standard six-term exact
 sequences for $\Ext$ tell us that functors of the form $\Ext(U,-)$
 preserve right-exact sequences, and contravariant functors of the
 form $\Ext(-,V)$ convert left-exact sequences to right exact
 sequences.  We can apply this to the sequences $U[2]\xra{}U\xra{2}U$
 and $V\xra{2}V\xra{}V/2$ to get a diagram with right-exact rows and
 columns as follows:
 \[ \xymatrix{
     \Ext(U,V) \ar[r]^2 \ar[d]_2 &
     \Ext(U,V) \ear[r] \ar[d]^2 &
     \Ext(U[2],V) \ar[d]^{2=0} \\
     \Ext(U,V) \ar[r]_2 \ear[d] &
     \Ext(U,V) \ear[r] \ear[d] &
     \Ext(U[2],V) \ear[d]^\simeq \\
     \Ext(U,V/2) \ar[r]_{2=0} & 
     \Ext(U,V/2) \ear[r]_\simeq &
     \Ext(U[2],V/2).
    }
 \]
 Right exactness means that all entries in the last row or column are
 isomorphic to $\Ext(U,V)/2$, as claimed.
\end{proof}

\begin{remark}\label{rem-exponent-two}
 Note that if $2U=0$ or $2V=0$ then (as $\Ext$ is a biadditive
 functor) we have $2.\Ext(U,V)=0$.  It follows that in these cases we
 just have $\Ext(U,V)=\Hom(U[2],V/2)$.
\end{remark}

\begin{proposition}\label{prop-middle}
 Suppose we have the solid part of the following diagram 
 \[ \xymatrix{ 
   V \mar[r]^i \ar[d]_f &
   M \ear[r]^p \ar@{.>}[d]_g & 
   U \ar[d]^h \\
   V' \mar[r]_{i'} &
   M' \ear[r]_{p'} &
   U'
 } \]
 Suppose that the rows are exact, so they define extensions $E$ and
 $E'$.  Consider the following conditions:
 \begin{itemize}
  \item[(a)] There is a map $g$ making the whole diagram commute.
  \item[(b)] $[f_*E]=[h^*E']$ in $\Ext(U,V')$
  \item[(c)] The diagram 
   \[ \xymatrix{
       U[2] \ar[r]^{\Phi(E)} \ar[d]_h &
       V/2 \ar[d]^f \\
       U'[2] \ar[r]_{\Phi(E')} &
       V'/2
      }
   \]
   commutes.
 \end{itemize}
 Then~(a) and~(b) are equivalent, and they imply~(c).  The converse
 also holds provided that $\Ext(U,V')$ has exponent $2$.
\end{proposition}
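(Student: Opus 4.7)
The plan is to handle the three implications in turn, exploiting the fact that the real work has already been done in Proposition~\ref{prop-Phi}. First, for (a)$\iff$(b) I would invoke the classical correspondence between morphisms of extensions and equivalences in $\Ext$. The pushout extension $f_*E$ and the pullback extension $h^*E'$ are both extensions of $U$ by $V'$. A choice of $g$ in (a) induces, via the universal properties, a morphism of extensions $f_*E\to h^*E'$ which is the identity on $V'$ and on $U$; by the short five lemma this is an isomorphism, giving (b). Conversely, any equivalence $f_*E\simeq h^*E'$ together with the canonical maps $M\to f_*M$ and $h^*M'\to M'$ yields a $g$ fitting in (a).

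Second, for (a)$\Rightarrow$(c) I would simply quote the naturality formula established in the proof of Proposition~\ref{prop-Phi}, namely $\Phi(h^*f_*E)=f\circ\Phi(E)\circ h$. Specialising $h=1_U$ gives $\Phi(f_*E)=f\circ\Phi(E)$, and specialising $f=1_{V'}$ gives $\Phi(h^*E')=\Phi(E')\circ h$, both regarded as maps $U[2]\to V'/2$. Since (a) implies (b), the classes $[f_*E]$ and $[h^*E']$ agree in $\Ext(U,V')$, so $\Phi$ sends them to the same element of $\Hom(U[2],V'/2)$, which is exactly the content of (c).

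Third, for the converse under the assumption that $\Ext(U,V')$ has exponent two: in that case $\Ext(U,V')=\Ext(U,V')/2$, and Proposition~\ref{prop-Phi} tells us that $\Phi\:\Ext(U,V')\to\Hom(U[2],V'/2)$ is an isomorphism. Condition (c) says precisely that $\Phi(f_*E)=\Phi(h^*E')$, so injectivity of $\Phi$ forces $[f_*E]=[h^*E']$, which is (b), and hence (a) by the first step.

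I expect the only mildly delicate point to be the bookkeeping in the (a)$\iff$(b) step: one must check that the $g$ built from the pushout and pullback universal properties really makes the original square commute (not merely a square with $f_*E$ or $h^*E'$ in the middle), and that the two natural constructions of the comparison map agree. All remaining verifications are formal consequences of results already in hand.
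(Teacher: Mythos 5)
Your proposal is correct and follows essentially the same route as the paper: the equivalence of (a) and (b) is the standard pushout/pullback argument (which the paper simply cites as ``standard and straightforward''), (b)$\Rightarrow$(c) is the naturality of $\Phi$ from Proposition~\ref{prop-Phi}, and the converse under the exponent-two hypothesis uses that $\Phi\:\Ext(U,V')\to\Hom(U[2],V'/2)$ is then an isomorphism. The only difference is that you spell out the (a)$\iff$(b) bookkeeping that the paper leaves implicit.
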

\begin{proof}
 The equivalence of~(a) and~(b) is standard and straightforward.
 Condition~(c) can be written as $f_*\Phi(E)=h^*\Phi(E')$ (in
 $\Hom(U[2],V'/2)$), so the naturality of $\Phi$ tells us that~(b)
 implies~(c).  If $\Ext(U,V')$ has exponent two then the map
 $\Phi\:\Ext(U,V')\to\Hom(U[2],V'/2)$ is an isomorphism so~(c)
 also implies~(b).
\end{proof}

\section{Some diagram categories}

\begin{definition}\ \\
 \begin{itemize}
  \item[(a)] An \emph{$\eta$-diagram} is just a diagram $A\xra{\eta}C$
   of abelian groups such that $2\eta=0$.  We write $\ED$ for the
   category of $\eta$-diagrams.  There are evident functors
   $\al,\gm\:\ED\to\Ab$ given by $\al(A,C,\eta)=A$ and
   $\gm(A,C,\eta)=C$.  
  \item[(b)] An \emph{extended $\eta$-diagram} (EED) is a diagram of the
   form 
   \[ B \xra{\psi} A \xra{\eta} C \xra{\chi} B \]
   such that $2\eta=0$ and $\psi\chi=0$ and $\chi\eta\psi=2.1_B$.  We
   write $\EED$ for the category of extended $\eta$-diagrams.  There
   are evident projection functors $\al,\bt,\gm\:\EED\to\Ab$ as well
   as a forgetful functor $\pi\:\EED\to\ED$.
 \end{itemize}
\end{definition}

\begin{lemma}\label{lem-EED-rels}
 Let $(A,B,C,\eta,\chi,\psi)$ be an extended $\eta$-diagram.  Then
 $2\psi=0$ and $2\chi=0$ and $4.1_B=0$.  Thus, $\psi$ and $\chi$
 induce maps
 \[ C/2 \xra{\chi} B \xra{\psi} A[2] \]
 whose composite is zero.
\end{lemma}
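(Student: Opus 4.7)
The plan is to derive all three vanishing identities purely formally from the two structural relations $\psi\chi = 0$ and $\chi\eta\psi = 2\cdot 1_B$, exploiting that each identity can be massaged into a composition that contains $\psi\chi$ as a factor.

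First I would handle $2\psi = 0$: rewrite $2\psi$ as $\psi \circ (2\cdot 1_B) = \psi\chi\eta\psi$, and observe that the leading $\psi\chi$ is zero. Symmetrically, $2\chi = (2\cdot 1_B)\circ\chi = \chi\eta\psi\chi$, and now the trailing $\psi\chi$ vanishes. For $4\cdot 1_B = 0$ I would then multiply: either use $4\cdot 1_B = 2\cdot(\chi\eta\psi) = (2\chi)\eta\psi = 0$, invoking the identity $2\chi = 0$ just proved, or equivalently expand $(2\cdot 1_B)^2 = \chi\eta(\psi\chi)\eta\psi = 0$ directly.

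Once $2\chi = 0$ is established, $\chi$ kills the subgroup $2C$, so it factors uniquely through $C/2$; once $2\psi = 0$, the image of $\psi$ lies in $A[2]$, so $\psi$ factors uniquely through $B \to A[2]$. The composite $C/2 \to B \to A[2]$, post-composed with the inclusion $A[2]\hookrightarrow A$, agrees with $\psi\chi$ on representatives, so its vanishing is just a restatement of the hypothesis $\psi\chi = 0$.

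There is no real obstacle here; the entire argument is a short symbol push once one notices that both $\psi$ and $\chi$ appear in the relation $\chi\eta\psi = 2\cdot 1_B$ in positions that allow the factor $\psi\chi$ to be produced by pre- or post-composition.
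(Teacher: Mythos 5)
Your argument is correct and coincides with the paper's own proof: both derive $2\psi=\psi\chi\eta\psi=0$, $2\chi=\chi\eta\psi\chi=0$, and $4\cdot 1_B=(2\cdot 1_B)^2=\chi\eta\psi\chi\eta\psi=0$ from the relations $\psi\chi=0$ and $\chi\eta\psi=2\cdot 1_B$, and then read off the factorizations through $C/2$ and into $A[2]$. Nothing further is needed.
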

\begin{proof}
 We have $2\psi=\psi\circ(2.1_B)=\psi\chi\eta\psi$, which is zero
 because $\psi\chi=0$.  Similarly we have
 $2\chi=(2.1_B)\chi=\chi\eta\psi\chi=0$ and
 $4.1_B=(2.1_B)^2=\chi\eta\psi\chi\eta\psi=0$.  It follows that
 $\chi(2C)=(2\chi)(C)=0$, so $\chi$ factors through $C/2$.  Similarly
 $2\psi(B)=0$, so $\psi(B)\leq A[2]$.  
\end{proof}

\begin{definition}\label{defn-EEED}
 An extended $\eta$-diagram is \emph{exact} if the sequence 
 \[ C \xra{2} C \xra{\chi} B \xra{\psi} A \xra{2} A \]
 is exact, or equivalently the associated sequence 
 \[ C/2 \xra{\chi} B \xra{\psi} A[2] \]
 is short exact.  We write $\EEED$ for the full subcategory of exact
 extended $\eta$-diagrams (EEEDs).
\end{definition}

\begin{example}\label{eg-four}
 Let $B$ be any free module over $\Z/4$.  We then have an EEED as
 follows:
 \[ \xymatrix{
  B \ear[r] & B/2 \ar[r]^1 & B/2 \mar[r]^2 & B.
 } \] 
\end{example}

\begin{lemma}\label{lem-EEED-Phi}
 Let $(A,B,C,\eta,\chi,\psi)$ be an EEED, and let
 $\ov{\eta}$ denote the composite 
 \[ \xymatrix{
     A[2] \mar[r] &
     A \ar[r]^{\eta} &
     C \ear[r] &
     C/2.
 } \]
 Then $\Phi$ of the extension $C/2\to B\to A[2]$ is $\ov{\eta}$, so 
 we have natural exact sequences
 \[ \xymatrix{
  C/2 \mar[r]^\chi &
  B[2] \ar[r]^\psi &
  A[2] \ar[r]^{\ov{\eta}} &
  C/2 \ar[r]^\chi &
  B/2 \ear[r]^\psi &
  A[2]
 } \]
\end{lemma}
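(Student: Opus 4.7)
The plan is to apply Proposition~\ref{prop-Phi} directly to the short exact sequence
\[ C/2 \xra{\chi} B \xra{\psi} A[2] \]
that is given by the exactness hypothesis on our EEED. This is an extension in the sense of Proposition~\ref{prop-Phi} with $V = C/2$, $M = B$, $U = A[2]$, $i = \chi$ and $p = \psi$, and the proposition produces a map $\Phi\:U[2]\to V/2$. The key point is to identify this map with $\ov{\eta}$, and then the six-term exact sequence of that proposition will deliver the claimed exact sequence once we observe that $V$ already has exponent two (so $V[2]=V/2=C/2$) and $U$ already has exponent two (so $U[2]=U/2=A[2]$).

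First I would compute $\Phi$ explicitly from its definition. Given $u\in A[2]$, choose any lift $m\in B$ with $\psi(m)=u$; then $\Phi(u)$ is the unique element $v\in C/2$ such that $\chi(v)=2m$. The defining relation $\chi\eta\psi=2.1_B$ of an EED gives immediately
\[ 2m = \chi\eta\psi(m) = \chi\eta(u), \]
so $v$ is just the class of $\eta(u)$ in $C/2$, i.e.\ $\Phi(u)=\ov{\eta}(u)$.

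Once this identification is in hand, the second claim is essentially a translation. Proposition~\ref{prop-Phi} produces the six-term exact sequence
\[ \xymatrix{
  V[2] \mar[r]^i & M[2] \ar[r]^p & U[2] \ar[r]^{\Phi} &
  V/2 \ar[r]^i & M/2 \ear[r]^p & U/2,
 } \]
and substituting $V=C/2$, $M=B$, $U=A[2]$, $i=\chi$, $p=\psi$, $\Phi=\ov{\eta}$ together with the simplifications $V[2]=V/2=C/2$ and $U[2]=U/2=A[2]$ yields exactly the stated sequence.

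There is no serious obstacle here: the only possible subtlety is keeping the bookkeeping of 2-torsion and mod-2 reductions consistent, since both $A[2]$ and $C/2$ have exponent two so that several of the groups in the six-term sequence coincide. Naturality in the EEED is inherited from the naturality of the six-term sequence in the extension, which was part of Proposition~\ref{prop-Phi}.
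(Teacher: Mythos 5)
Your proposal is correct and follows essentially the same route as the paper: both identify $\Phi$ of the extension $C/2\to B\to A[2]$ by lifting $u\in A[2]$ to $m\in B$ and using $2m=\chi\eta\psi(m)=\chi(\ov{\eta}(u))$, then invoke the six-term sequence of Proposition~\ref{prop-Phi}. The bookkeeping with $V[2]=V/2=C/2$ and $U[2]=U/2=A[2]$ is exactly what is needed and is handled correctly.
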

\begin{proof}
 Consider an element $a\in A[2]$.  Choose an element $b\in B$ with
 $\psi(b)=a$.  To determine $\phi(E)(a)$, we must find
 $\ov{c}\in C/2$ with $\chi(\ov{c})=2b$.  As $2.1_B=\chi\eta\psi$, we
 can take $\ov{c}=\eta\psi(b)=\eta(a)$.  This shows that
 $\phi(E)=\ov{\eta}$ as claimed, and the rest follows from
 Proposition~\ref{prop-Phi}. 
\end{proof}

\begin{definition}\label{defn-xi}
 Let $N=(A,B,C,\eta,\chi,\psi)$ and $N'=(A',B',C',\eta',\chi',\psi')$
 be extended $\eta$-diagrams.  We define
 $\xi\:\Hom(A[2],C'/2)\to\EEED(N,N')$ as follows.  For any map
 $u\:A[2]\to C'/2$ we let $\ov{u}$ denote the composite 
 \[ B  \xra{\psi} A[2] \xra{u} C'/2 \xra{\chi'} B', \]
 and note that $\psi'\ov{u}=0$ and $\ov{u}\chi=0$.  We then let
 $\xi(u)$ be the morphism $N\to N'$ given by the following diagram: 
 \[ \xymatrix{
     B \ar[r]^\psi \ar[d]_{\ov{u}} &
     C \ar[r]^\eta \ar[d]_0 &
     A \ar[r]^\chi \ar[d]^0 &
     B             \ar[d]^{\ov{u}} \\ 
     B' \ar[r]_{\psi'} &
     C' \ar[r]_{\eta'} &
     A' \ar[r]_{\chi'} &
     B'.
    }
 \]
\end{definition}

\begin{proposition}\label{prop-pi}
 The functor $\pi\:\EEED\to\ED$ is essentially surjective and reflects
 isomorphisms.  Moreover, for any $N,N'\in\EEED$ there is a natural
 short exact sequence
 \[ \xymatrix{
     \Hom(\al(N)[2],\gm(N')/2) \mar[r]^(0.6)\xi &
     \EEED(N,N') \ear[r]^(0.45)\pi &
     \ED(\pi(N),\pi(N')).
 } \]
 (In particular, the functor $\pi$ is full.)
\end{proposition}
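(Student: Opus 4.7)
The plan is to dispatch essential surjectivity and reflection of isomorphisms with a short construction and a 5-lemma, then build the short exact sequence; the main step will be fullness of $\pi$, which I will reduce to Proposition~\ref{prop-middle} and Lemma~\ref{lem-EEED-Phi}. For essential surjectivity, suppose $(A,C,\eta)\in\ED$. Both $A[2]$ and $C/2$ have exponent two, so by Remark~\ref{rem-exponent-two} so does $\Ext(A[2],C/2)$, and Proposition~\ref{prop-Phi} identifies that group with $\Hom(A[2],C/2)$ via $\Phi$. Let $\ov{\eta}\colon A[2]\to C/2$ be the composite $A[2]\hookrightarrow A\xra{\eta}C\twoheadrightarrow C/2$, choose an extension $E=(C/2\xra{j}B\xra{q}A[2])$ with $\Phi(E)=\ov{\eta}$, and set $\chi=j\circ(C\twoheadrightarrow C/2)$ and $\psi=(A[2]\hookrightarrow A)\circ q$. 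Then $\psi\chi=0$ is automatic from exactness, and for $b\in B$ the snake-lemma definition of $\Phi(E)(q(b))$ is precisely $j^{-1}(2b)\in C/2$, so $\chi\eta\psi(b)=j(\ov{\eta}(q(b)))=2b$, giving $\chi\eta\psi=2\cdot 1_B$ as required.

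For reflection of isomorphisms, note that because $\chi$ factors through $C/2$ and $\psi$ through $A[2]$ (Lemma~\ref{lem-EED-rels}), every EEED morphism restricts to a morphism of short exact sequences $C/2\to B\to A[2]$. If $f_A$ and $f_C$ are isomorphisms then so are $f_A|_{A[2]}$ and the induced $f_C/2\colon C/2\to C'/2$, and the 5-lemma forces $f_B$ to be an isomorphism. For the short exact sequence of hom-sets I would first verify that $\xi(u)$ is a valid EEED morphism using $\psi\chi=0$ and $\psi'\chi'=0$, and that $\pi\xi=0$ is evident from the definition. Injectivity of $\xi$ uses that $\psi$ surjects onto $A[2]$ (by exactness of $N$) and that $\chi'\colon C'/2\to B'$ is injective (by exactness of $N'$). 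For exactness at $\EEED(N,N')$: if $\pi(f)=0$ then $f_B\chi=\chi'f_C=0$ lets $f_B$ factor through $B/\image(\chi)\cong A[2]$, and then $\psi'f_B=f_A\psi=0$ forces the factor to land in $\ker(\psi')=\image(\chi')\cong C'/2$, producing the required $u\colon A[2]\to C'/2$ with $\xi(u)=f$.

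The main step is fullness of $\pi$. Given $(f_A,f_C)\colon\pi(N)\to\pi(N')$ in $\ED$, I want to produce $f_B\colon B\to B'$ compatible with the pairs $(\chi,\chi')$ and $(\psi,\psi')$; this is exactly the lifting problem of Proposition~\ref{prop-middle} applied to the extensions $C/2\to B\to A[2]$ and $C'/2\to B'\to A'[2]$ with outer data $f_C/2$ and $f_A|_{A[2]}$. Since $\Ext(A[2],C'/2)$ has exponent two (Remark~\ref{rem-exponent-two}), Proposition~\ref{prop-middle} reduces existence of the lift to its condition (c), which by Lemma~\ref{lem-EEED-Phi} reads $(f_C/2)\circ\ov{\eta}=\ov{\eta'}\circ f_A|_{A[2]}$. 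This is just the defining relation $f_C\eta=\eta'f_A$ of a morphism in $\ED$, restricted to $A[2]$ and reduced mod $2$, so the lift exists. Naturality of the whole short exact sequence in $N$ and $N'$ is immediate from the explicit form of $\xi$ and the functoriality of $\Hom$, and the indeterminacy in the lift $f_B$ is precisely the image of $\xi$, which closes exactness at $\EEED(N,N')$.
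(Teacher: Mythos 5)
Your proposal is correct and follows essentially the same route as the paper: essential surjectivity via Proposition~\ref{prop-Phi} applied to $\ov{\eta}$, reflection of isomorphisms via the five-lemma on the induced map of short exact sequences, left exactness by factoring $f_B$ through $\cok(\chi)\cong A[2]$ and $\ker(\psi')\cong C'/2$, and fullness via Proposition~\ref{prop-middle} combined with Lemma~\ref{lem-EEED-Phi}. Your explicit invocation of Remark~\ref{rem-exponent-two} to justify the exponent-two hypothesis in Proposition~\ref{prop-middle} is a point the paper leaves implicit, but otherwise the arguments coincide.
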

\begin{proof}
 First consider an $\eta$-diagram $P=(A\xra{\eta}C)$.  Let $\ov{\eta}$
 denote the composite 
 \[ A[2] \xra{} A \xra{\eta} C \xra{} C/2. \]
 By Proposition~\ref{prop-Phi} we can choose an extension
 $E=(C/2\xra{\ov{\chi}}B\xra{\ov{\psi}}A[2])$ with
 $\Phi(E)=\ov{\eta}$.  We define $\chi$ to be the composite
 $C\xra{}C/2\xra{\ov{\chi}}B$, and $\psi$ to be the composite
 $B\xra{\ov{\psi}}A[2]\to A$.  It is then clear that $\psi\chi=0$.
 From the construction of $\Phi$ we see that
 $2.1_B=\ov{\chi}\ov{\eta}\ov{\psi}=\chi\eta\psi$.  We thus have an
 EEED $N=(A,B,C,\eta,\chi,\psi)$ with $\pi(N)=P$, showing that $\pi$
 is essentially surjective.

 Next, suppose we have another EEED
 $N'=(A',B',C',\eta',\chi',\psi')$.  A morphism from $N$ to $N'$ is
 then a commutative diagram as shown on the left below, which
 automatically gives rise to a another diagram as shown on the right: 
 \[ \xymatrix{
      B \ar[r]^\psi \ar[d]_g     &
      A \ar[r]^\eta \ar[d]_f     &
      C \ar[r]^\chi \ar[d]^h     &
      B             \ar[d]^g     & & 
      C/2 \mar[r]^\chi \ar[d]_h  &
      B   \ear[r]^\psi \ar[d]^g  &
      A[2]             \ar[d]^f  \\
      B' \ar[r]_{\psi'}          &
      A' \ar[r]_{\eta'}          &
      C' \ar[r]_{\chi'}          &
      B'                         & & 
      C'/2 \mar[r]_{\chi'}       &
      B' \ear[r]_{\psi'}         &
      A'[2]
    }
 \]
 If $f$ and $h$ are isomorphisms, then so are the induced maps
 $A[2]\to A'[2]$ and $C/2\to C'/2$, so $g$ is also an isomorphism by
 the five-lemma.  This shows that $\pi$ reflects isomorphisms.
 Suppose instead that $f=0$ and $h=0$.  We then have $g\chi=\chi'h=0$
 and $\psi'g=f\psi=0$, so $g$ factors through the cokernel of $\chi$
 and the kernel of $\psi'$.  In other words, there is a unique map
 $\ov{g}\:A[2]\to C'/2$ such that 
 \[ g = (B \xra{\psi} A[2] \xra{\ov{g}} C'/2 \xra{\chi'} B'), \]
 so $(f,g,h)=\xi(\ov{g})$.  This shows that our sequence is left
 exact. 

 Finally, suppose we are given $f$ and $h$ making the middle square on
 the left commute, but not $g$.  Let $E$ and $E'$ denote the
 extensions $C/2\to B\to A[2]$ and $C'/2\to B'\to A'[2]$, so
 $\Phi(E)=\ov{\eta}$ and $\Phi(E')=\ov{\eta}'$ by
 Lemma~\ref{lem-EEED-Phi}.  The relation $h\eta=\eta'f$ implies that
 $h_*\Phi(E)=f^*\Phi(E')$, so Proposition~\ref{prop-middle} tells us
 that there exists $g\:B\to B'$ making the right hand diagram
 commute.  It follows that the remaining squares on the left commute
 also, so we have a morphism $(f,g,h)\in\EEED(N,N')$ with
 $\pi(f,g,h)=(f,h)$ as required.
\end{proof}

The following result proves is evidence that the work done in this
paper is really necessary, and cannot be simplified away.

\begin{proposition}\label{prop-not-split}
 There is no functor $\sg\:\ED\to\EEED$ (additive or otherwise) such
 that the composite $\pi\sg\:\ED\to\ED$ is naturally isomorphic to the
 identity functor.
\end{proposition}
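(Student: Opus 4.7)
The plan is to assume for contradiction that such a $\sigma$ exists, replace it by an isomorphic functor so that $\pi\sigma = 1_\ED$ on the nose, and then find a single object $P$ of $\ED$ where the multiplicative structure on its endomorphism monoid, transported through $\sigma$, produces an inconsistent lift.

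I would take $P_0 = (\Z/2,\Z/2,1)$ and $P = P_0 \oplus P_0$, so that $\ED(P,P) \cong M_2(\Z/2)$ as a ring. Using Lemma~\ref{lem-EEED-Phi} and Proposition~\ref{prop-Phi}, the middle group $B$ of $\sigma(P)$ must be $(\Z/4)^2$; a direct check then identifies $\EEED(\sigma P,\sigma P) \cong M_2(\Z/4)$ via the action on $B$, with $\pi$ corresponding to reduction mod $2$. Hence $\sigma$ restricts to a multiplicative section $s\colon M_2(\Z/2) \to M_2(\Z/4)$, with $s(0)=0$ and $s(1)=1$ forced by the fact that both $0$ and $1$ admit a unique idempotent lift in $M_2(\Z/4)$. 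The additive case is ruled out immediately: a ring section would embed $M_2(\Z/2)$ as a subring of $M_2(\Z/4)$, but every element of $M_2(\Z/2)$ has additive order $2$, forcing the image into the square-zero ideal $2M_2(\Z/4)$---incompatible with $s(1)^2 = 1 \neq 0$.

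To exclude non-additive $s$, I would work with the six specific elements $e = E_{11}$, $f = E_{22}$, $N = E_{12}$, $N^t = E_{21}$, $T = e+N$, $M = e+N^t$ of $M_2(\Z/2)$, using only the purely multiplicative relations $e^2=e$, $f^2=f$, $ef=fe=0$, $N^2=(N^t)^2=0$, $NN^t=e$, $N^tN=f$, $T^2=T$, $M^2=M$, $Tf=N$, $fM=N^t$, $TM=0$, together with the obvious one-sided absorption relations among these generators. The idempotent lifts of $e$ form a two-parameter family over $\Z/2$; the relations $ef=fe=0$ will force $s(f)$ to share those parameters $(\al,\bt)$; the relations $NN^t=e$, $N^tN=f$ will then pin down $s(N)$ and $s(N^t)$ up to one further common parameter $\epsilon$; and $Tf=N$, $fM=N^t$ will fix the remaining freedom in $s(T)$, $s(M)$ in terms of $(\al,\bt,\epsilon)$.

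The hard part will simply be the bookkeeping through these multiplicative constraints, since each generator has non-unique lifts. Once complete, a direct matrix product yields $s(T)\,s(M) = \begin{pmatrix} 2 & 0 \\ 0 & 0 \end{pmatrix}$ instead of $0$, contradicting $s(TM) = s(0) = 0$. The conceptual source is the expansion $TM = (e+N)(e+N^t) = e+eN^t+Ne+NN^t = 2e$: this sum vanishes in $M_2(\Z/2)$ but equals a nonzero multiple of $s(e)$ in $M_2(\Z/4)$, and no purely multiplicative $s$ can absorb the resulting ``hidden $2$''.
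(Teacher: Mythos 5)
Your argument is correct, and it takes a genuinely different route from the paper's. The paper performs the same normalization (replacing $\sg$ so that $\pi\sg=1$ on the nose), but then passes to the functor $F(U)=\bt\sg(U\xra{1}U)$ on all of $\ElAb$ and proves a general splitting lemma: any natural short exact sequence $U\to F(U)\to U$ of (not necessarily additive) functors on $\ElAb$ splits naturally, via an averaging argument (summing a natural set-theoretic section over all $u\in U$ and using $U^{\Aut(U)}=0$ for $|U|>2$); this forces $2\cdot 1_{F(U)}=\chi\psi=0$, contradicting injectivity of $\chi$ and surjectivity of $\psi$. You instead confine everything to the endomorphism monoid of the single object $P=(\Z/2\xra{1}\Z/2)^{\oplus 2}$ and reduce to the purely algebraic fact that the surjection of multiplicative monoids $M_2(\Z/4)\to M_2(\Z/2)$ admits no multiplicative section; your identification $\EEED(\sg P,\sg P)\simeq M_2(\Z/4)$ with $\pi$ as reduction mod $2$ is correct, since exactness together with Lemma~\ref{lem-EEED-Phi} forces the middle group to be the extension $(\Z/4)^2$ of Example~\ref{eg-four}. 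I checked that your deferred bookkeeping does close: the general idempotent lifts are $s(T)=T+2\left(\begin{smallmatrix}a&b\\a&a\end{smallmatrix}\right)$ and $s(M)=M+2\left(\begin{smallmatrix}p&p\\r&p\end{smallmatrix}\right)$, giving $s(T)s(M)=\left(\begin{smallmatrix}2+2(a+b+p+r)&0\\0&0\end{smallmatrix}\right)$; the relations $Te=e$ and $eM=e$ give $a=\bt$ and $p=\al$ (where $s(e)=\left(\begin{smallmatrix}1&2\al\\2\bt&0\end{smallmatrix}\right)$, and $ef=fe=0$ gives $s(f)=\left(\begin{smallmatrix}0&2\al\\2\bt&1\end{smallmatrix}\right)$ as you predicted), and then $NN^t=e$ applied to $s(N)=s(T)s(f)$ and $s(N^t)=s(f)s(M)$ forces $b+r=\al+\bt$, whence $a+b+p+r=0$ and $s(T)s(M)=\left(\begin{smallmatrix}2&0\\0&0\end{smallmatrix}\right)\neq 0=s(TM)$, exactly the ``hidden $2$'' you describe. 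What each approach buys: yours localizes the obstruction to a single object and isolates it as a concrete, self-contained statement about matrix monoids, needing no naturality between distinct objects; the paper's is more structural, showing that every natural self-extension of the identity on $\ElAb$ splits, which also yields the accompanying remark that extensions of additive functors are automatically additive.
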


It will be convenient to give the core of the argument as a separate
lemma. 
\begin{lemma}\label{lem-not-split}
 Let $\ElAb$ denote the category of elementary abelian $2$-groups, and
 suppose we have a (not necessarily additive) functor $F\:\ElAb\to\Ab$
 with a natural short exact sequence $U\xra{\chi}F(U)\xra{\psi}U$.
 Then the sequence is naturally split, so $2F(U)=0$.
\end{lemma}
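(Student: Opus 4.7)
The plan is to combine Proposition~\ref{prop-Phi} with a rigidity argument for natural transformations. For $U \in \ElAb$ we have $U[2] = U$ and $U/2 = U$, so the natural extension $E_U = (U \xra{\chi} F(U) \xra{\psi} U)$ defines an element $\Phi(E_U) \in \Hom(U, U)$. Naturality of $F$ turns each $f \: U \to V$ into a morphism $E_U \to E_V$ over $(f, f)$, and naturality of $\Phi$ (Proposition~\ref{prop-Phi}) then shows that the assignment $U \mapsto \Phi(E_U)$ is a natural endomorphism of the identity functor on $\ElAb$. I will first observe that the only such natural endomorphisms are $0$ and the identity, by evaluating on $\mathbb{F}_2$ and using the maps $f_u \: \mathbb{F}_2 \to U$, $f_u(1) = u$, to propagate the value to arbitrary $U$.

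The next step is to rule out $\Phi(E_U) = 1_U$, which would force $F(\mathbb{F}_2) \cong \mathbb{Z}/4$. Consider $\mathbb{F}_2^2$ equipped with the coordinate inclusions $i_1, i_2 \: \mathbb{F}_2 \to \mathbb{F}_2^2$, the map $\Sigma \: \mathbb{F}_2 \to \mathbb{F}_2^2$ sending $1 \mapsto (1, 1)$, the swap $\tau$, and the summation $p \: \mathbb{F}_2^2 \to \mathbb{F}_2$. Set $y = F(i_1)(1) + F(i_2)(1) - F(\Sigma)(1) \in F(\mathbb{F}_2^2)$. Since $\psi(y) = 0$, we have $y = \chi(c)$ for a unique $c = (c_1, c_2) \in \mathbb{F}_2^2$. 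Because every $F(\text{morphism})$ is an abelian group homomorphism, applying $F(\tau)$ (using $\tau i_1 = i_2$, $\tau i_2 = i_1$, $\tau \Sigma = \Sigma$) gives $F(\tau)(y) = y$, hence $\chi(c_1, c_2) = \chi(c_2, c_1)$ and so $c_1 = c_2$. Applying $F(p)$ (with $p i_1 = p i_2 = 1_{\mathbb{F}_2}$ and $p\Sigma = 0$) gives $F(p)(y) = 1 + 1 - 0 = 2$ in $\mathbb{Z}/4$ on the one hand, and $F(p) \chi(c) = \chi(c_1 + c_2) = \chi(0) = 0$ on the other, a contradiction.

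Thus $\Phi(E_U) = 0$ for every $U$ and each $E_U$ splits; the remaining and main difficulty is to produce a splitting \emph{naturally}. The proposal is to pick $x_0 \in F(\mathbb{F}_2)$ with $\psi(x_0) = 1$ and define $s_U(u) = F(f_u)(x_0)$. The section identity $\psi_U s_U = 1_U$ and naturality in $U$ are immediate, and $s_U(0) = 0$ because a short calculation shows that $F$ kills zero morphisms. It remains to show $s_U$ is additive, and by factoring an arbitrary sum $u + v$ through the map $\mathbb{F}_2^2 \to U$, $(a, b) \mapsto au + bv$, this reduces to the single identity $F(\Sigma)(x_0) = F(i_1)(x_0) + F(i_2)(x_0)$ in $F(\mathbb{F}_2^2)$. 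Writing the difference as $\chi(c)$, naturality with $\tau$ again forces $c_1 = c_2$; now naturality with the first-coordinate projection $r \: \mathbb{F}_2^2 \to \mathbb{F}_2$ (satisfying $r i_1 = r \Sigma = 1_{\mathbb{F}_2}$ and $r i_2 = 0$) gives $F(r)$ of the difference equal to $x_0 + 0 - x_0 = 0$, while $F(r)\chi(c) = \chi(c_1)$. Injectivity of $\chi$ then forces $c_1 = 0$, hence $c = 0$. This produces the natural splitting, making $F(U) \cong U \oplus U$ elementary abelian, and so $2 F(U) = 0$.
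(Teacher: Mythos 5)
Your proof is correct, and its skeleton is the same as the paper's: both choose $x_0\in F(\Z/2)$ with $\psi(x_0)=1$, define the candidate section by $u\mapsto F(f_u)(x_0)$ (your $s_U$, the paper's $\tht$), note that $\psi s_U=1$, naturality, and $s_U(0)=0$ are immediate, and reduce additivity to one universal identity in $F((\Z/2)^2)$. Where you genuinely differ is in how that identity is established. The paper forms the $\Aut(U)$-invariant element $\om_U=\sum_{u\in U}\tht(u)$ and uses $U^{\Aut(U)}=0$ for $|U|>2$ twice (once for $\psi(\om_U)$, once for its preimage under $\chi$) to conclude $\tht(e_1)+\tht(e_2)+\tht(e_1+e_2)=0$, and then still needs the extra specialization $u=v$ to get $2\tht(u)=0$ before additivity follows. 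You instead write the defect $d=F(i_1)(x_0)+F(i_2)(x_0)-F(\Sigma)(x_0)$ as $\chi(c)$ (using $\psi(d)=0$, exactness, and injectivity of $\chi$) and kill the coordinates of $c$ by naturality against the swap and the coordinate projection; this gives additivity in one stroke, using the exact-sequence structure throughout rather than only at the end, whereas the paper's version is an averaging argument. Both are valid, and yours is arguably more direct. Two small remarks: your first two paragraphs (identifying $U\mapsto\Phi(E_U)$ as a natural scalar and ruling out the value $1$ via the $\Z/4$ computation) are logically redundant, since your third paragraph constructs the natural splitting without ever using that $E_U$ splits pointwise, and once the natural splitting exists $\Phi(E_U)=0$ follows anyway; and in that third paragraph the swap $\tau$ is also dispensable, since the second coordinate projection kills $c_2$ just as the first kills $c_1$.
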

\begin{proof}
 First, the exact sequence shows that $F(0)=0$.  Next, we can choose
 $x\in F(\Z/2)$ with $\psi(x)=1$.  For an arbitrary group $U\in\ElAb$ and 
 $u\in U$ we have $\al_u\:\Z/2\to U$ with $\al_u(1)=u$, and we put
 $\tht(u)=F(\al_u)(x)\in F(U)$.  This defines a natural function (not
 obviously a homomorphism) $\tht\:U\to F(U)$ with $\psi\tht=1$.  Using
 $F(0)=0$ we see that $\tht(0)=0$ for all $U$.  Now consider the element
 $\om_U=\sum_{u\in U}\tht(u)\in F(U)$, which is evidently invariant
 under $\Aut(U)$, so $\psi(\om_U)\in U^{\Aut(U)}$.  Now suppose that
 $|U|>2$, in which case it is easy to see that $U^{\Aut(U)}=0$.  We
 thus have $\psi(\om_U)=0$, so $\om_U=\chi(\om'_U)$ for a unique
 element $\om'_U\in U$.  This is again invariant under $\Aut(U)$ and
 so is zero, which implies that $\om_U=0$.  Now specialise to the case
 $U=(\Z/2)^2=\{0,e_1,e_2,e_1+e_2\}$; the conclusion is that
 $\tht(e_1)+\tht(e_2)+\tht(e_1+e_2)=0$.  Now define
 $\al_{u,v}\:(\Z/2)^2\to U$ by $\al_{u,v}(i,j)=iu+jv$.  By applying
 naturality to this we get $\tht(u)+\tht(v)+\tht(u+v)=0$.
 We can specialise to the case $u=v$ to get $2\tht(u)=0$, and then
 return to the general case to get $\tht(u+v)=\tht(u)+\tht(v)$.  Thus,
 $\tht$ is a natural homomorphism that splits the exact sequence.
\end{proof}
\begin{remark}
 In the statement of the above lemma we have emphasised that $F$ is
 not assumed to be additive, because that is necessary for our
 application.  From the conclusion of the lemma we see that $F$ is
 actually forced to be additive.  More generally, as the referee
 remarked, one can check that any extension of additive functors is
 automatically additive.
\end{remark}

\begin{proof}[Proof of Proposition~\ref{prop-not-split}]
 Suppose we have a functor $\sg\:\ED\to\EEED$, and a natural
 isomorphism $\lm\:1\to\pi\sg$.  Thus, for each object
 $L=(U\xra{\zt}V)\in\ED$ we have an object
 $\sg(L)=(B\xra{\psi}A\xra{\eta}C\xra{\chi}B)\in\EEED$ together with a
 commutative diagram
 \[ \xymatrix{
      U \ar[r]^\zt \ar[d]_{\lm_1}^\simeq & V \ar[d]_\simeq^{\lm_2} \\
      A \ar[r]_\eta & C.
    }
 \]
 We can define a new functor $\sg'\:\ED\to\EEED$ by 
 \[ \sg'(L) = 
     (B\xra{\lm_1^{-1}\psi}U\xra{\zt}V\xra{\chi\lm_2}B)
 \]
 and we find that $\sg'\simeq\sg$ and $\pi\sg'$ is equal (not just
 isomorphic) to the identity.  We will replace $\sg$ by $\sg'$ and
 assume that $\pi\sg=1$.  We now have a functor $F\:\ElAb\to\Ab$ given
 by $F(U)=\bt(\sg(U\xra{1}U))$, and a natural short exact
 sequence $U\xra{\chi}F(U)\xra{\psi}U$ with $2.1_{F(U)}=\chi\psi$.  By
 the Lemma, this must split, so $2.1_{F(U)}=0$, so $\chi\psi=0$.  On
 the other hand, the sequence is short exact, so $\chi$ is injective
 and $\psi$ is surjective.  This clearly gives a contradiction
 whenever $U\neq 0$.  
\end{proof}

We now discuss a slightly different way to think about the category $\EED$.
\begin{definition}\label{defn-CJ}
 We define a small additive category $\CJ$ as follows.  There are
 three objects, denoted by $a$, $b$ and $c$.  There are morphisms 
 \[ b \xla{\rho} a \xla{\eta} c \xla{\bt} b \]
 with $2\rho=0$, $2\eta=0$ and $2\bt=0$.  The full list of morphisms
 is as follows:
 \begin{align*}
  \CJ(a,a) &= \Z & \CJ(a,b) &= (\Z/2)\rho & \CJ(a,c) &= 0 \\
  \CJ(b,a) &= (\Z/2)\eta\bt & \CJ(b,b) &= \Z/4 & \CJ(b,c) &= (\Z/2)\bt \\
  \CJ(c,a) &= (\Z/2)\eta & \CJ(c,b) &= (\Z/2)\rho\eta & \CJ(c,c) &= \Z.
 \end{align*}
 The composition is determined by the rules $\bt\rho=0$ and
 $2.1_b=\rho\eta\bt$. 
\end{definition}

\begin{proposition}\label{prop-CJ-Ab}
 Any EED $N$ gives an additive functor $Q(N)\:\CJ^\opp\to\Ab$ by the rules
 \begin{align*}
  Q(N)(a) &= A & Q(N)(b) &= B & Q(N)(c) &= C \\
  Q(N)(\bt) &= \chi & Q(N)(\eta) &= \eta & Q(N)(\rho) &= \psi,
 \end{align*}
 so 
 \[ Q(N)(b \xla{\rho} a \xla{\eta} c \xla{\bt} b) = 
     (B\xra{\psi} A \xra{\eta} C \xra{\chi} B).
 \]
 Moreover, this construction gives an equivalence from $\EED$ to the
 category $[\CJ^\opp,\Ab]$ of additive contravariant functors from
 $\CJ$ to $\Ab$.
\end{proposition}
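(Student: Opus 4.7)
The plan is to unpack the definitions and observe that the axioms of an EED are precisely the relations that $\CJ$ imposes on an additive contravariant functor. I would proceed in three short steps.

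First, I would verify that $Q(N)$ is well-defined. Since $Q$ is contravariant, each relation in $\CJ$ must be sent to the opposite composite of the corresponding images. The relations $2\rho=2\eta=2\bt=0$ translate to $2\psi=2\eta=2\chi=0$, which hold by the definition of an EED and by Lemma~\ref{lem-EED-rels}. The relation $\bt\rho=0$ in $\CJ$ (a morphism $a\to b\to c$) is sent under $Q$ to $Q(\rho)Q(\bt)=\psi\chi$, which is zero by the axioms of an EED. Similarly, $2.1_b=\rho\eta\bt$ is sent to $2.1_B=\chi\eta\psi$, again an EED axiom. Additivity of $Q(N)$ on each hom group is automatic because all the hom groups in $\CJ$ are cyclic and generated by elements we have just accounted for.

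Second, I would check essential surjectivity by constructing an explicit inverse on objects. Given an additive functor $F\:\CJ^\opp\to\Ab$, set $A=F(a)$, $B=F(b)$, $C=F(c)$, $\eta=F(\eta)$, $\chi=F(\bt)$, $\psi=F(\rho)$. Running the previous paragraph in reverse, the relations in $\CJ$ translate into the EED axioms $2\eta=0$, $\psi\chi=0$ and $\chi\eta\psi=2.1_B$, so the tuple $(A,B,C,\eta,\chi,\psi)$ is an EED $N$ with $Q(N)=F$.

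Third, I would note that $Q$ is fully faithful essentially by definition. A morphism $(f,g,h)\:N\to N'$ in $\EED$ is a triple of abelian group homomorphisms making three squares (one for each of $\psi,\eta,\chi$) commute. A natural transformation $\tau\:Q(N)\to Q(N')$ is given by its components $\tau_a,\tau_b,\tau_c$, and the naturality squares for the three generating morphisms $\rho,\eta,\bt$ of $\CJ$ are exactly those three squares; naturality with respect to the remaining morphisms of $\CJ$ (identities, integer multiples, and the composites $\eta\bt$, $\rho\eta$) follows automatically. Thus the assignment $(f,g,h)\mapsto(\tau_a,\tau_b,\tau_c)=(f,h,g)$ is a bijection $\EED(N,N')\to[\CJ^\opp,\Ab](Q(N),Q(N'))$.

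There is no real obstacle; the statement is a pure translation and the only thing one must be careful about is the contravariance, which reverses the order of composites and matches $\rho,\eta,\bt$ to $\psi,\eta,\chi$ respectively.
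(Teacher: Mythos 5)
Your proposal is correct and is exactly the paper's (unwritten) argument: the paper's proof is simply ``straightforward comparison of definitions,'' and you have carried out that comparison in full, correctly handling the contravariance and noting that additivity on the cyclic hom groups (in particular $\CJ(b,b)=\Z/4$, which needs $4.1_B=0$ from Lemma~\ref{lem-EED-rels}) comes for free. The only blemish is the final tuple $(\tau_a,\tau_b,\tau_c)=(f,h,g)$, which mislabels which component acts on $B$ versus $C$ relative to your own earlier convention; this is a typo, not a gap.
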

\begin{proof}
 Straightforward comparison of definitions.
\end{proof}

From now on we will not distinguish notationally between $N$ and
$Q(N)$. 

\begin{corollary}\label{cor-sixteen}
 For any $x\in\CJ$ we have a representable functor
 $F_x=\CJ(-,x)\:\CJ^\opp\to\Ab$, so $F_x\in\EED$.  For any morphism
 $u\:x\to y$ in $\CJ$ we have a morphism $F_u\:F_x\to F_y$ in $\EED$.
 The object $F_x$ can be displayed as follows:
 \begin{align*} 
     F_b  &= (\xymatrix{
     \Z/4 \ear[r] &
     \Z/2 \ar[r]^1 &
     \Z/2 \mar[r]^2 &
     \Z/4}) \\
     F_c  &= (\xymatrix{
     \Z/2 \ar[r] &
     0 \ar[r] &
     \Z \ear[r] &
     \Z/2}) \\
     F_a &= (\xymatrix{
     \Z/2 \ar[r]_0 &
     \Z \ear[r] &
     \Z/2 \ar[r]_1 &
     \Z/2}).
 \end{align*}
 The morphisms $F_u$ are given by the following square.  The rows are
 $F_b$, $F_c$, $F_a$ and $F_b$ respectively.  The maps from the first
 row to the second comprise the morphism $F_\bt\:F_b\to F_a$.
 Similarly, the second group of vertical maps comprise the morphism
 $F_\eta\:F_c\to F_a$, and the third group comprise
 $F_\rho\:F_a\to F_b$.
 \[ \xymatrix{
     F_b \ar[d]_{F_\bt} &
     \Z/4 \ear[r] \ear[d] &
     \Z/2 \ar[r]^1 \ar[d] &
     \Z/2 \mar[r]^2 \ar[d]^0 &
     \Z/4 \ear[d] \\
     F_c \ar[d]_{F_\eta} &
     \Z/2 \ar[r] \ar[d]_1 &
     0 \ar[r] \ar[d] &
     \Z \ear[r] \ear[d] &
     \Z/2 \ar[d]^1 \\
     F_a \ar[d]_{F_\rho} &
     \Z/2 \mar[d]_2 \ar[r]_0 &
     \Z \ear[r] \ear[d] &
     \Z/2 \ar[r]_1 \ar[d]^1 &
     \Z/2 \mar[d]^2 \\
     F_b &
     \Z/4 \ear[r] &
     \Z/2 \ar[r]_1 &
     \Z/2 \mar[r]_2 &
     \Z/4.
    }
 \]
 By the Yoneda Lemma we have $\EED(F_x,N)=N(x)$.
\end{corollary}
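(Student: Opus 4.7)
The plan is to leverage the equivalence $\EED \simeq [\CJ^\opp, \Ab]$ established in Proposition~\ref{prop-CJ-Ab}, so that the entire corollary reduces to instances of standard representable-functor facts together with direct tabulation of hom-sets from Definition~\ref{defn-CJ}.

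First, I would observe that because $Q$ is an equivalence, any representable functor $F_x = \CJ(-,x)$ is automatically an object of $\EED$, and any morphism $u\colon x\to y$ in $\CJ$ induces $F_u\colon F_x \to F_y$ by post-composition. The Yoneda identification $\EED(F_x, N) = [\CJ^\opp,\Ab](F_x, QN) = N(x)$ is then also immediate from the standard Yoneda Lemma applied to $\CJ^\opp$.

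Next, to verify the three explicit descriptions of $F_a$, $F_b$, $F_c$, I would simply read off $F_x(a) = \CJ(a,x)$, $F_x(b) = \CJ(b,x)$, $F_x(c) = \CJ(c,x)$ from the $3 \times 3$ table in Definition~\ref{defn-CJ}, and compute the structure maps $\psi = \rho^*$, $\eta = \eta^*$, $\chi = \bt^*$ (pre-composition, since $Q$ is contravariant) on each generator. For example, in $F_b$ the generator $1_b \in \CJ(b,b) = \Z/4$ maps under $\rho^*$ to $\rho \in \CJ(a,b)$, showing $\psi$ is the surjection $\Z/4 \ear \Z/2$; and $\rho\eta \in \CJ(c,b)$ maps under $\bt^*$ to $\rho\eta\bt = 2\cdot 1_b$, showing $\chi$ is multiplication by $2$. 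The computations for $F_a$ and $F_c$ are analogous, using the defining relations $\bt\rho = 0$ and $2\cdot 1_b = \rho\eta\bt$.

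Finally, for the $4\times 3$ display of maps $F_\bt$, $F_\eta$, $F_\rho$, I would proceed column by column: each is induced by post-composition with the corresponding morphism of $\CJ$, so the entries are again just products read off from the table in Definition~\ref{defn-CJ}. For instance, the component of $F_\rho\colon F_a \to F_b$ at the object $a$ sends $1_a \in \CJ(a,a)$ to $\rho \in \CJ(a,b)$, which is the map $\Z \ear \Z/2$ shown. The only place that requires any care is distinguishing where the relations $\bt\rho = 0$ and $2\cdot 1_b = \rho\eta\bt$ produce a zero entry versus a nonzero generator, but these are immediate from the stated relations; there is no real obstacle in the proof, only the bookkeeping burden of confirming all sixteen squares commute and all entries match the diagram.
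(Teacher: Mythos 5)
Your proposal is correct and takes essentially the same approach as the paper, which likewise treats this as a direct (if lengthy) tabulation from Definition~\ref{defn-CJ} --- computing each $F_x$ by pre-composition and each $F_u$ by post-composition, using the relations $\bt\rho=0$ and $2\cdot 1_b=\rho\eta\bt$, and invoking the Yoneda Lemma for the final identity. The only trivial slip is your count of commuting squares (the $4\times 4$ grid has sixteen entries but nine squares), which does not affect the argument.
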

\begin{proof}
 This is a straightforward (if somewhat lengthy) calculation from the
 definitions.  For example, the first row is 
 \[ \xymatrix{
     \CJ(b,b) \ar[r]^{\rho^*} \ar@{=}[d] &
     \CJ(a,b) \ar[r]^{\eta^*} \ar@{=}[d] &
     \CJ(c,b) \ar[r]^{\bt^*}  \ar@{=}[d] &
     \CJ(b,b)                 \ar@{=}[d] \\
     \Z/4       \ear[r] &
     (\Z/2)\rho \ar[r]_\simeq &
     (\Z/2)\rho\eta \mar[r] &
     \Z/4.
    }
 \]
\end{proof}

\begin{definition}\label{defn-degenerations}
 Given subcategories $\CA,\CC\sse\Ab$ we write $\ED[\CA,\CC]$ for the
 subcategory of $\ED$ consisting of $\eta$-diagrams $A\xra{\eta}C$
 with $A\in\CA$ and $C\in\CC$.  We will particularly be interested in
 the categories 
 \begin{align*}
  \CF &= \{A\in\Ab\st 2.1_A \text{ is injective }\} = \{A\st A[2]=0\} \\
  \CD &= \{C\in\Ab\st 2.1_C \text{ is surjective }\} = \{C\st C/2=0\}.
 \end{align*}
\end{definition}
\begin{remark}\label{rem-degenerations}
 Consider an exact extended $\eta$-diagram $M=(A,B,C,\eta,\chi,\psi)$,
 so we have a short exact sequence $C/2\xra{\chi}B\xra{\psi}A[2]$.  If
 $\chi=0$ we must have $C/2=0$ (so $C\in\CD$) and $\psi$ gives an
 isomorphism $B\to A[2]$.  Similarly, if $\psi=0$ then $A\in\CF$ and
 $C/2\simeq B$.  Arguing along these lines, we find that:
 \begin{itemize}
  \item The functor $\pi$ gives an equivalence
   $\{M\st\chi=0\}\to\ED[\Ab,\CD]$.\\  Moreover, $\al$ gives an equivalence
   $\{M\st C=0\}\to\Ab$.
  \item The functor $\pi$ gives an equivalence
   $\{M\st\psi=0\}\to\ED[\CF,\Ab]$.\\  Moreover, $\gm$ gives an equivalence
   $\{M\st A=0\}\to\Ab$.
  \item The functor $\pi$ also gives an equivalence
   $\{M\st B=0\}\to\ED[\CF,\CD]$.
 \end{itemize}
\end{remark}

To complete the picture, we should analyse the case where $\eta=0$.
This is a little more elaborate.
\begin{definition}\label{defn-H}
 We define a category $\SPP$ (short for ``split Postnikov pairs'') as
 follows.  The objects are pairs $(A,C)$ of abelian groups.  The
 morphisms from $(A_0,C_0)$ to $(A_1,C_1)$ are triples $(f,g,u)$ where
 $f\:A_0\to A_1$ and $h\:C_0\to C_1$ and $u\:A_0[2]\to C_1/2$.  The
 identity morphism of $(A,C)$ is $(1_A,1_C,0)$.  The composite of
 \[ (A_0,C_0) \xra{(f_0,h_0,u_0)}
    (A_1,C_1) \xra{(f_1,h_1,u_1)}
    (A_2,C_2) 
 \]
 is $(f_1f_0,h_1h_0,(h_1)_*u_0+(f_0)^*u_1)$.  (We leave it to the
 reader to check that this is associative and unital.)  Next, for
 $(A,C)\in\SPP$ we define $H(A,C)\in\EEED$ to be the diagram
 \[ C/2\oplus A[2] \xra{\psi} A \xra{0} C \xra{\chi} C/2\oplus A[2] \]
 where $\psi(\ov{c},a)=a$ and $\chi(c)=(c+2C,0)$.  Given a morphism
 $(f,h,u)\:(A_0,C_0)\to(A_1,C_1)$ we define
 $g\:C_0/2\oplus A_0[2]\to C_1/2\oplus A_1[2]$ by 
 \[ g(c_0+2C_0,a_0) = (h(c_0)+u(a_0)+2C_1,f(a_0)). \]
 We find that the diagram 
 \[ \xymatrix{
     C_0/2\oplus A_0[2] \ar[rr]^{\psi_0} \ar[d]_g &&
     A_0 \ar[r]^0 \ar[d]_f &
     C_0 \ar[rr]^{\chi_0} \ar[d]^h &&
     C_0/2\oplus A_0[2] \ar[d]^g \\
     C_1/2\oplus A_1[2] \ar[rr]_{\psi_1} &&
     A_1 \ar[r]_0 &
     C_1 \ar[rr]_{\chi_1} &&
     C_1/2\oplus A_1[2]
    }
 \]
 commutes, so we have a morphism
 $(f,g,h)\in\EEED(H(A_0,C_0),H(A_1,C_1))$.  We write $H(f,h,u)$ for
 this morphism.
\end{definition}

\begin{proposition}\label{prop-H-equivalence}
 The above construction gives an equivalence
 $H\:\SPP\to\{M\in\EEED\st\eta=0\}$.
\end{proposition}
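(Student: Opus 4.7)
The plan is to verify essential surjectivity and full faithfulness separately; both follow fairly directly from the setup in Section 2 once we use the condition $\eta=0$ to produce a splitting.

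For essential surjectivity, let $M=(A,B,C,0,\chi,\psi)$ be an EEED with $\eta=0$. By Lemma~\ref{lem-EEED-Phi}, the short exact sequence $E=(C/2\xra{\chi}B\xra{\psi}A[2])$ satisfies $\Phi(E)=\ov{\eta}=0$. Since both $A[2]$ and $C/2$ have exponent two, Remark~\ref{rem-exponent-two} tells us that $\Phi\:\Ext(A[2],C/2)\to\Hom(A[2],C/2)$ is itself an isomorphism, so $E$ splits. Choose a splitting; it yields an isomorphism $B\simeq C/2\oplus A[2]$ under which $\chi$ becomes $c\mapsto(c+2C,0)$ and $\psi$ becomes $(\ov{c},a)\mapsto a$. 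Together with $\eta=0$, this gives an isomorphism $M\simeq H(A,C)$ in $\EEED$.

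For full faithfulness, consider a morphism $(f,g,h)\in\EEED(H(A_0,C_0),H(A_1,C_1))$. The maps $f\:A_0\to A_1$ and $h\:C_0\to C_1$ are unconstrained by the middle square (since $\eta=0$ on both sides). Commutativity of $\psi_1 g=f\psi_0$ forces the second component of $g(\ov{c}_0,a_0)$ to be $f(a_0)$, and commutativity of $g\chi_0=\chi_1 h$ forces $g(\ov{c}_0,0)=(\ov{h(c_0)},0)$. The only remaining freedom is the first component of $g$ on the summand $A_0[2]$, which is an arbitrary homomorphism $u\:A_0[2]\to C_1/2$; then necessarily $g(\ov{c}_0,a_0)=(\ov{h(c_0)}+u(a_0),f(a_0))$, which is exactly the formula defining $H(f,h,u)$. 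So $H$ induces a bijection on morphism sets, and a direct expansion of $g_1\circ g_0$ recovers the composition law $(f_1f_0,h_1h_0,(h_1)_*u_0+(f_0)^*u_1)$ of $\SPP$, confirming functoriality.

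The main obstacle is the essential surjectivity, which rests on leveraging Lemma~\ref{lem-EEED-Phi} plus the exponent-two remark to deduce that the extension $C/2\to B\to A[2]$ actually splits (not merely that its class is divisible by $2$). The faithfulness and fullness are routine diagram-chasing, and the composition check is a short unwinding of $g_1\circ g_0$.
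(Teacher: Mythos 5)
Your proof is correct and follows essentially the same route as the paper: both establish the equivalence by checking that $H$ is bijective on morphism sets (a computation the paper leaves to the reader and you carry out correctly, including the composition law) and by splitting the extension $C/2\to B\to A[2]$ when $\eta=0$. The only cosmetic difference is in how the splitting is obtained --- the paper notes directly that $2\cdot 1_B=\chi\eta\psi=0$, so the sequence is one of $\Z/2$-vector spaces and splits for free, whereas you route through Lemma~\ref{lem-EEED-Phi} and the injectivity of $\Phi$ on $\Ext(A[2],C/2)$; both are valid.
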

\begin{proof}
 We leave it to the reader to check that $H$ gives an isomorphism
 \[ \SPP((A_0,C_0),(A_1,C_1)) \to \EEED(H(A_0,C_0),H(A_1,C_1)), \]
 and that this is compatible with composition.  Thus, $H$ defines a
 full and faithful embedding $\SPP\to\EEED$.  Now consider an EEED $M$
 with $\eta=0$, say $M=(A,B,C,0,\chi,\psi)$.  We then have
 $2.1_B=\psi\eta\chi=0$, so the sequence $C/2\to B\to A[2]$ is a short
 exact sequence of vector spaces over the field $\Z/2$, so it splits.
 Any choice of splitting gives an isomorphism $H(A,C)\to M$.  It
 follows that the essential image of $H$ is $\{M\st\eta=0\}$, as
 claimed.
\end{proof}

We now explain the precise connection between Moore diagrams and
$\eta$-diagrams.  

\begin{proposition}\label{prop-EMD-EEED}
 Let $\EMD'$ denote the full subcategory of $\EEED$ consisting of
 diagrams $(A,B,C,\eta,\chi,\psi)$ for which the map
 $\eta\:A\to C$ is surjective with kernel $2A$.  Then there is an
 equivalence $E\:\EMD\to\EMD'$ given by 
 \[ E(B\xra{\psi}A\xra{\phi}B) = 
     (B\xra{\psi}A\xra{\text{proj}}A/2\xra{\ov{\phi}}B)
 \]
 (where $\ov{\phi}$ is the unique map with
 $\ov{\phi}\circ\text{proj}=\phi$). 
\end{proposition}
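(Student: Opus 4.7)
The plan is to construct an explicit quasi-inverse $G\:\EMD'\to\EMD$ and verify that $GE=1_{\EMD}$ strictly while $EG$ is naturally isomorphic to $1_{\EMD'}$. The conceptual point is that the hypothesis ``$\eta$ surjective with kernel $2A$'' pins down the middle object $C$ of an $\EMD'$ object as $A/2$ canonically, so the extra datum present in an EEED relative to a Moore diagram is redundant and can be folded away.

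First I would verify that $E$ actually lands in $\EMD'$. The relation $2\phi=0$ observed in Definition~\ref{defn-moore-system} legitimises the factorisation $\phi=\ov{\phi}\circ\text{proj}$, and the EEED axioms for $E(M)$ reduce directly to the Moore relations ($2\cdot\text{proj}=0$ because $A/2$ has exponent $2$; $\psi\ov{\phi}=0$ because $\psi\phi=0$ and $\text{proj}$ is surjective; $\ov{\phi}\circ\text{proj}\circ\psi=\phi\psi=2\cdot 1_B$). The exactness of $C/2\xra{\chi}B\xra{\psi}A[2]$ becomes that of $A/2\xra{\phi}B\xra{\psi}A[2]$ (using $C/2=A/2$ as $A/2$ has exponent $2$), which is the assumed exactness of $M$; and $\text{proj}\:A\to A/2$ is tautologically surjective with kernel $2A$.

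Next I would define $G(A,B,C,\eta,\chi,\psi)=(B\xra{\psi}A,\;A\xra{\chi\eta}B)$. The Moore relations $\psi(\chi\eta)=(\psi\chi)\eta=0$ and $(\chi\eta)\psi=2\cdot 1_B$ come directly from the EEED axioms. For exactness of $G(N)$ I would invoke the isomorphism $\bar\eta\:A/2\xra{\sim}C$ canonically determined by $\eta$ (noting that $\eta$ surjective together with $2\eta=0$ forces $C$ to have exponent $2$, so $C=C/2$) to identify the EEED short exact sequence $C/2\to B\to A[2]$ with the required sequence $A/2\to B\to A[2]$ for $G(N)$. Functoriality of $G$ on morphisms is immediate since $G$ acts as the identity on underlying group maps.

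Finally, $GE(M)=M$ on the nose because $\ov{\phi}\circ\text{proj}=\phi$, and the natural transformation $EG\Rightarrow 1_{\EMD'}$ has components $(1_A,1_B,\bar\eta)$; the only nontrivial compatibility to check is $\chi\bar\eta=\ov{\chi\eta}$, which follows by precomposing with the surjection $\text{proj}$ (both sides then equal $\chi\eta$). I expect no serious obstacle: every check is an invocation of a universal property or a short diagram chase, and the only conceptual point is recognising that the hypothesis on $\eta$ is precisely what makes the identification $C\simeq A/2$ canonical and natural in morphisms of $\EMD'$.
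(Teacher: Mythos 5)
Your proposal is correct and follows exactly the paper's approach: the paper simply exhibits the same quasi-inverse $E^{-1}(B\xra{\psi}A\xra{\eta}C\xra{\chi}B)=(B\xra{\psi}A\xra{\chi\eta}B)$ and leaves all further details to the reader. Your write-up supplies precisely those omitted verifications (well-definedness on objects, exactness via the induced isomorphism $A/2\simeq C$, and the natural isomorphism $EG\simeq 1$), and they check out.
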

\begin{proof}
 The inverse functor is just 
 \[ E^{-1}(B\xra{\psi}A\xra{\eta}C\xra{\chi}B) = 
     (B\xra{\psi}A\xra{\chi\eta}B).
 \]
 We leave all further details to the reader.
\end{proof}

\begin{lemma}\label{lem-al}
 If $N\in\EMD'$ and $N'\in\EEED$ then the map
 $\al\:\ED(\pi(N),\pi(N'))\to\Hom(\al(N),\al(N'))$ is a bijection.
\end{lemma}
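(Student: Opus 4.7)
The plan is to show this by direct computation from the definitions. A morphism in $\ED(\pi(N),\pi(N'))$ is a pair $(f,h)$ with $f\:A\to A'$ and $h\:C\to C'$ satisfying $h\eta = \eta' f$, and the map $\al$ sends $(f,h)$ to $f$. So I need to show that for each $f\:A\to A'$ there is a unique $h\:C\to C'$ with $h\eta=\eta' f$.

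First I would handle uniqueness: since $N\in\EMD'$, the map $\eta\:A\to C$ is surjective, so any $h$ satisfying $h\eta=\eta' f$ is determined by its values on $\eta(A)=C$. Thus at most one such $h$ can exist.

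For existence, I would try to define $h$ by $h(\eta(a)):=\eta'(f(a))$. This is well-defined on $C$ precisely if $\eta(a)=0$ implies $\eta'(f(a))=0$. Using that $N\in\EMD'$ gives $\ker(\eta)=2A$, so I only need to verify that $\eta'(f(2a))=0$ for all $a\in A$; but $\eta'(f(2a))=2\,\eta'(f(a))=0$ since $2\eta'=0$ holds in any $\eta$-diagram. Thus $h$ is well-defined, additive (being induced by an additive map on $A$ modulo $\ker\eta$), and satisfies the required commutation, completing the proof.

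There is no real obstacle here; the content of the lemma is that the freeness in the definition of $\ED$ (namely, the presence of $C$ as independent data) collapses completely on the subcategory $\EMD'$, where $C=A/2$ is forced. The hypothesis $N'\in\EEED$ is not actually used beyond the trivial fact $2\eta'=0$, which holds for any $\eta$-diagram.
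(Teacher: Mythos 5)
Your proof is correct and follows essentially the same route as the paper's: define $h$ on $C=\eta(A)$ by $h(\eta(a))=\eta'(f(a))$, use $\ker(\eta)=2A$ together with $2\eta'=0$ for well-definedness, and surjectivity of $\eta$ for uniqueness. Your closing observation that only $2\eta'=0$ is needed from $N'$ matches what the paper's argument actually uses.
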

\begin{proof}
 If we write $N$ and $N'$ in the usual form, then
 $\ED(\pi(N),\pi(N'))$ is the set of pairs $(f,h)$ making the
 following diagram commute:
 \[ \xymatrix{
      A \ar[d]_f \ear[r]^\eta & C \ar[d]^h \\
      A' \ar[r]_{\eta'} & C'.
    }
 \]
 If we are given an arbitrary homorphism $f\:A\to A'$, we can define
 $h\:C\to C'$ as follows.  Given $a\in A$, we choose $c\in C$ with
 $\eta(c)=a$, then put $h(a)=\eta'(f(c))$.  By assumption we have
 $\ker(\eta)=2A$ and $2\eta'=0$ so $\eta'(f(2A))=0$, which proves that
 $h$ is well-defined.  It is clear that $h$ is the unique homomorphism
 making the diagram commute.  Thus, the projection map
 $(f,h)\mapsto f$ is an isomorphism as claimed.
\end{proof}

\begin{corollary}\label{cor-pi-EMD}
 The functor $\al\:\EMD\to\Ab$ is full and essentially surjective, and
 reflects isomorphisms.  For any $M,M'\in\EMD$ there is a natural short
 exact sequence 
 \[ \Hom(\al(M)[2],\al(M')/2) \xra{}
     \EMD(M,M') \xra{} \Hom(\al(M),\al(M')).
 \]
\end{corollary}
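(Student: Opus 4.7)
The plan is to transfer everything along the equivalence $E\:\EMD\to\EMD'$ from Proposition~\ref{prop-EMD-EEED}, and then assemble the result by combining Proposition~\ref{prop-pi} with Lemma~\ref{lem-al}. The key observation I will use repeatedly is that for $M\in\EMD$ corresponding to $E(M)=(B\xra{\psi}A\xra{\text{proj}}A/2\xra{\ov{\phi}}B)\in\EMD'$, we have $\al(E(M))=A=\al(M)$ and $\gm(E(M))=A/2$, which is already annihilated by $2$, so $\gm(E(M))/2=A/2=\al(M)/2$.

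For the short exact sequence, I will apply Proposition~\ref{prop-pi} to $E(M),E(M')\in\EEED$, obtaining
\[ \Hom(\al(M)[2],\al(M')/2) \mar[r]^(0.6)\xi \EEED(E(M),E(M')) \ear[r]^(0.55)\pi \ED(\pi E(M),\pi E(M')). \]
The middle term is $\EMD(M,M')$, because $E$ is an equivalence and $\EMD'$ is a full subcategory of $\EEED$. By Lemma~\ref{lem-al} (which applies because $E(M)\in\EMD'$), the map $\al$ identifies the right-hand term with $\Hom(\al(M),\al(M'))$. Naturality and exactness are inherited from Proposition~\ref{prop-pi}. Fullness of $\al\:\EMD\to\Ab$ then falls out as the surjectivity of the right-hand map.

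For essential surjectivity, given $A\in\Ab$, I will take the $\eta$-diagram $L=(A\xra{\text{proj}}A/2)\in\ED$ and apply the essential surjectivity clause of Proposition~\ref{prop-pi} to produce $N\in\EEED$ with $\pi(N)=L$. The map $\eta\:A\to A/2$ in $N$ is by construction the projection, which is surjective with kernel $2A$, so $N\in\EMD'$, and $E^{-1}(N)\in\EMD$ satisfies $\al(E^{-1}(N))=A$.

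For reflecting isomorphisms, suppose $f\:M\to M'$ in $\EMD$ has $\al(f)$ an isomorphism; under $E$ this is a morphism $(f,g,h)\:E(M)\to E(M')$ in $\EEED$, and Lemma~\ref{lem-al} tells us $h$ is forced to be the map $A/2\to A'/2$ induced by $f$, so it is an isomorphism. Thus $\pi(f,g,h)=(f,h)$ is an isomorphism in $\ED$, and Proposition~\ref{prop-pi} says $\pi$ reflects isomorphisms, so $(f,g,h)$ is an isomorphism in $\EEED$, hence in $\EMD'$, hence $f$ is an isomorphism in $\EMD$. There is no real obstacle here; the only step that requires care is verifying the identification $\gm(E(M))/2=\al(M)/2$, since this is what makes the short exact sequence from Proposition~\ref{prop-pi} take the clean form stated in the corollary.
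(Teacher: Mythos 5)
Your argument is correct, and for the central claim---the short exact sequence and fullness of $\al$---it is exactly the paper's proof: transfer along the equivalence $E$ of Proposition~\ref{prop-EMD-EEED}, apply Proposition~\ref{prop-pi}, identify the left term via $\al(E(M))=\al(M)$ and $\gm(E(M'))/2=(A'/2)/2=A'/2$, and identify the right term via Lemma~\ref{lem-al}. The only divergence is in the two auxiliary claims. For essential surjectivity the paper writes down an explicit exact Moore diagram $M=(A,\,A/2\oplus A[2],\,\phi,\psi)$ with $\al(M)=A$, whereas you invoke the essential surjectivity clause of Proposition~\ref{prop-pi} applied to the $\eta$-diagram $A\to A/2$ and pull back through $E^{-1}$; the paper explicitly remarks that this ``process of translation'' works, so this is a legitimate variant, and your check that the resulting $N$ lands in $\EMD'$ is the one point that needs saying. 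For reflection of isomorphisms the paper applies the five lemma directly to the short exact sequences $A/2\to B\to A[2]$, while you route through Lemma~\ref{lem-al} (to see that $h$ is the mod-$2$ reduction of $f$, hence invertible) and then the isomorphism-reflection clause of Proposition~\ref{prop-pi}; since that clause is itself proved by the five lemma, the two arguments are the same computation packaged differently. Your version buys uniformity (everything is deduced from Proposition~\ref{prop-pi}), at the cost of being slightly less self-contained than the paper's direct constructions.
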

\begin{proof}
 Put $N=E(M)$ and $N'=E(M')$, so these are objects in the full
 subcategory $\EMD'\sse\EEED$.  Proposition~\ref{prop-pi} gives a
 short exact sequence
 \[ \xymatrix{
     \Hom(\al(N)[2],\gm(N')/2) \mar[r]^(0.6)\xi &
     \EEED(N,N') \ear[r]^(0.45)\pi &
     \ED(\pi(N),\pi(N')).
 } \]
 From the definition of $E$ we have $\al(N)=\al(M)$ and
 $\gm(N')/2=\al(M')/2$.  Proposition~\ref{prop-EMD-EEED} identifies
 $\EEED(N,N')$ with $\EMD(M,M')$, and Lemma~\ref{lem-al} identifies
 $\ED(\pi(N),\pi(N'))$ with $\Hom(\al(M),\al(M'))$.  The above short
 exact sequence can thus be rewritten as
 \[ \Hom(\al(M)[2],\al(M')/2) \xra{}
     \EMD(M,M') \xra{} \Hom(\al(M),\al(M')),
 \]
 as claimed.  This shows that $\al:\EMD\to\Ab$ is full.  

 We could prove that $\al$ is essentially surjective by a similar
 process of translation.  More directly, for any abelian group $A$ we
 can take $B=A/2\oplus A[2]$ and $\phi(a)=(a+2A,0)$ and
 $\psi(a+2A,a')=a'$; this gives an exact Moore diagram
 $M=(A,B,\phi,\psi)$ with $\al(M)=A$, proving that $\al$ is
 essentially surjective.

 Similarly, suppose we have a morphism $p\:M\to M'$ in $\EMD$, given
 by maps $f\:A\to A'$ and $g\:B\to B'$, and suppose that $\al(p)=f$ is
 an isomorphism.  We then have a commutative diagram with exact rows
 as follows:
 \[ \xymatrix{
     A/2 \ar[d]_f^\simeq \mar[r]^\phi & 
     B \ar[d]^g \ear[r]^\psi &
     A[2] \ar[d]^f_\simeq \\     
     A'/2 \mar[r]_{\phi'} &
     B' \ear[r]_{\psi'} &
     A'[2]
 } \]
 It follows that $g$ is also an isomorphism, so $p$ is an isomorphism
 in $\EMD$. 
\end{proof}
 
\section{Functors from Spectra}

In order to construct extended $\eta$-diagrams from spectra, we need
some low-dimensional calculations of stable homotopy groups.  In
discussing these, it is helpful to remember that the graded groups
$[X,Y]_*$ are naturally modules over $\pi_*(S)$, and composition is
bilinear.  For example, if we have a map $f\:\Sg^kX\to Y$ then the
following composites are all the same and can be denoted by $\eta.f$:
\[ (\Sg^{k+1}X\xra{\eta\Smash 1_X}\Sg^kX\xra{f}Y) = 
   (\Sg^{k+1}X\xra{\eta\Smash f}Y) = 
   (\Sg^{k+1}X\xra{\Sg f}\Sg Y\xra{\eta\Smash 1_Y}Y).
\]
(We have suppressed any mention of signs here because $2\eta=0$.)
Similar remarks apply with $\eta$ replaced by $2\:S\to S$.  In
particular we have the cofibration sequence
\[ S \xra{2} S \xra{\rho} S/2 \xra{\bt} S^1 \xra{2} S^1 \]
in which adjacent composites vanish, so $2.\rho=0$ and $2.\bt=0$.

\begin{proposition}\label{prop-two}
 The identity map of $S/2$ has order $4$, and the following diagram
 commutes:
 \[ \xymatrix{
      S/2 \ar[d]_2 \ar[r]^\bt & S^1 \ar[d]^\eta \\
      S/2 & S^0 \ar[l]^\rho.
    }
 \]
\end{proposition}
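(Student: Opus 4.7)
The plan is to identify $2\cdot 1_{S/2}$ using the long exact sequences arising from the defining cofibration, narrow it down to two candidates, and then eliminate the trivial candidate via a Steenrod-operation calculation on $S/2\Smash S/2$.

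First, I apply the contravariant functor $[-,S/2]$ to the cofibration $S\xra{2}S\xra{\rho}S/2\xra{\bt}S^1\xra{2}S^1$ to obtain an exact sequence
\[ \pi_1(S/2) \xra{\bt^*} [S/2,S/2] \xra{\rho^*} \pi_0(S/2) \xra{2} \pi_0(S/2). \]
Since $\rho^*(2\cdot 1_{S/2})=2\rho=0$, I have $2\cdot 1_{S/2}=g\bt$ for some $g\in\pi_1(S/2)$.  The homotopy long exact sequence of the same cofibration, together with $\pi_0(S)=\Z$, $\pi_1(S)=(\Z/2)\eta$ and $2\eta=0$, yields $\pi_0(S/2)=\Z/2$ and $\pi_1(S/2)=(\Z/2)\rho\eta$.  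Hence $2\cdot 1_{S/2}$ is either $0$ or $\rho\eta\bt$.

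The key step is to rule out $2\cdot 1_{S/2}=0$.  If this held then, smashing the defining cofibration of $S/2$ with $S/2$, the attaching map would be null, forcing a splitting $S/2\Smash S/2\simeq S/2\Wedge\Sg S/2$.  By K\"unneth and the Cartan formula, however, with $a_0,a_1$ denoting the generators of $H^*(S/2;\Z/2)$ (related by $\Sq^1 a_0=a_1$),
\[ \Sq^2(a_0\otimes a_0) = (\Sq^1 a_0)\otimes(\Sq^1 a_0) = a_1\otimes a_1 \neq 0 \]
in $H^2(S/2\Smash S/2;\Z/2)$, whereas $\Sq^2$ would vanish on the degree-zero class of any splitting $S/2\Wedge\Sg S/2$ (because $H^2(S/2;\Z/2)=0$).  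The resulting contradiction forces $2\cdot 1_{S/2}=\rho\eta\bt$, which is precisely the commutativity asserted in the statement.

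Finally, $\rho\eta\bt$ is nonzero: the map $\bt^*$ is injective on $\pi_1(S/2)$ (its kernel $2\pi_1(S/2)$ vanishes), and $\rho\eta$ generates $\pi_1(S/2)$.  On the other hand, $2\cdot\rho\eta\bt=\rho(2\eta)\bt=0$.  So $1_{S/2}$ has order exactly $4$.  I expect the main obstacle to be the Steenrod-operation step: pure exact-sequence bookkeeping only pins $2\cdot 1_{S/2}$ down to two candidates of order dividing $2$, and distinguishing them requires leaving the triangulated category and invoking external input, here in the form of a cohomology operation.
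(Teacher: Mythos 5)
Your proof is correct and follows essentially the same route as the paper's: compute $\pi_0(S/2)=(\Z/2)\rho$ and $\pi_1(S/2)=(\Z/2)\rho\eta$ from the defining cofibration, use the exact sequence $\pi_1(S/2)\xra{\bt^*}[S/2,S/2]\xra{\rho^*}\pi_0(S/2)$ to pin $2\cdot 1_{S/2}$ down to $\{0,\rho\eta\bt\}$, and rule out $0$ via $\Sq^2(a\ot a)=b\ot b\neq 0$ in $H^*(S/2\Smash S/2;\Z/2)$. The only difference is the order of the steps (the paper establishes $2\cdot 1_{S/2}\neq 0$ first and then locates it in the exact sequence), which is immaterial.
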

\begin{proof}
 This is classical, probably due to Toda~\cite{to:oic}.  One argument
 is as follows.  We have $S/2=\Sg^{-1}\R P^2$, so
 $H^*(S/2;\Z/2)=\Z/2\{a,b\}$ with $|a|=0$ and $|b|=1$ and
 $\Sq^1(a)=b$.  This gives
 \[ H^*(S/2\Smash S/2) = (\Z/2)\{a\ot a,a\ot b,b\ot a,b\ot b\} \]
 and by the Cartan formula we have $\Sq^2(a\ot a)=b\ot b\neq 0$.
 On the other hand, if we let $\iota$ denote the identity map of
 $S/2$, we see that $S/2\Smash S/2$ is the cofibre of $2\iota$.  If
 $2\iota$ were zero then $S/2\Smash S/2$ would split as
 $S/2\Wedge S^1/2$ and we would have $\Sq^2=0$ in cohomology, which is
 false.  Thus, we must have $2\iota\neq 0$.  On the other hand, the
 cofibration sequence $S\xra{\rho}S/2\xra{\bt}S^1$ gives an exact
 sequence  
 \[ \pi_1(S/2) = [S^1,S/2] \xra{\bt^*} [S/2,S/2]
      \xra{\rho^*} [S,S/2] = \pi_0(S/2)
 \]
 We will take it as given that $\pi_0(S)=\Z$ and $\pi_1(S)=(\Z/2)\eta$
 and $\pi_2(S)=(\Z/2)\eta^2$.  There is a long 
 exact sequence 
 \[ \pi_i(S)\xra{2}\pi_i(S)\xra{\rho_*}\pi_i(S/2)
    \xra{\bt_*}\pi_{i-1}(S) \xra{2} \pi_{i-1}(S).
 \]
 From this we read off that $\pi_0(S/2)=(\Z/2)\rho$ and
 $\pi_1(S/2)=(\Z/2)\rho\eta$.  Thus, our previous exact sequence looks
 like 
 \[ (\Z/2)\rho\eta \xra{\bt^*} [S/2,S/2] \xra{\rho^*} (\Z/2)\rho, \]
 and of course $\rho^*(\iota)=\rho$.  The only way we can have
 $2\iota\neq 0$ is if $\bt^*$ is injective and
 $2\iota=\bt^*(\rho\eta)=\rho\eta\bt$ as claimed.
\end{proof}

\begin{corollary}
 There is a full and faithful additive embedding $T\:\CJ\to\Spectra$
 given by $T(a)=S$ and $T(b)=S/2$ and $T(c)=S^1$. \qed
\end{corollary}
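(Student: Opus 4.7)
The plan is to verify directly that the specified assignment extends to an additive functor and then compute each Hom group on the spectrum side to match the table in Definition~\ref{defn-CJ}.

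First I would check that the images of the named generators satisfy the defining relations of $\CJ$. The relations $2\rho=0$, $2\eta=0$ and $2\bt=0$ follow because adjacent composites in the cofibration sequence
\[ S \xra{2} S \xra{\rho} S/2 \xra{\bt} S^1 \xra{2} S^1 \]
vanish; the same sequence also gives $\bt\rho=0$; and the remaining relation $2.1_{S/2}=\rho\eta\bt$ is precisely Proposition~\ref{prop-two}. These are exactly the relations in Definition~\ref{defn-CJ}, so $T$ extends to a well-defined additive functor.

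Second I would verify that $T$ induces bijections on Hom groups by computing $[T(x),T(y)]$ for each pair $(x,y)\in\{a,b,c\}^2$. The ingredients are the low-dimensional stable stems $\pi_0(S)=\Z$, $\pi_1(S)=(\Z/2)\eta$, $\pi_{-1}(S)=0$ (already used in the proof of Proposition~\ref{prop-two}) together with the six-term exact sequences obtained by applying $[-,Y]$ and $[X,-]$ to the mod $2$ cofibration sequence. A typical case: applying $[-,S^1]$ gives
\[ [S^1,S^1] \xra{2} [S^1,S^1] \xra{\bt^*} [S/2,S^1] \xra{\rho^*} [S,S^1]=0, \]
so $[S/2,S^1]=\Z/2$ generated by $\bt$. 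The remaining computations are identical in spirit; the one nontrivial value $[S/2,S/2]=\Z/4$ is already established in Proposition~\ref{prop-two}.

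Third I would compare the resulting table of Hom groups with Definition~\ref{defn-CJ}: each $\CJ(x,y)$ and each $[T(x),T(y)]$ is cyclic of the same order, and the chosen generator of $\CJ(x,y)$ maps to the chosen generator of $[T(x),T(y)]$ by construction. Hence $T$ is bijective on every Hom group, so fully faithful. The only mildly delicate point is the identification $2.1_{S/2}=\rho\eta\bt$, but that is exactly the content of Proposition~\ref{prop-two}; everything else is a routine unwinding of the cofibration sequence.
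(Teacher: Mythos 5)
Your proof is correct and is essentially the argument the paper intends by stating this corollary without a separate proof: the relations of $\CJ$ hold in $\Spectra$ by Proposition~\ref{prop-two} together with the vanishing of adjacent composites in the mod~$2$ cofibration sequence, and the nine Hom groups $[T(x),T(y)]$ match the table in Definition~\ref{defn-CJ} via the same long exact sequences already used in the proof of that proposition, with generators corresponding to generators. The only tiny quibble is that $2\eta=0$ does not come from the cofibration sequence but from the standing fact $\pi_1(S)=(\Z/2)\eta$, which the paper takes as given; this does not affect the argument.
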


\begin{corollary}
 For any spectrum $X$ there is an exact extended $\eta$-diagram
 $G(X)\:\CJ^\opp\to\Ab$ given by $G(X)(x)=[T(x),X]$ for all $x\in\CJ$.
 In the notation of Corollary~\ref{cor-sixteen}, we have $G(S)=F_a$,
 $G(S/2)=F_b$ and $G(S^1)=F_c$.
\end{corollary}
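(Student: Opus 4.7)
The plan is to construct $G(X)$ as the composite of the embedding $T\colon\CJ\to\Spectra$ provided by the preceding corollary with the cohomological functor $[-,X]\colon\Spectra^\opp\to\Ab$, then to verify in turn that the resulting contravariant additive functor $\CJ\to\Ab$ is an EED, that it is exact, and that its values at $a,b,c\in\CJ$ are the three representable functors. Each step is essentially formal given what has already been set up.

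First I would observe that $G(X)=[T(-),X]$ is a composition of an additive functor with a cohomological functor, hence is an additive functor $\CJ^\opp\to\Ab$; Proposition~\ref{prop-CJ-Ab} then identifies it with an EED without further work. Unpacking this, the defining EED relations $2\eta=0$, $\psi\chi=0$ and $\chi\eta\psi=2.1_B$ are exactly the images under $[-,X]$ of the relations $2\eta=0$, $\bt\rho=0$ and $\rho\eta\bt=2.1_b$ that hold in $\CJ$ by Definition~\ref{defn-CJ}.

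Next, writing $A=[S,X]$, $B=[S/2,X]$ and $C=[S^1,X]$, the required exactness of $C\xra{2}C\xra{\chi}B\xra{\psi}A\xra{2}A$ is precisely the Puppe long exact sequence obtained by applying the cohomological functor $[-,X]$ to the cofibre sequence $S\xra{2}S\xra{\rho}S/2\xra{\bt}S^1\xra{2}S^1$, so this part of the statement is immediate from the axioms of a triangulated category.

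Finally, for the three identifications, the Yoneda lemma combined with the fact that $T$ is full and faithful yields $G(T(y))(x)=[T(x),T(y)]=\CJ(x,y)=F_y(x)$, naturally in $x\in\CJ^\opp$; specialising to $y=a,b,c$ gives $G(S)=F_a$, $G(S/2)=F_b$ and $G(S^1)=F_c$. There is no genuine obstacle here: the work was really done earlier, in choosing the generators and relations of $\CJ$ (Definition~\ref{defn-CJ}) and in verifying the crucial relation $2.1_{S/2}=\rho\eta\bt$ in Proposition~\ref{prop-two}, precisely so that $\CJ$ would be the full subcategory of $\Spectra$ on $\{S,S/2,S^1\}$ and this corollary would follow by pure formalism.
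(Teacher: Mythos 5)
Your proposal is correct and follows essentially the same route as the paper: formal functoriality via Proposition~\ref{prop-CJ-Ab}, exactness from applying $[-,X]$ to the cofibration sequence $S\xra{2}S\xra{\rho}S/2\xra{\bt}S^1\xra{2}S^1$, and the identifications $G(T(w))=F_w$ from full faithfulness of $T$ together with the Yoneda description of $F_w$. No gaps.
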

\begin{proof}
 It is formal that the given rule defines an additive functor
 $\CJ^\opp\to\Ab$, or in other words an extended $\eta$ diagram.
 The sequence
 \[ T(a) \xra{2} T(a) \xra{T(\rho)} T(b) 
     \xra{T(\bt)} T(c) \xra{2} T(c)
 \]
 is the cofibration sequence
 \[ S \xra{2} S \xra{\rho} S/2 \xra{\bt} S^1 \xra{2} S^1 \]
 so when we apply $[-,X]$ we get an exact sequence
 \[ G(X)(a) \xla{2} G(X)(a) \xla{\psi} G(X)(b) \xla{\chi}
     G(X)(c) \xla{2} G(X)(c) 
 \]
 or equivalently 
 \[ \xymatrix{ 
     G(X)(c)/2 \mar[r]^\chi &
     G(X)(b) \ear[r]^\psi &
     G(X)(a)[2].
 } \]
 Thus, we actually have an exact extended $\eta$-diagram.  As $T$ is
 full and faithful we have
 \[ G(T(w))(x) = [T(x),T(w)] = \CJ(x,w) = F_w(x), \]
 so $G(T(w))=F_w$.  This gives $G(S)=F_a$, $G(S/2)=F_b$ and
 $G(S^1)=F_c$ as claimed.
\end{proof}

\begin{remark}
 We will prove as Corollary~\ref{cor-G-surjective} that $G$ is
 essentially surjective.  Lemma~\ref{lem-H-one} describes a
 subcategory on which it reflects isomorphism.
 Corollary~\ref{cor-CW-maps} gives a smaller subcategory where $G$ is
 full, together with a good description of the kernel of the map
 $G\:[X,Y]\to\EEED(G(X),G(Y))$ in that context.  
\end{remark}

\begin{example}
 Given any pair of abelian groups $A$ and $C$, we have 
 Eilenberg-MacLane spectra $HA$ and $HC$ and we find that
 $G(HA\Wedge\Sg HC)=G(HA)\oplus G(\Sg HC)$ is the diagram $H(A,C)$
 defined in Definition~\ref{defn-H}.  However,
 the resulting map 
 \[ [HA_0\Wedge\Sg HC_0,HA_1\Wedge\Sg HC_1] \to 
     \SPP((A_0,C_0),(A_1,C_1)) = 
     \EEED(H(A_0,C_0),H(A_1,C_1))
 \]
 is not an isomorphism in general.  To cure this, let $\SPP^+$ be the
 category with the same objects as $\SPP$, and morphisms
 \[ \SPP^+((A_0,C_0),(A_1,C_1)) = 
     \Hom(A_0,A_1) \tm \Hom(C_0,C_1) \tm \Ext(A_0,C_1),
 \]
 with composition
 \[ (f_1,g_1,u_1)\circ (f_0,g_0,u_0) = 
     (f_1f_0,g_1g_0,(g_1)_*(u_0)+f_0^*(u_1)).
 \]
 Using the Hurewicz and Universal Coefficient theorems, we obtain
 natural isomorphisms
 \begin{align*}
  [HA_0,HA_1] &\simeq \Hom(A_0,A_1) &
  [\Sg HC_0,HA_1] &\simeq 0 \\
  [HA_0,\Sg HC_1] &\simeq \Ext(A_0,C_1) &
  [\Sg HC_0,\Sg HC_1] &\simeq \Hom(C_0,C_1).
 \end{align*}
 Using this we see that there is a full and faithful embedding
 $\tH\:\SPP^+\to\Spectra$ given by $\tH(A,C)=HA\Wedge\Sg HC$.  The map
 $\Phi\:\Ext(A_0,C_1)\to\Hom(A_0[2],C_1/2)$ from
 Proposition~\ref{prop-Phi} gives rise to a functor $\SPP^+\to\SPP$,
 which we also denote by $\Phi$.  We then find that the following
 diagram commutes up to natural isomorphism:
 \[ \xymatrix{
     \SPP^+ \ar[r]^{\tH} \ar[d]_\Phi & \Spectra \ar[d]^G \\
     \SPP \ar[r]_H & \EEED.
    }
 \]
\end{example}

\begin{definition}\label{defn-K}
 For any spectra $X$ and $Y$, we define
 \begin{align*}
  L(X,Y) &= \ker(G\:[X,Y]\to\EEED(G(X),G(Y))) \\
  K(X,Y) &= \ker(\pi G\:[X,Y] \to \ED(\pi G(X),\pi G(Y))) \\
         &= \{f\:X\to Y\st \pi_0(f)=0 \text{ and } \pi_1(f)=0\}.
 \end{align*}
 For any $f\in K(X,Y)$ we form a cofibration sequence $X\to Y\to Cf\to\Sg X$, 
 giving an exact sequence
 \[ \pi_1(X) \xra{f_*=0} \pi_1(Y) \xra{} \pi_1(Cf) \xra{}
    \pi_0(X) \xra{f_*=0} \pi_0(Y)
 \]
 and thus an extension 
 \[ E(f) = (\xymatrix{ \pi_1(Y) \mar[r] & \pi_1(Cf) \ear[r] & \pi_0(X) }).
 \]
 Standard results about uniqueness of cofibration sequences show that this is
 well-defined up to equivalence, we have a well-defined class
 $\om(f)=[E(f)]\in\Ext(\pi_0(X),\pi_1(Y))$.   
\end{definition}

\begin{lemma}\label{lem-om-natural}
 The map $\om\:K(X,Y)\to\Ext(\pi_0(X),\pi_1(Y))$ is a homomorphism.
 Moreover, given maps $W\xra{p}X\xra{f}Y\xra{q}Z$ with $f\in K(X,Y)$
 we have 
 \[ \om(qfp) = \pi_1(q)_*\pi_0(p)^*\om(f) \in\Ext(\pi_0(W),\pi_1(Z)),
 \]
\end{lemma}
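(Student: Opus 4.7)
The plan is to establish naturality first and then derive additivity as a consequence via the standard Baer-sum argument. Break the naturality statement into two subclaims: $\om(fp) = \pi_0(p)^*\om(f)$ for precomposition with $p\:W\to X$, and $\om(qf) = \pi_1(q)_*\om(f)$ for postcomposition with $q\:Y\to Z$; the full statement then follows by composing them (and noting that $\pi_0$ and $\pi_1$ are functors, so $fp$ and $qf$ also lie in the relevant $K$-groups).

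For the precomposition claim, use axiom TR3 of the triangulated category of spectra to produce a map of cofibre sequences
\[
\xymatrix{
W \ar[r]^{fp} \ar[d]_p & Y \ar[r] \ar@{=}[d] & C(fp) \ar[r] \ar[d] & \Sg W \ar[d]^{\Sg p} \\
X \ar[r]_f & Y \ar[r] & Cf \ar[r] & \Sg X.
}
\]
Applying $\pi_*$ and extracting the middle six-term sections yields a commutative diagram whose rows are the extensions $E(fp)$ and $E(f)$, whose left vertical map is the identity on $\pi_1(Y)$, and whose right vertical map is $\pi_0(p)$. This is precisely the statement that $[E(fp)] = \pi_0(p)^*[E(f)]$ in $\Ext(\pi_0(W),\pi_1(Y))$. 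The postcomposition claim is dual: a fill-in $Cf \to C(qf)$ produces a morphism of extensions over $(1,\pi_1(q))$, identifying $[E(qf)] = \pi_1(q)_*[E(f)]$.

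For additivity, factor $f+g$ as $X\xra{\Dl}X\Wedge X\xra{f\Wedge g}Y\Wedge Y\xra{\nabla}Y$ and apply the naturality just proved to obtain $\om(f+g) = \pi_1(\nabla)_*\pi_0(\Dl)^*\om(f\Wedge g)$. Since cofibres commute with wedges we have $C(f\Wedge g)\simeq Cf\Wedge Cg$, and under the canonical splittings $\pi_0(X\Wedge X)=\pi_0(X)\oplus\pi_0(X)$ and $\pi_1(Y\Wedge Y)=\pi_1(Y)\oplus\pi_1(Y)$ the extension $E(f\Wedge g)$ is $E(f)\oplus E(g)$. Substituting gives $\om(f+g) = \nabla_*\Dl^*(\om(f)\oplus\om(g))$, which by the Baer-sum discussion in the proof of Proposition~\ref{prop-Phi} is exactly $\om(f)+\om(g)$.

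The main technical point is the existence of the fill-in map $C(fp)\to Cf$ (and its dual). Such fill-ins are not unique, but this is harmless: the extension class $[E(f)]$ was already noted to be well-defined independent of the choice of cofibre representative, and any two fill-ins differ by something that preserves the equivalence class of the induced extension. Once naturality is in place, additivity is purely formal Ext bookkeeping, so no further obstruction arises.
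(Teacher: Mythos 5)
Your proposal is correct and follows essentially the same route as the paper: naturality in each variable via a triangulated-category fill-in on cofibre sequences (the paper invokes the octahedral axiom where you invoke TR3, but the resulting morphism of extensions and the appeal to the equivalence (a)$\Leftrightarrow$(b) of Proposition~\ref{prop-middle} are the same), followed by additivity via the diagonal/codiagonal factorisation of $f+g$ and the Baer-sum identification. The only cosmetic difference is that you spell out $C(f\Wedge g)\simeq Cf\Wedge Cg$ and $E(f\Wedge g)=E(f)\oplus E(g)$, which the paper leaves implicit.
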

\begin{proof}
 First suppose we have $X\xra{f}Y\xra{q}Z$ with $f\in K(X,Y)$.  The
 octahedral axiom then gives a commutative diagram
 \[ \xymatrix{
      Y \ar[r] \ar[d]_q & Cf \ar[r] \ar[d] & \Sg X \ar@{=}[d] \\
      Z \ar[r] & C(qf) \ar[r] & \Sg X.
    }
 \]
 We can apply $\pi_1$ to get a morphism of extensions, and then use
 the first part of Proposition~\ref{prop-middle} to deduce that
 $\om(qf)=\pi_1(q)_*\om(f)$.  A similar argument shows that
 $\om(fp)=\pi_0(p)^*\om(f)$.  Now consider the diagonal map
 $X\to X\Wedge X$ and the codiagonal map $\nabla\:Y\Wedge Y\to Y$.
 Given another map $f'\in K(X,Y)$ we have 
 \[ \om(f+f') = 
    \om(\nabla\circ(f\Wedge f')\circ\Dl) = 
    \nabla_*\Dl^*(\om(f)\oplus\om(f')) = \om(f)+\om(f'),
 \] 
 so $\om$ is a homomorphism.   
\end{proof}

\begin{proposition}\label{prop-om-xi}
 There is a natural commutative diagram as follows, in which the
 rectangle and the squares are pullbacks:
 \[ \xymatrix{
     L(X,Y) \mar[d] \ar[r]^(0.4){\om'} &
     2.\Ext(\pi_0(X),\pi_1(Y)) \ar[r] \mar[d] &
     0 \ar[d] \\
     K(X,Y) \ar[r]_(0.4)\om \mar[d] &
     \Ext(\pi_0(X),\pi_1(Y)) \ear[r]_(0.4)\Phi &
     \Hom(\pi_0(X)[2],\pi_1(Y)/2) \ar[r] \mar[d]_\xi &
     0 \ar[d] \\
     [X,Y] \ar[rr]_G & &
     \EEED(G(X),G(Y)) \ar[r]_\pi &
     \ED(\pi G(X),\pi G(Y)).
    }
 \]
 Moreover:
 \begin{itemize}
  \item[(a)] If the maps $\pi G$ and $\om$ are surjective, then so is
   $G$.
  \item[(b)] If $\om$ is an isomorphism then so is $\om'$.
  \item[(c)] If $\om$ is an isomorphism and
   $2.\Ext(\pi_0(X),\pi_1(Y))=0$, then $G$ is injective.
 \end{itemize}
\end{proposition}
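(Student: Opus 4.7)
My plan is to reduce everything to a single non-formal identity,
\[ G(f) = \xi\bigl(\Phi(\om(f))\bigr) \qquad\text{for all } f\in K(X,Y), \]
from which the existence of $\om'$, each pullback claim, and all three consequences follow by diagram chasing.

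First I would prove this identity. The $A$- and $C$-components of $\xi(u)$ vanish by Definition~\ref{defn-xi}, and those of $G(f)$ vanish because $f\in K$, so only the $B$-component matters. Given $g\colon S/2\to X$, write $a=g\rho\in\pi_0(X)[2]$. Since $f_*=0$ on $\pi_0$, we have $fg\circ\rho=0$, so the cofibre sequence of $\rho$ gives $fg=w\bt$ for some $w\in\pi_1(Y)$, well-defined modulo $2\pi_1(Y)$ by exactness of $[-,Y]$ on the rotated sequence. The claim then becomes $\Phi(\om(f))(a)\equiv w\pmod 2$. To see this, apply TR3 to the commutative square
\[ \xymatrix{ S/2\ar[r]^g\ar[d]_\bt & X\ar[d]^f \\ S^1\ar[r]_w & Y } \]
to extend it to a morphism from the cofibre triangle of $\bt$ (namely $S/2\xra{\bt}S^1\xra{\pm 2}S^1\xra{\pm\rho}\Sg(S/2)$) to that of $f$ (namely $X\xra{f}Y\xra{j}Cf\xra{q}\Sg X$). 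This produces $m\colon S^1\to Cf$ with $2m=\pm j_*(w)$ and $q_*(m)=\pm a$; since only mod-$2$ classes matter, the Snake-Lemma formula for $\Phi(E(f))$ then gives $\Phi(\om(f))(a)=w+2\pi_1(Y)$, and hence $\xi(\Phi(\om(f)))_B(g)=\bt^*(w)=w\bt=fg=G(f)_B(g)$.

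The three pullback claims are now formal. The top-left square is a pullback because $L=\ker G$, and for $f\in K$ the identity above combined with injectivity of $\xi$ (Proposition~\ref{prop-pi}) and $\ker\Phi=2\cdot\Ext$ (Proposition~\ref{prop-Phi}) gives $f\in L\iff\om(f)\in 2\cdot\Ext$; this both defines $\om'$ and exhibits the square as a pullback. The central square is the defining property of $2\cdot\Ext=\ker\Phi$. The right-hand square is the exactness clause of Proposition~\ref{prop-pi} identifying $\Hom(\pi_0(X)[2],\pi_1(Y)/2)$ with $\ker\pi$. The enclosing rectangle is a pullback as a horizontal composite of pullbacks.

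Finally, for the three consequences: (a) given $\phi\in\EEED(G(X),G(Y))$, surjectivity of $\pi G$ produces $h$ with $\pi G(h)=\pi(\phi)$, so $\phi-G(h)=\xi(u)$ for some $u$; surjectivity of $\Phi$ (automatic by Proposition~\ref{prop-Phi}) and of $\om$ yields $f\in K$ with $\Phi(\om(f))=u$, whence $G(h+f)=\phi$. (b) If $\om$ is an isomorphism then $\om'$ is injective by restriction; given $e\in 2\cdot\Ext$, surjectivity of $\om$ gives $f\in K$ with $\om(f)=e$, and then $G(f)=\xi\Phi(e)=0$ places $f$ in $L$ with $\om'(f)=e$. (c) If additionally $2\cdot\Ext=0$, then by (b) $L$ maps isomorphically to $0$, so $G$ is injective. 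The main obstacle is the TR3 step identifying $\Phi(\om(f))(a)$ with $w\bmod 2$, where one must unwind the Snake-Lemma definition of $\Phi$ and reconcile it with the geometric definition of $\om$ in terms of the extension $E(f)$; everything after that is routine.
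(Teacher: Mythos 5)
Your proposal is correct and follows essentially the same route as the paper: the whole proposition is reduced to the identity $G(f)=\xi(\Phi(\om(f)))$ on $K(X,Y)$, verified by extending a commutative square to a morphism of triangles and unwinding the Snake-Lemma description of $\Phi$, after which the pullback statements and (a)--(c) are formal. The only cosmetic difference is that you apply TR3 to the square built from $fg=w\bt$ to produce $m\:S^1\to Cf$, whereas the paper enters the same diagram of cofibration sequences from the lift $z\:S\to\Sg^{-1}Cf$ and fills in $u\in\pi_1(Y)$; these are the same computation.
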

\begin{proof}
 The bottom right square is a pullback (with $\pi$ surjective) by
 Proposition~\ref{prop-pi}, and the top middle square is a pullback
 (with $\Phi$ surjective) by Proposition~\ref{prop-Phi}.  We next show
 that the bottom left rectangle commutes.  Consider an element
 $f\in K(X,Y)$, so $\pi G(f)=0$.  As $\xi$ is the kernel of $\pi$, we
 have $G(f)=\xi(s)$ for some $s\:\pi_0(X)[2]\to \pi_1(Y)/2$.  We need
 to show that $s=\Phi(\om(f))$.  Consider an element
 $w\in\pi_0(X)[2]$.  By inspecting the definitions, we see that $s(w)$
 can be described as follows: we choose any $v\:S/2\to X$ with
 $v\rho=w$, then choose any $u\in\pi_1(Y)$ with $u\bt=fv\:S/2\to Y$,
 then $s(w)$ is the coset $u+2\pi_1(Y)$.  The best way to choose $u$
 is as follows.  Let $h\:\Sg^{-1}Cf\to X$ be the connecting map.  As
 $\pi_0(f)=0$ we have $fw=0$, so we can choose $z\:S\to\Sg^{-1}Cf$
 with $hz=w=v\rho$.  This gives the solid part of the diagram below,
 in which the rows are cofibration sequences:
 \[ \xymatrix{
   S \ar[r]^\rho \ar[d]_z &
   S/2 \ar[r]^\bt \ar[d]_v & 
   S^1 \ar[r]^2 \ar@{.>}[d]_u &
   S^1 \ar[d]^{\Sg z} \ar[r]^{\Sg\rho} &
   S^1/2 \ar[d]^{\Sg v} \\
   \Sg^{-1}Cf \ar[r]_h &
   X \ar[r]_f & 
   Y \ar[r]_g &
   Cf \ar[r]_{\Sg h} &
   \Sg X.
 } \]
 By the axioms for a triangulated category, there is a map $u$ making
 everything commute.  In particular we have $u\bt=fv$ so $u$
 represents $s(w)$.  On the other hand, we can also apply $\pi_1$ to
 obtain a commutative diagram
 \[ \xymatrix{
     \Z \mar[r]^2 \ar[d]_u & \Z \ear[r] \ar[d]_z & \Z/2 \ar[d]^w \\
     \pi_1(Y) \mar[r] & \pi_1(Cf) \ear[r] & \pi_0(X)
 } \]
 from which we can read off the fact that $\Phi(\om(f))(w)$ is
 represented by $u$.  Thus $\xi\Phi\om=G|_{K(X,Y)}$ as claimed.
 Moreover, we have $f\in L(X,Y)$ iff $G(f)=0$ iff $\xi\Phi\om(f)=0$
 iff $\Phi\om(f)=0$ iff
 $\om(f)\in\ker(\Phi)=2.\Ext(\pi_0(X),\pi_1(Y))$.  We can thus define
 $\om'$ to be the restriction of $\om$, and this makes the top left
 square a pullback.  It also follows formally from the definition
 of $K(X,Y)$ that the bottom left rectangle is a pullback.  The
 auxilary statements~(a), (b) and~(c) now follow easily by chasing the
 diagram.  
\end{proof}

\begin{lemma}\label{lem-Q}
 There is a finite spectrum $Q$ fitting in a diagram as follows, in
 which the rows and columns are cofibration sequences.
 \[ \xymatrix{
     & S \ar@{=}[r] \ar[d] & 
     S \ar[d]^\tht \\
     S^2 \ar@{=}[d] \ar[r]^\lm &
     S/\eta \ar[d]_\dl \ar[r] &
     Q \ar[r] \ar[d] &
     S^3 \ar@{=}[d] \\
     S^2 \ar[r]_2 &
     S^2 \ar[d]_\eta \ar[r]_\rho &
     S^2/2 \ar[d]^\xi \ar[r]_\bt &
     S^3 \\ &
     S^1 \ar@{=}[r] &
     S^1.
    }
 \]
 Moreover, the unit map $S\to H$ factors uniquely through
 $\tht\:S\to Q$, and the fibre of the resulting map $Q\to H$ is
 $2$-connected. 
\end{lemma}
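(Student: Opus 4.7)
The plan is to construct $Q$ as the cofibre of a map $\lm\:S^2\to S/\eta$ arising from $2\eta=0$, use the octahedral axiom to build the rest of the diagram, and then verify the factorization and connectivity claims by short homotopy calculations.

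Since $(-\eta)\circ 2=0$ and $S\to S/\eta\xra{\dl}S^2\xra{-\eta}S^1$ is a cofibration, the map $2\:S^2\to S^2$ factors as $\dl\lm$ for some $\lm\:S^2\to S/\eta$. Let $Q$ be the cofibre of $\lm$; this gives row~2. Applying the octahedral axiom to the composable pair $S^2\xra{\lm}S/\eta\xra{\dl}S^2$ (composite $2$) supplies the map $v\:Q\to S^2/2$, the map $\xi\:S^2/2\to C\dl=S^1$, the cofibration $Q\to S^2/2\to S^1$, and the commutativities of the central $2\tm 2$ block of squares. Row~3 is the standard cofibration for $S^2/2$; suspending $S^1\xra{\eta}S\to S/\eta\to S^2$ gives column~2; and rotating $Q\to S^2/2\to S^1$ one step backward produces $\tht\:S=\Sg^{-1}S^1\to Q$, giving column~3. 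For the remaining top-right square, one checks that $\tht$ equals the composite $u\circ i\:S\to S/\eta\to Q$ as follows: $v\circ u\circ i=\rho\circ\dl\circ i=0$ since $\dl i=0$, so $u\circ i=n\tht$ for some $n\in\pi_0(S)=\Z$, and applying $\pi_0$ (where every map in sight is the canonical identification $\Z=\Z$) forces $n=1$.

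For the second half, the cofibration $S\xra{2}S\to S/2\to S^1$ together with $\pi_k(H)=\Z$ concentrated in degree $0$ gives $[S/2,H]=[S^2/2,H]=0$. Applying $[-,H]$ to $S\xra{\tht}Q\to S^2/2\to S^1$ then yields an isomorphism $\tht^*\:[Q,H]\xra{\sim}[S,H]=\Z$, so the unit $S\to H$ lifts uniquely to $\bar e\:Q\to H$. To see that the fibre of $\bar e$ is $2$-connected, compute $\pi_*(S/\eta)$ from $S^1\xra{\eta}S\to S/\eta\to S^2$: the relations $\eta\cdot 1=\eta$ and $\eta\cdot\eta=\eta^2\neq 0$ give $\pi_0(S/\eta)=\Z$, $\pi_1(S/\eta)=0$, and $\pi_2(S/\eta)$ sits inside $\pi_2(S^2)=\Z$ as $2\Z$ via $\dl_*$. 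The relation $\dl_*\lm_*=2$ then forces $\lm_*$ to be an isomorphism on $\pi_2$, so row~2 gives $\pi_0(Q)=\Z$, $\pi_1(Q)=0$, $\pi_2(Q)=0$. Since $\bar e$ induces the identity on $\pi_0$ (inherited from the unit via $\tht$), the long exact sequence of the fibre sequence $F\to Q\xra{\bar e}H$ immediately forces $\pi_k(F)=0$ for $k\le 2$.

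The main technical nugget is the identification of $\lm_*$ as an isomorphism on $\pi_2$, which underpins the vanishing of $\pi_2(Q)$ and hence the $2$-connectivity of the fibre; the remaining work is routine octahedral bookkeeping and standard low-dimensional homotopy.
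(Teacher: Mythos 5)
Your proposal is correct and follows essentially the same route as the paper: define $\lm$ via $\dl\lm=2$ (the paper gets this from $\pi_2(S/\eta)=\Z\lm$, you from $\eta\cdot 2=0$ and the cofibration of $\dl$ --- the same fact), set $Q=C\lm$, apply the octahedral axiom for the third column, kill $[S^2/2,H]$ to get the unique factorization, and compute $\pi_{\le 2}(Q)=(\Z,0,0)$ for the connectivity of the fibre. Your extra checks (the top-right square via $n=\pm1$, and $\lm_*$ being an isomorphism on $\pi_2$) are just more explicit versions of steps the paper leaves implicit.
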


This is a fairly well-known example in low-dimensional stable homotopy
theory, and is often characterised by the ``cell diagram''
\[ \xymatrix @!R @!C {
 \bullet \ar@/^1ex/[rr]^{\text{Sq}^2} &
 \cdot & 
 \bullet \ar@/^1ex/[r]^{\text{Sq}^1} &
 \bullet
}\]
However, it will be simpler to prove the precise properties that we
need rather than trying to extract them from the literature.

\begin{proof}
 The third row is a familiar cofibration sequence.  We define $S/\eta$
 to be the cofibre of the map $\eta\:S^1\to S$.  (This can also be
 described as $\Sg^{-2}\C P^2$.)  We let $\dl$ denote the connecting
 map, so that the second column in the diagram is also a cofibration
 sequence.  Using this cofibration sequence and our knowledge of
 $\pi_{\leq 2}(S)$, we see that there is a unique element
 $\lm\in\pi_2(S/2)$ with $\dl\lm=2$, so that the left square commutes.
 We also find that $\pi_0(S/\eta)=\Z$ and $\pi_1(S/\eta)=0$ and
 $\pi_2(S/\eta)=\Z\lm$.

 Next, we define $Q$ to be the cofibre of $\lm$, so the second row is
 another cofibration sequence.  The octahedral axiom now tells us that
 the third column can be filled in as claimed.  If we apply the
 functor $[-,H]$ to that column we see that the unit map $S\to H$
 factors uniquely through $\tht$.  We can also apply $\pi_*$ to the
 second row to see that $\pi_0(Q)=\Z$ and $\pi_1(Q)=\pi_2(Q)=0$.  Now
 let $F$ be the fibre of the map $Q\to H$.  We then have $\pi_0(F)=0$
 and $\pi_k(F)=\pi_k(Q)$ for $k>0$, so in particular
 $\pi_1(F)=\pi_2(F)=0$ as claimed.
\end{proof}
\begin{remark}\label{rem-Q-geometric}
 It is possible to give a more geometric construction, as follows.
 Let $\H$ denote the space of quaternions, and $\H_0$ the subspace of
 purely imaginary ones.  There are evident maps
 \[ P_\R(\H\oplus\H_0) \to P_\R(\H\oplus\H) \to P_\H(\H\oplus\H) 
     \simeq\H_{\infty}\simeq S^4.
 \]
 The composite sends $P_\R(0\oplus\H_0)$ to the basepoint.  Thus, we
 have an induced map $f$ from the quotient space
 $P=P_\R(\H\oplus\H_0)/P_\R(0\oplus\H_0)\simeq\R P^6/\R P^2$ to $S^4$.
 We put $Q=\Sg^{-4}Cf$.  

 We have $H^*(P;\Z/2)=(\Z/2)\{a_3,a_4,a_5,a_6\}$ with $\Sq^1(a_3)=a_4$
 and $\Sq^1(a_5)=a_6$ and $\Sq^2(a_3)=a_5$.  One can check that $f$
 restricts to give a map $P_\R(\C\oplus\H_0)/P_\R(0\oplus H_0)\to S^4$
 that is generically bijective, and therefore sends the generator of
 $H^4(S^4;\Z/2)$ to $a_4$.  It follows that $H^*(Q;\Z/2)$ has the form
 $(\Z/2)\{b_0,b_2,b_3\}$ with $|b_i|=i$ and $\Sq^2(b_0)=b_2$ and
 $\Sq^1(b_2)=b_3$.  From this one can deduce that $Q$ fits into a
 system of cofibration sequences as indicated.  We leave further
 details of this approach to the reader.
\end{remark}

We next recall a standard fact for ease of reference.
\begin{lemma}\label{lem-connective-hurewicz}
 Let $X$ be a connective spectrum.
 \begin{itemize}
  \item[(a)] If $H_*(X)=0$ then $X$ is contractible.
  \item[(b)] If $i$ is the smallest integer such that $H_i(X)\neq 0$,
   then the Hurewicz map $\pi_i(X)\to H_i(X)$ is an isomorphism.
 \end{itemize}
\end{lemma}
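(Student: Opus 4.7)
The plan is to prove~(b) first and deduce~(a) as an immediate consequence. For~(a), suppose for contradiction that $X$ is connective and non-contractible with $H_*(X)=0$. Since $X$ is connective and nonzero, there is a smallest integer $i$ with $\pi_i(X)\neq 0$, and $X$ is then $(i-1)$-connected. Part~(b) would then give $H_i(X)\simeq\pi_i(X)\neq 0$, contradicting $H_*(X)=0$.

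For~(b), let $A=\pi_i(X)$. The standard Postnikov construction produces a map $p\:X\to\Sg^i HA$ that is the identity on $\pi_i$. Its homotopy fibre $F$ fits in a cofibre sequence $F\to X\xra{p}\Sg^i HA$, and the long exact sequence in $\pi_*$ (using that $\pi_{i+1}(\Sg^i HA)=0$ and $\pi_{i-1}(X)=0$) shows that $F$ is $i$-connected, i.e., $\pi_j(F)=0$ for $j\leq i$.  The key geometric step is to promote this to the statement that $H_j(F)=0$ for $j\leq i$.

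For this I would invoke the standard connectivity estimate for smash products of connective spectra: if $Y$ is $m$-connected and $Z$ is $n$-connected, then $Y\Smash Z$ is $(m+n+1)$-connected.  Applied to $F\Smash H\Z$ with $m=i$ and $n=-1$, this gives $H_j(F)=\pi_j(F\Smash H\Z)=0$ for $j\leq i$, as required.  The long exact sequence in homology for the cofibre sequence then forces $H_i(X)\to H_i(\Sg^i HA)=A$ to be an isomorphism.  Naturality of the Hurewicz map with respect to $p$, together with the fact that the Hurewicz map of $\Sg^i HA$ is the canonical identification of $\pi_i$ with $H_i$, identifies this isomorphism with the Hurewicz map of $X$.

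The only non-formal input is the smash-product connectivity estimate, which is the main potential obstacle.  However, this is a well-known consequence of realising $F$ as a CW spectrum with cells in dimensions $>i$ and noting that smashing with the connective spectrum $H\Z$ preserves the connectivity bound cell by cell; I would therefore simply cite it rather than re-derive it here.
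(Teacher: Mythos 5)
Your core argument is a legitimate (and more self-contained) route: the paper simply cites the Hurewicz theorem for connective spectra and bootstraps it degree by degree, whereas you re-derive the Hurewicz isomorphism from the smash-product connectivity estimate via the fibre of $X\to\Sg^iHA$. That derivation is correct \emph{as a proof of the classical Hurewicz theorem}, i.e.\ under the hypothesis that $X$ is $(i-1)$-connected in the sense that $\pi_j(X)=0$ for $j<i$.

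However, there is a genuine gap in how you address the statement actually being proved. In part~(b) the index $i$ is defined as the smallest integer with $H_i(X)\neq 0$, not the smallest integer with $\pi_i(X)\neq 0$, and nothing in the hypothesis tells you a priori that $\pi_j(X)=0$ for $j<i$. Your very first step --- producing a map $p\:X\to\Sg^iHA$ with $A=\pi_i(X)$ that is the identity on $\pi_i$ --- requires exactly this $(i-1)$-connectivity: the Postnikov truncation $\tau_{\leq i}X$ is only $\Sg^iH\pi_i(X)$ when the lower homotopy groups vanish. Bridging the homological hypothesis to the homotopical one is precisely the inductive bootstrap that constitutes the paper's entire proof (if $n$ is minimal with $\pi_n(X)\neq 0$ and $n<i$, then Hurewicz gives $\pi_n(X)\simeq H_n(X)=0$, a contradiction, so $n\geq i$). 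The same mismatch infects your deduction of~(a) from~(b): when $H_*(X)=0$ there is no smallest $i$ with $H_i(X)\neq 0$, so~(b) as stated is vacuous there; what you actually invoke is the homotopy-indexed Hurewicz isomorphism. The fix is to present your fibre-sequence argument as a proof of the classical Hurewicz theorem (indexed by connectivity), and then add the one-line induction to transfer from homotopy-indexed to homology-indexed vanishing; without that added step the proof of~(b) does not establish the stated claim.
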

\begin{proof}
 First suppose that $H_*(X)=0$.  By assumption $X$ is connective, so
 there exists $n$ such that $\pi_j(X)=0$ for all $j<n$.  The Hurewicz
 theorem then tells us that $\pi_n(X)\simeq H_n(X)=0$.  This allows us
 to apply the Hurewicz theorem one dimension higher, so
 $\pi_{n+1}(X)\simeq H_{n+1}(X)=0$.  We can continue this inductively
 to see that $\pi_i(X)=0$ for all $X$, so $X$ is contractible.
 The proof for~(b) is essentially the same.
\end{proof}
\begin{remark}
 Connectivity is certainly required here; for example, the spectrum
 $X=KU\Smash S/2$ is not contractible but has $H_*(X)=0$.
\end{remark}

\begin{lemma}\label{lem-H-one}
 Let $\CC$ be the category of $(-1)$-connected spectra $X$ with
 $H_k(X)=0$ for $k>1$.  Then for $X\in\CC$ there is a natural short
 exact sequence
 \[ \xymatrix{ 
     \pi_0(X)/2 \mar[r]^\eta &
     \pi_1(X) \ear[r]^h &
     H_1(X).
 } \]
 Moreover, the functor $G\:\CC\to\EEED$ reflects isomorphism.
\end{lemma}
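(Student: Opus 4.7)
The plan is to smash $X$ with the third column of the diagram of Lemma~\ref{lem-Q}, namely the cofibration $S\xra{\tht}Q\to S^2/2\xra{\xi}S^1$, and extract the claimed sequence from the resulting long exact sequence in homotopy groups of the cofibration $X\to X\Smash Q\to X\Smash S^2/2\xra{1_X\Smash\xi}\Sg X$.

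First I would compute $\pi_*(X\Smash Q)$ in low dimensions. Since the fibre of $Q\to H$ is $2$-connected by Lemma~\ref{lem-Q} and $X$ is $(-1)$-connected, the fibre of $X\Smash Q\to X\Smash H$ is $2$-connected, so the induced map on $\pi_k$ is an isomorphism for $k\le 1$ and a surjection for $k=2$. This identifies $\pi_1(X\Smash Q)$ with $H_1(X)$, and combined with the hypothesis $H_2(X)=0$ forces $\pi_2(X\Smash Q)=0$. Since $X\Smash S^2/2=\Sg^2(X/2)$ and the cofibration $X\xra{2}X\to X/2$ together with the $(-1)$-connectivity of $X$ gives $\pi_0(X/2)=\pi_0(X)/2$ and $\pi_{-1}(X/2)=0$, we also have $\pi_2(X\Smash S^2/2)=\pi_0(X)/2$ and $\pi_1(X\Smash S^2/2)=0$.

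Substituting these values into the long exact sequence reduces it to
\[ 0\to\pi_0(X)/2\xra{\xi_*}\pi_1(X)\to H_1(X)\to 0. \]
To finish the first part I would identify $\xi_*$ with $\eta$-multiplication: writing $\alpha\in\pi_0(X)$ and representing the corresponding element of $\pi_2(X\Smash S^2/2)\cong\pi_0(X\Smash S/2)$ as $\alpha\Smash\rho$, the connecting map sends it to $\alpha\cdot(\xi\rho)=\alpha\cdot\eta$ by the relation $\xi\rho=\eta$ read off from the commuting square at the bottom right of the diagram of Lemma~\ref{lem-Q}. Naturality of the short exact sequence in $X$ is automatic, since $Q$, $\xi$, and the fibre of $Q\to H$ are fixed objects of the stable homotopy category.

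For the ``reflects isomorphisms'' claim, suppose $f\:X\to Y$ in $\CC$ has $G(f)$ an isomorphism, so that $\pi_0(f)$ and $\pi_1(f)$ are isomorphisms. Naturality of the short exact sequence and the five-lemma then give that $H_1(f)$ is an isomorphism, while $H_0(f)=\pi_0(f)$ is so by Hurewicz and $H_k$ vanishes for $k>1$ on both $X$ and $Y$ by hypothesis. The cofibre of $f$ therefore has vanishing integral homology, and being connective it is contractible by Lemma~\ref{lem-connective-hurewicz}(a), so $f$ is an isomorphism. The main subtlety throughout is the identification of the connecting map with $\eta$-multiplication, but this is forced by the relation $\xi\rho=\eta$ built into the construction of $Q$.
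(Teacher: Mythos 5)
Your argument follows the paper's proof essentially step for step: smash $X$ with the third column of the diagram in Lemma~\ref{lem-Q}, use the $2$-connectivity of the fibre of $Q\to H$ to identify $\pi_k(Q\Smash X)$ with $H_k(X)$ in low degrees, read off the short exact sequence, and identify the connecting map via $\xi\rho=\eta$; the ``reflects isomorphisms'' argument is also identical. Your explicit justification that the connecting map is $\eta$-multiplication is a welcome addition, as the paper leaves that implicit.

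One step is mis-stated in a way that breaks the deduction as written. From the $2$-connectivity of $F\Smash X$ (where $F$ is the fibre of $Q\to H$) you conclude that $\pi_k(X\Smash Q)\to H_k(X)$ is ``an isomorphism for $k\le 1$ and a surjection for $k=2$,'' and then infer $\pi_2(X\Smash Q)=0$ from $H_2(X)=0$. But a surjection onto the zero group gives no control on the source, so $\pi_2(X\Smash Q)=0$ does not follow from what you stated. The connectivity estimate is in fact off by one: if $\pi_k(F\Smash X)=0$ for $k\le 2$, the long exact sequence gives an isomorphism $\pi_k(X\Smash Q)\to H_k(X)$ for all $k\le 2$ (and a surjection for $k=3$), which is what the paper asserts and what you actually need. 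With that correction the rest of your argument goes through.
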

\begin{proof}
 Let $Q$ be as in Lemma~\ref{lem-Q}, and let $F$ be the fibre of the
 map $Q\to H$.  As $X$ is $(-1)$-connected and $F$ is $2$-connected we
 see that $F\Smash X$ is $2$-connected and thus that the map
 $\pi_k(Q\Smash X)\to\pi_k(H\Smash X)=H_k(X)$ is an isomorphism for
 $k\leq 2$.

 Next, from the third column in Lemma~\ref{lem-Q} we have an exact
 sequence
 \[ \pi_2(Q\Smash X) \to \pi_1((S^1/2)\Smash X) \to 
     \pi_1(X) \to \pi_1(Q\Smash X) \to \pi_0((S^1/2)\Smash X)
 \]
 The first group here is $H_2(X)$, which is zero by assumption.  The
 second is $\pi_0((S/2)\Smash X)$, which is easily shown to be
 $\pi_0(X)/2$.  The fourth is $H_1(X)$, and the last is zero for
 connectivity reasons.  We thus have an exact sequence
 \[ 0 \to \pi_0(X)/2 \xra{\eta} \pi_1(X) \xra{} H_1(X) \to 0 \]
 as claimed.

 Now suppose we have a map $f\:X\to Y$ in $\CC$ such that $G(f)$ is an
 isomorphism.  This means that $\pi_0(f)=\al(G(f))$ and
 $\pi_1(f)=\gm(G(f))$ are isomorphisms, so $H_1(f)$ is also an
 isomorphism by our short exact sequence.  We also know that
 $H_0(f)=\pi_0(f)$ by the Hurewicz theorem, and $H_k$ vanishes for
 $k<0$ or $k>1$, so $H_*(f)$ is an isomorphism in all degrees.  As $X$
 and $Y$ are connective this means that $f$ is an equivalence.  Thus,
 the functor $G\:\CC\to\EEED$ reflects isomorphism.
\end{proof}

\section{Moore spectra}
\label{sec-moore-spectra}

\begin{lemma}\label{lem-moore-homology}
 If $X$ is a Moore spectrum then the Hurewicz map $\pi_0(X)\to H_0(X)$
 is an isomorphism, and $H_k(X)=0$ for all $k\neq 0$.
\end{lemma}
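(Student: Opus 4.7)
My plan is to derive both assertions from Lemma~\ref{lem-connective-hurewicz}, which packages the Hurewicz theorem in exactly the form needed. The key observation is that a Moore spectrum $X$ is by definition connective ($\pi_k(X)=0$ for $k<0$), so the lemma applies, and we additionally know by definition that $H_k(X)=0$ for $k>0$; thus the only work is to control homology in non-positive degrees.

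First I would dispose of the trivial case: if $H_*(X)=0$ identically, then Lemma~\ref{lem-connective-hurewicz}(a) gives $X\simeq *$, so $\pi_0(X)=0=H_0(X)$ and the Hurewicz map is an isomorphism vacuously. Otherwise, let $i$ be the smallest integer with $H_i(X)\neq 0$; this exists since $X$ is connective (so $H_k(X)=0$ for $k\ll 0$, which I would justify either by a direct iteration of Hurewicz or by noting that $H\mathbb{Z}\wedge X$ is connective). By the Moore hypothesis $H_k(X)=0$ for $k>0$, so $i\leq 0$. Lemma~\ref{lem-connective-hurewicz}(b) then gives an isomorphism $\pi_i(X)\xrightarrow{\simeq}H_i(X)\neq 0$, forcing $\pi_i(X)\neq 0$. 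But the connectivity hypothesis $\pi_k(X)=0$ for $k<0$ rules out $i<0$, so $i=0$.

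The two conclusions now follow: minimality of $i=0$ gives $H_k(X)=0$ for $k<0$, which combined with the Moore hypothesis $H_k(X)=0$ for $k>0$ yields $H_k(X)=0$ for all $k\neq 0$; and Lemma~\ref{lem-connective-hurewicz}(b) at $i=0$ gives the Hurewicz isomorphism $\pi_0(X)\xrightarrow{\simeq}H_0(X)$. There is no real obstacle here: everything is an immediate consequence of the already-established connective Hurewicz lemma together with the defining vanishing conditions on $\pi_*(X)$ and $H_*(X)$ for a Moore spectrum. The only mild subtlety is the case split on whether $H_*(X)$ is identically zero, which is needed so that ``the smallest $i$ with $H_i(X)\neq 0$'' makes sense before invoking part~(b) of the cited lemma.
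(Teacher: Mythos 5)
Your proof is correct and rests on the same tool as the paper's, namely the Hurewicz theorem for connective spectra. The paper's own proof is more direct: it applies the Hurewicz theorem in the homotopy-to-homology direction --- $\pi_k(X)=0$ for $k<0$ immediately gives $H_k(X)=0$ for $k<0$ together with the isomorphism $\pi_0(X)\simeq H_0(X)$ --- and then the defining condition $H_k(X)=0$ for $k>0$ finishes the argument in two lines. Routing everything through Lemma~\ref{lem-connective-hurewicz}(b), which is phrased in terms of the lowest nonvanishing \emph{homology} group, forces you into the case split on whether $H_*(X)=0$ and the small minimality argument to pin down $i=0$ (and you must separately justify that the minimal $i$ exists, e.g.\ via connectivity of $H\Z\Smash X$); all of that is sound, but it is extra machinery for a statement that follows in one step from the standard form of Hurewicz.
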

\begin{proof}
 We have $\pi_k(X)=0$ for $k<0$ by assumption, so the Hurewicz theorem
 tells us that $H_k(X)=0$ for $k<0$ and that $\pi_0(X)\simeq H_0(X)$.
 We also have $H_k(X)=0$ for $k>0$ by assumption.
\end{proof}
\begin{corollary}\label{cor-reflects}
 The functor $\pi_0\:\Moore\to\Ab$ reflects isomorphism.
\end{corollary}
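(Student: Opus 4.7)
The plan is to deduce this directly from Lemma~\ref{lem-moore-homology} together with Lemma~\ref{lem-connective-hurewicz}. Suppose $f \: X \to Y$ is a morphism in $\Moore$ such that $\pi_0(f)$ is an isomorphism. I want to show that $f$ is an isomorphism in the homotopy category, i.e.\ an equivalence of spectra.

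First I would observe that $H_*(f)$ is an isomorphism in every degree. In degree zero this follows from the commutative square
\[ \xymatrix{
    \pi_0(X) \ar[r]^{\pi_0(f)}_\simeq \ar[d]_\simeq & \pi_0(Y) \ar[d]^\simeq \\
    H_0(X) \ar[r]_{H_0(f)} & H_0(Y)
   }
\]
whose vertical Hurewicz maps are isomorphisms by Lemma~\ref{lem-moore-homology}. In every other degree, both $H_k(X)$ and $H_k(Y)$ vanish by the same lemma, so $H_k(f)$ is trivially an isomorphism.

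Now let $C$ be the cofibre of $f$, fitting in a cofibration sequence $X \xra{f} Y \to C \to \Sg X$. Since both $X$ and $Y$ are $(-1)$-connected, $C$ is connective (indeed $(-1)$-connected). The long exact sequence in homology, together with the fact that $H_*(f)$ is an isomorphism, shows that $H_*(C) = 0$. By Lemma~\ref{lem-connective-hurewicz}(a), $C$ is contractible, and therefore $f$ is an equivalence. The main (and only) content is the reduction to a homology statement, which is immediate from the defining properties of a Moore spectrum; there is no serious obstacle here.
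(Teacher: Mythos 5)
Your proof is correct and follows essentially the same route as the paper: deduce from Lemma~\ref{lem-moore-homology} that $H_*(f)$ is an isomorphism, pass to the cofibre, and apply Lemma~\ref{lem-connective-hurewicz} to conclude that the cofibre is contractible. The only difference is that you spell out the Hurewicz naturality square explicitly, which the paper leaves implicit.
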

\begin{proof}
 Suppose that $f\:X\to Y$ is a morphism of Moore spectra such that
 $\pi_0(f)$ is an isomorphism, and let $Cf$ be the cofibre.  Using the
 lemma we see that $H_*(f)$ is an isomorphism, so $H_*(Cf)=0$.  As $X$
 and $Y$ are connective, the same is true of $Cf$.
 Lemma~\ref{lem-connective-hurewicz} therefore tells us that $Cf$ is
 contractible, so $f$ is an equivalence.
\end{proof}

\begin{definition}\label{defn-free-moore}
 A \emph{free Moore spectrum} is a Moore spectrum $X$ for which the
 group $\pi_0(X)$ is free.  We write $\FreeMoore$ for the category of
 such spectra.
\end{definition}

\begin{lemma}\label{lem-free-moore}
 \begin{itemize}
  \item[(a)] The functor $\pi_0\:\FreeMoore\to\FreeAb$ is an
   equivalence.
  \item[(b)] Every free Moore spectrum is a wedge of some family of
   copies of $S$.
  \item[(c)] If $X$ is a free Moore spectrum, then the map
   $\pi_0\:[X,Y]\to\Hom(\pi_0(X),\pi_0(Y))$ is an isomorphism for all
   $Y$, as is the map $\eta\:\pi_0(X)/2\to\pi_1(X)$.
 \end{itemize}
\end{lemma}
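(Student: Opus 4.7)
The plan is to establish (b) first and then deduce (a) and (c) from it as essentially formal consequences.

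For (b), let $X$ be a free Moore spectrum and put $A=\pi_0(X)$, with chosen basis $\{a_i\}_{i\in I}$. Each $a_i\in\pi_0(X)=[S,X]$ is a map $S\to X$, and these assemble to a single map
\[ \phi\:\bigWedge_I S \to X. \]
By Lemma~\ref{lem-moore-homology}, the Hurewicz map gives $H_0(X)\simeq A$ and $H_k(X)=0$ for $k\neq 0$; by the same lemma (applied also to $\bigWedge_I S$, which is a free Moore spectrum tautologically) we have $H_*(\bigWedge_I S)$ concentrated in degree zero with value $\bigoplus_I\Z$. By construction $H_0(\phi)$ matches the two bases, so $\phi$ is a homology isomorphism. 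Then $\cok(\phi)$ (or rather, the cofibre of $\phi$) is connective with vanishing homology, so by Lemma~\ref{lem-connective-hurewicz}(a) it is contractible, and $\phi$ is an equivalence.

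Once (b) is known, (c) is a short computation. Writing $X\simeq\bigWedge_I S$, the universal property of wedges in the stable homotopy category gives
\[ [X,Y] \simeq \prod_{i\in I}[S,Y] = \prod_{i\in I}\pi_0(Y), \]
while $\Hom(\pi_0(X),\pi_0(Y))=\Hom(\bigoplus_I\Z,\pi_0(Y))\simeq\prod_I\pi_0(Y)$, and the map induced by $\pi_0$ is compatible with these identifications. For the second claim, $\pi_0(X)/2\simeq\bigoplus_I\Z/2$ and $\pi_1(X)\simeq\bigoplus_I\pi_1(S)=\bigoplus_I(\Z/2)\eta$, and multiplication by $\eta$ gives the evident isomorphism.

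Finally (a) follows: essential surjectivity is witnessed by sending a free abelian group $A$ with chosen basis $I$ to $\bigWedge_I S$, and fully faithfulness is the special case of (c) in which $Y$ is itself a free Moore spectrum. There is no real obstacle here; the only point requiring minor care is the handling of possibly infinite wedges, which is standard in $\Spectra$.
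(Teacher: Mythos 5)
Your proposal is correct and follows essentially the same route as the paper: both construct the map $\bigWedge_I S\to X$ from a chosen basis of $\pi_0(X)$, show it is a homology isomorphism whose connective cofibre is therefore contractible, and then deduce (c) from the wedge decomposition and (a) as the special case of (c) where $Y$ is itself a free Moore spectrum. The only cosmetic difference is that you spell out the product identifications for (c) where the paper invokes closure of the class of spectra satisfying (c) under coproducts; these are the same argument.
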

\begin{proof}
 Let $X$ be a free Moore spectrum.  Choose a family of maps
 $\{g_i\:S\to X\}_{i\in I}$ giving a basis for the free abelian group
 $\pi_0(X)$.  Put $Q=\bigWedge_IS$ and let $g\:Q\to X$ be given by
 $g_i$ on the $i$'th summand.  Let $Y$ be the cofibre of $g$, so $Y$
 is connective.  By construction $g$ gives an isomorphism from
 $\pi_0(Q)=H_0(Q)$ to $\pi_0(X)=H_0(X)$, and both $Q$ and $X$ have
 homology concentrated in degree zero, so $H_*(Y)=0$.  We thus have
 $Y=0$, so $g$ is an equivalence.  This proves~(b).  Next, the
 category of those spectra for which~(c) holds clearly contains $S$
 and is closed under coproducts, so it contains $Q$ and therefore
 $X$.  By specialising to the case where $Y$ is also a free Moore
 spectrum we obtain~(a).
\end{proof}

\begin{definition}\label{defn-presentation}
 A \emph{presentation} of a Moore spectrum $X$ is a cofibration
 sequence $P\xra{f}Q\xra{g}X\xra{h}\Sg P$, where both $P$ and $Q$ are
 free Moore spectra.
\end{definition}
\begin{remark}
 As $H_1(X)=0$ by the definition of a Moore spectrum, we find that
 the map $\pi_0(f)=H_0(f)$ must be injective.  
\end{remark}

\begin{lemma}\label{lem-presentation}
 Every Moore spectrum admits a presentation.
\end{lemma}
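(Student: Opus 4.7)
The plan is to build the presentation by choosing a surjection from a wedge of spheres onto $\pi_0(X)$ and then showing that its fibre is automatically a free Moore spectrum.

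First I would pick a generating set $\{x_i\}_{i\in I}$ for the abelian group $\pi_0(X)$, represent each $x_i$ by a map $S\to X$, and assemble these into a map $g\:Q\to X$ where $Q=\bigWedge_I S$. By Lemma~\ref{lem-free-moore}, $Q$ is a free Moore spectrum with $\pi_0(Q)$ the free abelian group on $I$, and by construction $\pi_0(g)$ is surjective. I then let $P$ be the fibre of $g$, giving a cofibration sequence $P\xra{f}Q\xra{g}X\xra{h}\Sg P$. The task reduces to showing that $P$ is a free Moore spectrum.

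For the homotopy conditions, I would run the long exact sequence of the fibration. Since $\pi_k(Q)=0$ for $k<0$ and $\pi_k(X)=0$ for $k<0$, the sequence
\[ \pi_0(Q)\xra{\pi_0(g)}\pi_0(X)\to\pi_{-1}(P)\to\pi_{-1}(Q)=0 \]
identifies $\pi_{-1}(P)$ with $\cok(\pi_0(g))=0$, and the analogous argument in negative degrees gives $\pi_k(P)=0$ for all $k<0$. Next I would run the homology long exact sequence of the cofibration $P\to Q\to X$. Both $Q$ (being a wedge of spheres) and $X$ (being a Moore spectrum, via Lemma~\ref{lem-moore-homology}) have homology concentrated in degree zero, so the piece
\[ 0=H_{k+1}(X)\to H_k(P)\to H_k(Q)=0 \]
forces $H_k(P)=0$ for all $k\geq 1$, while $H_k(P)=0$ for $k<0$ follows from connectivity of $P$ (Lemma~\ref{lem-connective-hurewicz}) once we know it is $(-1)$-connected.

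It remains to check that $\pi_0(P)$ is free abelian. The tail of the homology sequence reads
\[ 0=H_1(X)\to H_0(P)\to H_0(Q)\xra{H_0(g)}H_0(X)\to 0, \]
so $H_0(P)$ is a subgroup of the free abelian group $H_0(Q)=\pi_0(Q)$, hence itself free. Combined with $\pi_k(P)=0$ for $k<0$ and $H_k(P)=0$ for $k\neq 0$, the Hurewicz isomorphism $\pi_0(P)\cong H_0(P)$ from Lemma~\ref{lem-connective-hurewicz}(b) shows that $P$ is a Moore spectrum with free $\pi_0$, i.e.\ a free Moore spectrum by Definition~\ref{defn-free-moore}. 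There is no genuine obstacle here: everything is driven by choosing $g$ to hit $\pi_0(X)$ and then reading off the long exact sequences; the only point requiring a small amount of care is the verification that $H_0(P)$ sits inside the free group $\pi_0(Q)$, which is immediate from the fact that $H_1(X)$ vanishes.
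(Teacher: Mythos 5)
Your proof is correct and follows essentially the same route as the paper: choose a surjection from a free abelian group onto $\pi_0(X)$, realize it by a map $g\:Q\to X$ from a free Moore spectrum via Lemma~\ref{lem-free-moore}, take the (co)fibre $P$, and read off from the long exact sequences in homotopy and homology that $P$ is a Moore spectrum with $\pi_0(P)=\ker(\pi_0(g))$ a subgroup of a free abelian group, hence free. The paper's version is merely terser, leaving the long-exact-sequence bookkeeping implicit.
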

\begin{proof}
 Let $X$ be a Moore spectrum.  Choose a free abelian group $Q_0$ and a
 surjective map $g_0\:Q_0\to\pi_0(X)$.  By Lemma~\ref{lem-free-moore}
 we can find a free Moore spectrum $Q$ with $\pi_0(Q)=Q_0$, and a map
 $g\:Q\to X$ with $\pi_0(g)=g_0$.  We can then form a cofibration
 sequence $P\xra{f}Q\xra{g}X\xra{h}\Sg P$ for some spectrum $P$.  This
 gives long exact sequences of homotopy and homology groups, showing
 that $P$ is a Moore spectrum with $\pi_0(P)=\ker(g_0)$.  This is a
 subgroup of the free abelian group $Q_0$, so it is again free, so $P$
 is a free Moore spectrum as required.
\end{proof}

\begin{lemma}\label{lem-F}
 If $X$ is a Moore spectrum then the map $\eta\:\pi_0(X)/2\to\pi_1(X)$
 is an isomorphism.  Thus, the diagram $G(X)$ lies in the subcategory
 $\EMD'\sse\EEED$.  The corresponding exact Moore diagram
 $F(X)=E^{-1}G(X)$ is 
 \[ \xymatrix{ 
     [S,X] \ar@/^1ex/[rr]^{(\eta\bt)^*} & &
     [S/2,X] \ar@/^1ex/[ll]^{\rho^*}
    }
 \]
 as in Theorem~\ref{thm-moore-strong}.
\end{lemma}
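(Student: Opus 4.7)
The plan is to reduce the claim about $\eta\:\pi_0(X)/2\to\pi_1(X)$ to the already-proved Lemma~\ref{lem-H-one}, and then unwind the definitions of $G$, $E$, and the Yoneda embedding to verify the stated form of $F(X)$.

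First, I would note that Lemma~\ref{lem-moore-homology} gives $H_k(X)=0$ for all $k\neq 0$ whenever $X$ is a Moore spectrum. In particular $X$ is $(-1)$-connected and $H_k(X)=0$ for $k>1$, so $X$ lies in the subcategory $\CC$ of Lemma~\ref{lem-H-one}. That lemma then supplies a natural short exact sequence
\[
\xymatrix{\pi_0(X)/2 \mar[r]^\eta & \pi_1(X) \ear[r]^h & H_1(X),}
\]
and since $H_1(X)=0$ the map $\eta\:\pi_0(X)/2\to\pi_1(X)$ is forced to be an isomorphism. Equivalently, the map $\eta^*\:[S,X]\to[S^1,X]$ is surjective with kernel $2[S,X]$, which is exactly the condition that $G(X)\in\EMD'\sse\EEED$ from Proposition~\ref{prop-EMD-EEED}.

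Next I would identify the maps in $G(X)$ with the ones named in Theorem~\ref{thm-moore-strong}. By definition $G(X)(x)=[T(x),X]$, and $T$ sends $\bt,\eta,\rho$ in $\CJ$ to the spectrum-level maps $\bt\:S/2\to S^1$, $\eta\:S^1\to S$ and $\rho\:S\to S/2$. Reading off the EED structure, the maps $\psi$, $\eta$, $\chi$ on $G(X)$ are $\rho^*$, $\eta^*$, $\bt^*$ respectively. Applying the explicit formula for the inverse equivalence in Proposition~\ref{prop-EMD-EEED},
\[
E^{-1}(B\xra{\psi}A\xra{\eta}C\xra{\chi}B)=(B\xra{\psi}A\xra{\chi\eta}B),
\]
we get $F(X)=E^{-1}G(X)$ with $\psi=\rho^*$ and $\phi=\chi\eta=\bt^*\eta^*=(\eta\bt)^*$, which is the stated diagram.

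There is no real obstacle here, since all the analytic content (the computation of $\pi_1$ via the spectrum $Q$ of Lemma~\ref{lem-Q} and the short exact sequence involving $\eta$) has already been handled in Lemma~\ref{lem-H-one}, and the rest is bookkeeping through the equivalences set up in Section~3. The only point that needs a tiny bit of care is checking the composition direction: that $\bt^*\eta^*(f)=f\eta\bt=(\eta\bt)^*(f)$ for $f\in[S,X]$, so that the composite $\chi\eta$ in the EEED translates to $(\eta\bt)^*$ in the Moore diagram notation of Theorem~\ref{thm-moore-strong}.
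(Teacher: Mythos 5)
Your proof is correct, but it reaches the key isomorphism $\eta\:\pi_0(X)/2\to\pi_1(X)$ by a genuinely different route from the paper. The paper's own proof stays entirely inside the Moore-spectrum machinery: it chooses a presentation $P\xra{f}Q\xra{g}X\xra{h}\Sg P$, observes that $\pi_1(X)=\cok\pi_1(f)$ and $\pi_0(X)/2=\cok(\pi_0(P)/2\to\pi_0(Q)/2)$ by right-exactness of $-\ot\Z/2$, and then transports the isomorphism $\eta\:\pi_0(-)/2\to\pi_1(-)$ from the free Moore spectra $P$ and $Q$ (Lemma~\ref{lem-free-moore}(c)) to $X$ by comparing cokernels. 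You instead observe that a Moore spectrum lies in the category $\CC$ of Lemma~\ref{lem-H-one} (using Lemma~\ref{lem-moore-homology}), so the natural short exact sequence $\pi_0(X)/2\xra{\eta}\pi_1(X)\to H_1(X)$ applies, and $H_1(X)=0$ forces $\eta$ to be an isomorphism. This is a legitimate and shorter argument given that Lemma~\ref{lem-H-one} is already in hand, and there is no circularity since that lemma is proved (via the spectrum $Q$ of Lemma~\ref{lem-Q}) before the Moore-spectrum section; indeed the paper itself acknowledges this alternative in the proof of Lemma~\ref{lem-F-reflects}. What your route buys is conceptual clarity ($H_1$ is exactly the obstruction to $\eta$ being onto); what the paper's route buys is independence from the more delicate construction of $Q$. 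Your unwinding of the identification $F(X)=E^{-1}G(X)$, with $\psi=\rho^*$ and $\phi=\chi\eta=(\eta\bt)^*$, matches the definitions precisely and is the same bookkeeping the paper leaves as ``the rest is clear from this.''
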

\begin{proof}
 Choose a presentation $P\xra{f}Q\xra{g}X\xra{h}\Sg P$.  This gives an
 exact sequence
 \[ \pi_1(P) \xra{\pi_1(f)} \pi_1(Q) \xra{} \pi_1(X) \xra{}
    \pi_0(P) \xra{\pi_0(f)} \pi_0(Q) \xra{} \pi_0(X) \xra{} 0.
 \]
 Here $\pi_0(f)$ is injective by the definition of a presentation, so
 $\pi_1(X)$ is the cokernel of $\pi_1(f)$, and also $\pi_0(X)$ is the
 cokernel of $\pi_0(f)$.  Tensoring with $\Z/2$ is right exact, so
 $\pi_0(X)/2$ is the cokernel of the map $\pi_0(P)/2\to\pi_0(Q)/2$
 induced by $\pi_0(f)$.  We thus have a commutative diagram with right
 exact rows:
 \[ \xymatrix{
     \pi_0(P)/2 \ar[r] \ar[d]_{\eta} & 
     \pi_0(Q)/2 \ar[r] \ar[d]_{\eta} & 
     \pi_0(X)/2        \ar[d]_{\eta} \\
     \pi_1(P) \ar[r] &
     \pi_1(Q) \ar[r] &
     \pi_1(X).
    }
 \]
 The first two vertical maps are isomorphisms by
 Lemma~\ref{lem-free-moore}(c), so the third one is also an
 isomorphism, as claimed.  The rest is clear from this.
\end{proof}

\begin{lemma}\label{lem-F-reflects}
 The functor $F\:\Moore\to\EMD$ is essentially surjective and
 reflects isomorphisms.
\end{lemma}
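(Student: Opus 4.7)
For reflecting isomorphisms, the argument will be essentially a single line: if $f\:X\to Y$ is a morphism of Moore spectra and $F(f)$ is an isomorphism in $\EMD$, then the additive functor $\al\:\EMD\to\Ab$ sends it to $\pi_0(f)$, which is therefore an isomorphism of abelian groups; Corollary~\ref{cor-reflects} then forces $f$ itself to be an isomorphism in $\Moore$.

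For essential surjectivity, the first thing to record is a purely formal consequence of Corollary~\ref{cor-pi-EMD}: since $\al\:\EMD\to\Ab$ is full and reflects isomorphisms, any two objects of $\EMD$ with isomorphic underlying abelian groups must themselves be isomorphic (lift an $\al$-level isomorphism via fullness, then reflect it). Thus given $M\in\EMD$ with $\al(M)=A$, it suffices to produce \emph{any} Moore spectrum $X$ with $\pi_0(X)\simeq A$, since then $F(X)$ and $M$ will have isomorphic images under $\al$ and hence be isomorphic in $\EMD$.

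To construct such an $X$, I will mimic the argument in the proof of Lemma~\ref{lem-presentation}. Choose any free resolution $0\to K\to J\to A\to 0$ of abelian groups. Lemma~\ref{lem-free-moore} yields free Moore spectra $M_K,M_J$ with $\pi_0(M_K)=K$ and $\pi_0(M_J)=J$, and realises the inclusion $K\to J$ as a map $g\:M_K\to M_J$; I then take $X$ to be the cofibre of $g$. The long exact sequences of homotopy and homology groups, combined with the fact that free Moore spectra are wedges of spheres and so have homology concentrated in degree zero, yield $\pi_k(X)=0$ for $k<0$, $\pi_0(X)\cong A$, and $H_k(X)=0$ for $k>0$, so $X$ is a Moore spectrum realising $A$ and we are done. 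I do not anticipate any substantive obstacles here; the mild conceptual point worth highlighting is the formal ``full plus reflects isos'' observation, which is what allows us to bypass any explicit analysis of the extension class $A/2\to B\to A[2]$.
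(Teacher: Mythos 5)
Your proposal is correct and follows essentially the same route as the paper: reflection of isomorphisms via $\al(F(f))=\pi_0(f)$ and Corollary~\ref{cor-reflects}, and essential surjectivity by realising a free resolution of $A=\al(M)$ as a cofibre sequence of free Moore spectra and then invoking Corollary~\ref{cor-pi-EMD} (fullness plus isomorphism-reflection of $\al\:\EMD\to\Ab$) to identify $F(X)$ with $M$. The only cosmetic difference is that you isolate the ``full plus reflects isomorphisms'' step as a standalone observation, whereas the paper applies it inline by lifting $1_A$.
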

\begin{proof}
 First, let $f\:X\to Y$ be a map of Moore spectra such that $F(f)$ is
 an isomorphism.  Then $\pi_0(f)=\al(F(f))$ is an isomorphism, so $f$
 is an isomorphism by Corollary~\ref{cor-reflects}.  This means that
 $F$ reflects isomorphism, as claimed.  (This could also be deduced
 from Lemma~\ref{lem-H-one}.)

 Now let $M=(A,B,\phi,\psi)$ be any exact Moore diagram.  Choose a
 surjective homomorphism $g_0\:Q_0\to A$ with $Q_0$ a free abelian
 group, and let $f_0\:P_0\to Q_0$ be the kernel of $g_0$.  Note that
 $P_0$ is a subgroup of the free abelian group $Q_0$ and so is free.
 We can thus find a map $f\:P\to Q$ of free Moore spectra with
 $\pi_0(P)=P_0$, $\pi_0(Q)=Q_0$ and $\pi_0(f)=f_0$.  Let $X$ be the
 cofibre of $f$; we find that $X$ is a Moore spectrum with
 $\pi_0(X)=A$, or in other words $\al(F(X))=\al(M)$.
 Corollary~\ref{cor-pi-EMD} tells us that the functor
 $\al\:\EMD\to\Ab$ is full, so we can choose $p\:F(X)\to M$ with
 $\al(p)=1_A$.  As $\al$ reflects isomorphism, we deduce that $p$ is
 an isomorphism.  This proves that $F$ is essentially surjective.
\end{proof}

\begin{proposition}\label{prop-moore-maps}
 Let $X$ be a Moore spectrum, and let $X'$ be an arbitrary spectrum.
 Then the map 
 \[ \pi G \: [X,X'] \to \ED(\pi G(X),\pi G(X')) \]
 is surjective.  The kernel of this map is $K(X,X')$, and the map
 $\om \: K(X,X')\to\Ext(\pi_0(X),\pi_1(X'))$ is a bijection.
\end{proposition}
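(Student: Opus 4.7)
The plan is to fix a presentation $P\xra{f}Q\xra{g}X\xra{h}\Sg P$ of the Moore spectrum $X$, provided by Lemma~\ref{lem-presentation}, and exploit Lemma~\ref{lem-free-moore}(c) which asserts that $[\Sg^k P,Y]$ and $[\Sg^k Q,Y]$ are computed by $\Hom$ on $\pi_k$.

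For surjectivity of $\pi G$, take a pair $(f_0,f_1)$ with $f_0\:\pi_0(X)\to\pi_0(X')$ and $f_1\:\pi_1(X)\to\pi_1(X')$ making $\eta'f_0=f_1\eta$. By Lemma~\ref{lem-free-moore}(c) the homomorphism $f_0\pi_0(g)\:\pi_0(Q)\to\pi_0(X')$ lifts to a map $\tilde\phi\:Q\to X'$, and since $\pi_0(\tilde\phi f)=f_0\pi_0(g)\pi_0(f)=0$ the composite $\tilde\phi f$ is null (again by Lemma~\ref{lem-free-moore}(c)), so $\tilde\phi=\phi g$ for some $\phi\:X\to X'$, necessarily with $\pi_0(\phi)=f_0$. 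Naturality of $\eta$ shows $\pi_1(\phi)\eta=\eta f_0=f_1\eta$, and since $\eta\:\pi_0(X)\to\pi_1(X)$ is surjective by Lemma~\ref{lem-F}, this forces $\pi_1(\phi)=f_1$, so $\pi G(\phi)=(f_0,f_1)$. The kernel of $\pi G$ is $K(X,X')$ tautologically by Definition~\ref{defn-K}.

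For the claim about $\om$, I would apply $[-,X']$ to the rotated cofibre sequence $Q\xra{g}X\xra{h}\Sg P\xra{\pm\Sg f}\Sg Q$, which together with Lemma~\ref{lem-free-moore}(c) yields an exact sequence
\[ \Hom(\pi_0(Q),\pi_1(X'))\xra{f_0^*}\Hom(\pi_0(P),\pi_1(X'))\xra{h^*}[X,X']. \]
I would then check that the image of $h^*$ equals $K(X,X')$: any $\phi\in K$ has $\pi_0(\phi g)=0$, hence $\phi g=0$, hence $\phi=\psi h$ for some $\psi\:\Sg P\to X'$; conversely for any $\psi$ the map $\psi h$ has $\pi_0=0$ automatically, and $\pi_1(\psi h)=\pi_1(\psi)\pi_1(h)=0$ because $\image(\pi_1(h))=\ker(\pi_0(f))=0$ (as $\pi_0(f)$ is injective). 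So $h^*$ induces an isomorphism from $\cok(f_0^*)$ to $K(X,X')$, and the source is the standard presentation of $\Ext(\pi_0(X),\pi_1(X'))$ using the free resolution $0\to\pi_0(P)\to\pi_0(Q)\to\pi_0(X)\to 0$.

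The crucial remaining point is that the resulting bijection agrees with $\om$. Here the key observation is that $h$ itself lies in $K(X,\Sg P)$ (since $\pi_0(\Sg P)=0$ and $\pi_1(h)=0$ as above), so Lemma~\ref{lem-om-natural} gives $\om(\psi h)=\pi_1(\psi)_*\om(h)=\bar\psi_*\om(h)$ where $\bar\psi=\pi_1(\psi)\in\Hom(\pi_0(P),\pi_1(X'))$. The cofibre of $h$ is $\Sg Q$ and the relevant map $\pi_1(\Sg P)\to\pi_1(\Sg Q)$ is (up to a sign arising from triangle rotation) the inclusion $f_0\:\pi_0(P)\to\pi_0(Q)$, so $\om(h)$ is (up to sign) the tautological class of the resolution in $\Ext(\pi_0(X),\pi_0(P))$. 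Pushing forward along $\bar\psi$ is exactly the standard procedure for turning $\bar\psi\in\Hom(\pi_0(P),\pi_1(X'))$ into an element of $\Ext(\pi_0(X),\pi_1(X'))$, so $\om\circ h^*$ equals the cokernel map, proving $\om$ is a bijection. The main obstacle I anticipate is bookkeeping the signs coming from triangle rotations and verifying that the identifications of $[\Sg P,X']$ and $[\Sg Q,X']$ in terms of $\Hom$ on $\pi_1$ are compatible with these signs; once that is carefully unpacked, the naturality argument via $h\in K(X,\Sg P)$ makes everything fall into place.
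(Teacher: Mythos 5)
Your proposal is correct and follows essentially the same route as the paper: choose a presentation $P\to Q\to X\to\Sg P$, use Lemma~\ref{lem-free-moore}(c) to identify the $[\Sg^kP,X']$ and $[\Sg^kQ,X']$ terms with $\Hom$ groups, identify $K(X,X')$ with $\cok\bigl(f_0^*\:\Hom(\pi_0(Q),\pi_1(X'))\to\Hom(\pi_0(P),\pi_1(X'))\bigr)\simeq\Ext(\pi_0(X),\pi_1(X'))$, and match this with $\om$ via the observation that $h\in K(X,\Sg P)$ with $\om(h)=[E]$ and Lemma~\ref{lem-om-natural}. The only cosmetic differences are that the paper routes the surjectivity claim through Lemma~\ref{lem-al} rather than your direct lifting argument, and the sign worry you flag is harmless since a global sign on the comparison map does not affect bijectivity.
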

\begin{proof}
 Note that $G(X)\in\EMD'$ by Lemma~\ref{lem-F}, so the map
 \[ \al \: \ED(\pi G(X),\pi G(X')) \to \Hom(\pi_0(X),\pi_0(X')) \]
 is a bijection by Lemma~\ref{lem-al}.  It will thus suffice to
 show that the map $\pi_0\:[X,X']\to\Hom(\pi_0(X),\pi_0(X'))$ is
 surjective, and to identify its kernel.

 We will use the standard notation $N=(A,B,C,\eta,\chi,\psi)$ for
 $G(X)$ (so $A=\pi_0(X)$ and so on), and similarly for $G(X')$.
 Choose a presentation $P\xra{f}Q\xra{g}X\xra{h}\Sg P$, and write
 $P_0=\pi_0(P)$ and so on.  We then have a diagram 
 \[ \xymatrix{
     [\Sg Q,X'] \ar[r]^{f^*} \ar[d]_{\pi_1}^\simeq &
     [\Sg P,X'] \ar[r]^{h^*} \ar[d]_{\pi_1}^\simeq &
     [X,X'] \ar[r]^{g^*} \ear[d]^{\pi_0} &
     [Q,X'] \ar[r]^{f^*} \ar[d]^{\pi_0}_\simeq &
     [P,X']              \ar[d]^{\pi_0}_\simeq \\
     \Hom(Q_0,C') \ar[r]_{f_0^*} &
     \Hom(P_0,C') &
     \Hom(A,A') \mar[r]_{g_0^*} &  
     \Hom(Q_0,A') \ar[r]_{f_0^*} &
     \Hom(P_0,A').
    }
 \]
 The arrows marked as isomorphisms are indeed isomorphisms, by
 Lemma~\ref{lem-free-moore}(c).  The top row is exact because it comes
 from a cofibration sequence.  The right hand half of the bottom row is left
 exact, because it arises from the right exact sequence
 $P_0\to Q_0\to A$.  It follows by diagram chasing that the map
 $\pi_0\:[X,X']\to\Hom(A,A')$ is surjective, and that the
 kernel $K(X,X')$ is the cokernel of the map
 $f_0^*\:\Hom(Q_0,C')\to\Hom(P_0,C')$.  As the sequence
 $E=(P_0\to Q_0\to A)$ is a free resolution of $A$, we see that this
 cokernel is just $\Ext(A,C')$.  More precisely, we can define
 $\lm\:\Hom(P_0,C')\to\Ext(A,C')$ by $\lm(u)=u_*([E])$, and standard
 homological algebra says that the sequence
 \[ \Hom(Q_0,C') \xra{f_0^*} \Hom(P_0,C') \xra{\lm} \Ext(A,C') \]
 is right exact.  Consider the diagram 
 \[ \xymatrix{
     [\Sg Q,X'] \ar[r]^{f^*} \ar[d]_{\pi_1}^\simeq &
     [\Sg P,X'] \ear[r]^{h^*} \ar[d]_{\pi_1}^\simeq &
     K(X,X') \ar[d]^\om \\
     \Hom(Q_0,C') \ar[r]_{f_0^*} &
     \Hom(P_0,C') \ear[r]_\lm &
     \Ext(A,C')
    }
 \]
 To see that the right hand square commutes, note that
 $h\in K(X,\Sg P)$ and $\om(h)=[E]$.  It follows that for
 $u\:\Sg P\to X'$ we have
 \[ \om(h^*(u)) = \om(uh) = \om(u_*h) = \pi_0(u)_*\om(h) =
      \pi_0(u)_* [E] = \lm(\pi_0(u))
 \]
 as required.  As both rows are right exact, we see that
 $\om\:K(X,Y)\to\Ext(A,C')$ is an isomorphism as claimed.
\end{proof}

\begin{corollary}\label{cor-moore-maps}
 Let $X$ be a Moore spectrum, and let $X'$ be an arbitrary spectrum.
 Then there is a natural short exact sequence
 \[ 2.\Ext(\pi_0(X),\pi_1(X')) \xra{} [X,X'] \xra{} \EEED(G(X),G(X')). \]
\end{corollary}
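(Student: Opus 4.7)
The plan is to treat this as a direct corollary of Proposition~\ref{prop-moore-maps} and Proposition~\ref{prop-om-xi}. The short exact sequence in the statement arises by identifying the kernel and image of the map $G\:[X,X']\to\EEED(G(X),G(X'))$ in terms of invariants already computed.

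Proposition~\ref{prop-moore-maps} (applied with $Y=X'$) supplies the two key pieces of input whenever $X$ is a Moore spectrum: the composite $\pi G\:[X,X']\to\ED(\pi G(X),\pi G(X'))$ is surjective, and the map $\om\:K(X,X')\to\Ext(\pi_0(X),\pi_1(X'))$ is a bijection, in particular surjective and an isomorphism.

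Proposition~\ref{prop-om-xi} now does the remaining work. Part~(a) of that proposition asserts that surjectivity of $\pi G$ combined with surjectivity of $\om$ forces $G$ itself to be surjective. Part~(b) asserts that if $\om$ is an isomorphism, then so is the restriction $\om'\:L(X,X')\to 2.\Ext(\pi_0(X),\pi_1(X'))$. Since $L(X,X')=\ker(G)$ by definition, this identifies the kernel of $G$ with $2.\Ext(\pi_0(X),\pi_1(X'))$.

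Assembling these two pieces of information yields exactly the claimed short exact sequence, and naturality in $X'$ is inherited from the naturality of $G$, $\om$, and $\om'$. There is no genuine obstacle here: the substantive work was done in Proposition~\ref{prop-moore-maps} (via the presentation argument) and in the abstract analysis packaged into Proposition~\ref{prop-om-xi}; the corollary is a purely formal diagram-chase combining them.
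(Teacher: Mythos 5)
Your proposal is correct and matches the paper's own proof, which likewise deduces the corollary by combining Proposition~\ref{prop-moore-maps} (surjectivity of $\pi G$ and bijectivity of $\om$) with the diagram-chase packaged in Proposition~\ref{prop-om-xi}. You have simply spelled out the formal steps that the paper leaves implicit.
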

\begin{proof}
 This follows from the Proposition together with
 Proposition~\ref{prop-om-xi}. 
\end{proof}

\begin{corollary}\label{cor-CW-maps}
 Let $X$ be a CW spectrum with only $0$-cells and $1$-cells, and let
 $X'$ be an arbitrary spectrum.  Then there is a natural short exact
 sequence 
 \[ 2.\Ext(\pi_0(X),\pi_1(X')) \xra{} [X,X'] \xra{G} \EEED(G(X),G(X')).
 \]
\end{corollary}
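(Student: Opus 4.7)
The plan is to reduce to Corollary~\ref{cor-moore-maps} by exhibiting $X$ as a wedge $X\simeq \Sg P_1\vee X_0$ for some free Moore spectrum $P_1$ and some Moore spectrum $X_0$. By the definition of the CW structure, $X$ fits in a cofibration sequence $P\xra{f}Q\to X$ where $P$ and $Q$ are both wedges of copies of $S$, and hence free Moore spectra. The map $\pi_0(f)\colon\pi_0(P)\to\pi_0(Q)$ is a homomorphism of free abelian groups, and its image is free (as a subgroup of the free abelian group $\pi_0(Q)$), so the surjection $\pi_0(P)\to\image(\pi_0(f))$ splits. This gives a direct-sum decomposition $\pi_0(P)=\pi_0(P_1)\oplus\pi_0(P_2)$ with $\pi_0(P_1)=\ker(\pi_0(f))$ and $\pi_0(P_2)$ mapping isomorphically onto $\image(\pi_0(f))$. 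Lemma~\ref{lem-free-moore}(a) realises this as a wedge decomposition $P=P_1\vee P_2$, and Lemma~\ref{lem-free-moore}(c) implies $f|_{P_1}$ is nullhomotopic, so up to homotopy $f=(0,f_2)$ where $f_2=f|_{P_2}$ has $\pi_0(f_2)$ injective.

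With $f$ so decomposed, the standard fact that the cofibre of a wedge of maps is the wedge of the cofibres gives $X\simeq\Sg P_1\vee X_0$, where $X_0$ is the cofibre of $f_2$. A direct homology computation (or an appeal to the remark following Definition~\ref{defn-presentation}) shows that $X_0$ is a Moore spectrum, since $\pi_0(f_2)$ is injective.

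Each term in the desired short exact sequence is additive in $X$, so it suffices to prove the statement separately for $X_0$ and for $\Sg P_1$ and then take the direct sum. The claim for $X_0$ is exactly Corollary~\ref{cor-moore-maps}. For $\Sg P_1$ we have $\pi_0(\Sg P_1)=0$, so $2.\Ext(\pi_0(\Sg P_1),\pi_1(X'))=0$, and it remains to check that $G\colon[\Sg P_1,X']\to\EEED(G(\Sg P_1),G(X'))$ is an isomorphism. The shift identity $[\Sg P_1,X']=[P_1,\Sg^{-1}X']$ together with Lemma~\ref{lem-free-moore}(c) identifies the left side with $\Hom(\pi_0(P_1),\pi_1(X'))$. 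One unpacks $G(\Sg P_1)$ as the EEED with $A=0$, $C=\pi_0(P_1)$, $B=\pi_0(P_1)/2$, $\psi=\eta=0$, and $\chi$ the projection; an EEED morphism into $G(X')$ is then determined uniquely by its component $h\colon\pi_0(P_1)\to\pi_1(X')$, since the relation $\chi'h=g\chi$ forces $g$ (using $2\chi'=0$) and the identity $\psi'g=0$ follows from $\psi'\chi'=0$.

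The main obstacle is the wedge decomposition $X\simeq\Sg P_1\vee X_0$: it requires both the freeness of $\image(\pi_0(f))$ to get a splitting of $\pi_0(P)$, and the fact that such splittings on $\pi_0$ of free Moore spectra are realised geometrically via Lemma~\ref{lem-free-moore}. Once this geometric splitting is in hand, the remainder of the proof is bookkeeping atop Corollary~\ref{cor-moore-maps} and a direct computation for a wedge of $1$-spheres.
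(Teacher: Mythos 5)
Your proof is correct and follows essentially the same route as the paper: split off the kernel of $\pi_0(f)$ as a wedge summand $P_1$ of $P$ using the freeness of the image and Lemma~\ref{lem-free-moore}, write $X\simeq\Sg P_1\vee X_0$ with $X_0$ a Moore spectrum, and apply Corollary~\ref{cor-moore-maps} to $X_0$ plus a direct check for the suspended free part. The only (immaterial) difference is at the last step, where the paper reduces to $R=S$ and invokes the Yoneda lemma, while you unpack $G(\Sg P_1)$ and verify by hand that a morphism out of it is determined by its $C$-component.
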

\begin{proof}
 The assumption on $X$ is that there exists a cofibration sequence
 $P\to Q\to X\to\Sg P$ where $P$ and $Q$ are again free Moore spectra,
 but the map $\pi_0(P)\to\pi_0(Q)$ need not be injective.  Let $R_0$
 and $F_0$ be the kernel and image of this map.  These are subgroups
 of the free abelian groups $\pi_0(P)$ and $\pi_0(Q)$, so they are
 again free.  We have a short exact sequence
 $R_0\xra{}\pi_0(P)\xra{}F_0$, which must split as $F_0$ is free.  We
 know that $\pi_0\:\FreeMoore\to\FreeAb$ is an equivalence, so we have
 a parallel splitting $P=R\Wedge F$ with $\pi_0(R)=R_0$ and
 $\pi_0(F)=F_0$, and we find that the map $P\to Q$ is zero on $R$.
 Now let $Y$ be the cofibre of the map $F\to Q$.  We find that $Y$ is
 a Moore spectrum, and that $X=Y\Wedge\Sg R$.  Thus
 Corollary~\ref{cor-moore-maps} gives the claim for $Y$, and we need
 only check the claim for $\Sg R$.  As $\pi_0(\Sg R)=0$ the first term
 in the short exact sequence vanishes, so the claim is just that the
 map $G\:[\Sg R,X']\to\EEED(G(\Sg R),G(X'))$ is bijective.  Here $R$
 is a free Moore spectrum and so is a wedge of copies of $S$, so we
 can reduce to the case $R=S$.  Here the claim is that
 $\pi_1(X')\simeq[G(S^1),G(X')]$.  This is easy to see directly, or
 one can appeal to the Yoneda Lemma as in Corollary~\ref{cor-sixteen}.
\end{proof}

We can now prove as promised that $F\:\Moore\to\EMD$ is an
equivalence. 

\begin{proof}[Proof of Theorem~\ref{thm-moore-strong}]
 Suppose that $X$ and $X'$ are both Moore spectra.  Then
 $2.\pi_1(X')=0$ by Lemma~\ref{lem-F}, so
 $2.\Ext(\pi_0(X),\pi_1(X'))=0$.  Thus, Corollary~\ref{cor-moore-maps}
 tells us that $[X,X']=\EEED(G(X),G(X'))=\EMD(F(X),F(X'))$, so
 $F\:\Moore\to\EMD$ is full and faithful.  It is also essentially
 surjective by Lemma~\ref{lem-F-reflects}, so it is an equivalence.
\end{proof}

\begin{corollary}\label{cor-G-surjective}
 The functor $G\:\Spectra\to\EEED$ is essentially surjective.
\end{corollary}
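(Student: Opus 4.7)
The plan is to reduce essential surjectivity of $G$ to essential surjectivity of $\pi G\colon\Spectra\to\ED$, and then realize arbitrary $\eta$-diagrams by a short cellular construction.

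The first step, which I expect to be the main obstacle, is to show that the fibre of $\pi\colon\EEED\to\ED$ over any $\eta$-diagram $P=(A\xra{\eta}C)$ contains a single isomorphism class. Any EEED $N=(A,B,C,\eta,\chi,\psi)$ over $P$ is determined by its middle short exact sequence $E=(C/2\to B\to A[2])$ (since $\chi$ and $\psi$ factor uniquely through $C/2$ and $A[2]$), and by Lemma~\ref{lem-EEED-Phi} this sequence satisfies $\Phi(E)=\ov{\eta}$. Since $A[2]$ and $C/2$ both have exponent $2$, Remark~\ref{rem-exponent-two} gives $2\cdot\Ext(A[2],C/2)=0$, so Proposition~\ref{prop-Phi} makes $\Phi$ itself an isomorphism (not merely after dividing by $2$). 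Thus $[E]$ is forced to equal $\Phi^{-1}(\ov{\eta})$, so any two EEEDs over $P$ have equivalent middle extensions; a chosen equivalence assembles with $1_A$ and $1_C$ into a morphism in $\EEED$, which is an isomorphism because $\pi$ reflects isomorphisms by Proposition~\ref{prop-pi}. In particular, $G(X)\simeq N$ whenever $\pi G(X)\simeq\pi(N)$. The difficulty here is that essential surjectivity of $\pi$ alone (Proposition~\ref{prop-pi}) would not suffice; one has to exploit the exponent-$2$ phenomenon to pin down the middle extension uniquely.

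It then remains to realize any given $\eta$-diagram $P=(A\xra{\eta}C)$ as $\pi G(X)$ for some spectrum $X$. Writing $\ov{\eta}\colon A/2\to C$ for the factorisation of $\eta$, I would pick a Moore spectrum $M$ with $\pi_0 M\cong A$ (by the essential surjectivity half of Lemma~\ref{lem-F-reflects}); Lemma~\ref{lem-F} then gives $\pi_1 M=A/2$ with Hopf action $A\to A/2$ equal to the projection. Choose a surjection $\phi\colon F\twoheadrightarrow C$ from a free abelian group $F$, and set $Y=M\Wedge\Sg HF$, so that $\pi_0 Y=A$ and $\pi_1 Y=A/2\oplus F$, with Hopf action $a\mapsto(\bar{a},0)$. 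Let $R$ be the kernel of the surjection $\bar{\phi}\colon A/2\oplus F\to C$ defined by $(\bar{a},f)\mapsto\ov{\eta}(\bar{a})+\phi(f)$. Realize a generating set of $R\subseteq\pi_1 Y=[S^1,Y]$ by a map $\rho\colon\bigWedge S^1\to Y$, and let $X$ be the cofibre of $\rho$. The cofibration long exact sequence yields $\pi_0 X=A$, $\pi_1 X=(A/2\oplus F)/R=C$, and naturality of the Hopf action under cofibres forces $\eta_X\colon A\to C$ to be the composite $A\twoheadrightarrow A/2\xra{\ov{\eta}}C$, which equals $\eta$. Thus $\pi G(X)\simeq P$, and the reduction from the first paragraph gives $G(X)\simeq N$ as required.
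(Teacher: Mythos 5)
Your proof is correct and follows essentially the same strategy as the paper's: first reduce to realizing the underlying $\eta$-diagram (your first paragraph is exactly the content of Proposition~\ref{prop-pi} --- that $\pi$ is full and reflects isomorphisms, proved there by the same exponent-two argument --- which the paper simply cites), and then realize an arbitrary $\eta$-diagram as the cofibre of a map into a two-piece wedge with the right $\pi_0$ and $\pi_1$. The only substantive variation is in the attaching map: the paper maps a suspended Moore spectrum on $A/2$ into $\Sg X\Wedge Y$ and invokes Proposition~\ref{prop-moore-maps} to realize the prescribed effect on $\pi_1$, whereas you attach a wedge of circles to $M\Wedge\Sg HF$, which makes that realization step immediate.
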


Here we give a proof using Moore spectra.
Proposition~\ref{prop-postnikov} will give an alternative proof using
two-stage Postnikov systems.

\begin{proof}
 Consider an object $N=(A,B,C,\eta,\chi,\psi)\in\EEED$.  Choose Moore
 spectra $W$, $X$ and $Y$ with $\pi_0(W)=A/2$ and $\pi_0(X)=C$ and
 $\pi_0(Y)=A$.  Recall that multiplication by $\eta$ gives an
 isomorphism $A/2\to\pi_1(Y)$.  Using
 Proposition~\ref{prop-moore-maps} and the isomorphism
 $[\Sg W,T]=[W,\Sg^{-1}T]$ we see that there is a map
 $f\:\Sg W\to\Sg X\Wedge Y$ such that the induced map
 $\pi_1(f)\:A/2\to C\oplus A/2$ is $a\mapsto(\eta(a),a)$.  Let $Z$ be
 the cofibre of $f$.  A straightforward calculation with the long
 exact sequence from the defining cofibration sequence gives
 $\pi_0(Z)=A$ and $\pi_1(Z)=C$, with multiplication by $\eta$ being
 the originally given map $\eta\:A\to C$.  This gives an isomorphism
 $u\:\pi N\to\pi G(Z)$ in $\ED$.  We know from
 Proposition~\ref{prop-pi} that $\pi$ is full, so we can choose
 $v\:N\to G(Z)$ with $\pi(v)=u$.  The same proposition tells us that
 $\pi$ reflects isomorphism, and $u$ is an isomorphism, so $v$ is an
 isomorphism.  Thus, $N$ is in the essential image of $G$, as
 required.
\end{proof}

We conclude this section by justifying Remark~\ref{rem-zt-moore}.

\begin{lemma}\label{lem-zt}
 There is an element $\zt\in\pi_2(S/2)$ with $\bt\zt=\eta\:S^2\to S^1$
 and $2\zt=\rho\eta^2$ and $\pi_2(S/2)=(\Z/4)\zt$.
\end{lemma}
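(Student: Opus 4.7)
The plan is to read off $\pi_2(S/2)$ from the long exact sequence of homotopy groups associated to the cofibration sequence $S\xra{2}S\xra{\rho}S/2\xra{\bt}S^1\xra{2}S^1$, and then identify $\zt$ explicitly. Using the classical values $\pi_1(S)=(\Z/2)\eta$ and $\pi_2(S)=(\Z/2)\eta^2$, together with the fact that multiplication by $2$ is zero on both groups, the long exact sequence collapses to a short exact sequence
\[ 0 \to (\Z/2)\eta^2 \xra{\rho_*} \pi_2(S/2) \xra{\bt_*} (\Z/2)\eta \to 0. \]
In particular $|\pi_2(S/2)|=4$ and $\rho_*$ is injective on $\pi_2(S)$.

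Next I would use the surjectivity of $\bt_*$ to pick any element $\zt\in\pi_2(S/2)$ with $\bt\zt=\eta$; this choice encodes the first defining property. To pin down the additive order, I would invoke the relation $2\cdot 1_{S/2}=\rho\eta\bt$ from Proposition~\ref{prop-two}. Composing with $\zt\:S^2\to S/2$ on the right gives
\[ 2\zt = \rho\eta\bt\zt = \rho\eta\cdot\eta = \rho\eta^2, \]
which is the second asserted identity. Since $\rho_*\:\pi_2(S)\to\pi_2(S/2)$ is injective and $\eta^2\neq 0$, this element is nonzero, so $\zt$ has order $4$ and consequently generates the whole of $\pi_2(S/2)$, giving $\pi_2(S/2)=(\Z/4)\zt$.

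There is no real obstacle here: everything is a direct consequence of the already-recalled computation of $\pi_{\leq 2}(S)$ and the relation in Proposition~\ref{prop-two}. The only small point to be careful about is the bilinearity convention for composition with $\eta$ mentioned at the start of Section~4, so that $\eta\bt\zt$ unambiguously denotes $\eta\circ\bt\circ\zt=\eta\cdot\eta=\eta^2$ with no sign ambiguity (which is harmless anyway since $2\eta=0$).
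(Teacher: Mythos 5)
Your proof is correct and follows essentially the same route as the paper: both extract the short exact sequence $(\Z/2)\eta^2\rightarrowtail\pi_2(S/2)\twoheadrightarrow(\Z/2)\eta$ from the defining cofibration, choose $\zt$ lifting $\eta$, and use the relation $2\cdot 1_{S/2}=\rho\eta\bt$ from Proposition~\ref{prop-two} to get $2\zt=\rho\eta^2\neq 0$, forcing $\zt$ to have order $4$ and generate the group.
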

This is classical but we include a proof for completeness.
\begin{proof}
 From the cofibration sequence defining $S/2$ we obtain an exact
 sequence  
 \[ \pi_2(S) \xra{2} \pi_2(S) \xra{\rho_*} \pi_2(S/2) \xra{\bt_*} 
     \pi_1(S) \xra{2} \pi_1(S)
 \]
 or equivalently 
 \[ \xymatrix{
     (\Z/2)\eta^2 \ar[r]^{2=0} &
     (\Z/2)\eta^2 \mar[r]^{\rho_*} & 
     \pi_2(S/2) \ear[r]^{\bt_*} &
     (\Z/2)\eta \ar[r]^{2=0} &
     (\Z/2)\eta.
    }
 \]
 It follows that there exists $\zt$ with $\bt\zt=\eta$, and that
 $\pi_2(S/2)=\{0,\zt,\rho\eta^2,\zt+\rho\eta^2\}$.  We also know that
 $2.1_{S/2}=\rho\eta\bt$, so $2\zt=\rho\eta\bt\zt=\rho\eta^2$, so in
 fact $\pi_2(S/2)=\{0,\zt,2\zt,3\zt\}$ and $\zt$ has order $4$.
\end{proof}
\begin{remark}
 There is some indeterminacy in the construction of $\zt$, but at the
 end of the argument we see that the only ambiguity is that $\zt$
 could be replaced by $-\zt$.  We do not know of any other approach
 that eliminates this ambiguity.
\end{remark}

\begin{lemma}\label{lem-zt-moore}
 For any Moore spectrum $X$ the map $\zt^*\:[S/2,X]\to\pi_2(X)$ is an
 isomorphism. 
\end{lemma}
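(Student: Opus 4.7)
The plan is to reduce to free Moore spectra via a presentation and then apply the five-lemma. By Lemma~\ref{lem-presentation} we may choose a cofibration sequence $P \xra{f} Q \xra{g} X \xra{h} \Sg P$ with $P$ and $Q$ free Moore spectra. Applying the functors $[S/2,-]$ and $\pi_2 = [S^2,-]$ to the associated Puppe sequence $P \to Q \to X \to \Sg P \to \Sg Q$, and using $\zt^*$ as the vertical natural transformation, we obtain the commutative ladder
\[
\xymatrix@C=1em{
[S/2, P] \ar[r] \ar[d]_{\zt^*} & [S/2, Q] \ar[r] \ar[d]_{\zt^*} & [S/2, X] \ar[r] \ar[d]_{\zt^*} & [S/2, \Sg P] \ar[r] \ar[d]_{\zt^*} & [S/2, \Sg Q] \ar[d]_{\zt^*} \\
\pi_2(P) \ar[r] & \pi_2(Q) \ar[r] & \pi_2(X) \ar[r] & \pi_1(P) \ar[r] & \pi_1(Q)
}
\]
with exact rows.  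The five-lemma will then force the middle column to be an isomorphism, provided the four outer columns are.

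Because $S/2$ and $S^2$ are compact objects in $\Spectra$, both $[S/2,-]$ and $\pi_2$ preserve arbitrary coproducts, and by Lemma~\ref{lem-free-moore}(b) each of $P$ and $Q$ is a wedge of copies of $S$.  It therefore suffices to check that $\zt^* : [S/2, S] \to \pi_2(S)$ and $\zt^* : [S/2, S^1] \to \pi_1(S)$ are isomorphisms.  Applying $[-, S]$ and $[-, S^1]$ to the cofibration $S \xra{2} S \xra{\rho} S/2 \xra{\bt} S^1 \xra{2} S^1$, and using $\pi_0(S) = \Z$, $\pi_1(S) = (\Z/2)\eta$ and $\pi_2(S) = (\Z/2)\eta^2$, we read off $[S/2, S] = (\Z/2)\eta\bt$ and $[S/2, S^1] = (\Z/2)\bt$.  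The relation $\bt\zt = \eta$ from Lemma~\ref{lem-zt} then gives $\zt^*(\eta\bt) = \eta(\bt\zt) = \eta^2$ and $\zt^*(\bt) = \bt\zt = \eta$, so in both cases $\zt^*$ carries a generator of one copy of $\Z/2$ to a generator of another, and is therefore an isomorphism.

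I do not anticipate any real obstacle: the hardest part is merely keeping the base-case identifications straight.  An alternative would be to compare the short exact sequences that arise for $[S/2, X]$ and $\pi_2(X)$ directly (both having shape $0 \to \pi_0(X)/2 \to (-) \to \pi_0(X)[2] \to 0$) and verify that $\zt^*$ is a morphism between them, but this requires identifying the $\Ext$ class of the extension for $\pi_2(X)$, which is more cumbersome than the five-lemma route above.
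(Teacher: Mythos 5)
Your proof is correct and follows essentially the same route as the paper: reduce to the base cases $X=S$ and $X=S^1$ via a presentation and the five-lemma, using that free Moore spectra are wedges of spheres and that $\bt\zt=\eta$ identifies the generators. The only cosmetic difference is that you make the compactness of $S/2$ and $S^2$ explicit when passing to wedges, which the paper leaves implicit.
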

\begin{proof}
 First, if $X=S^1$ we have $[S/2,X]=(\Z/2)\bt$ and
 $\pi_2(X)=(\Z/2)\eta=(\Z/2)\bt\zt$, so $\zt^*\:[S/2,X]\to\pi_2(X)$ is
 an isomorphism.  Similarly, if $X=S$ then $\pi_2(X)=(\Z/2)\eta^2$ and
 $[S/2,X]=(\Z/2)\eta\bt=(\Z/2)$ and
 $\pi_2(X)=(\Z/2)\eta^2=(\Z/2)\eta\bt\zt$, so again
 $\zt^*\:[S/2,X]\to\pi_2(X)$ is an isomorphism.  Now choose a
 presentation $P\xra{f}Q\xra{g}X\xra{h}\Sg P$.  As $P$ is a wedge of
 copies of $S$ and $\Sg P$ is a wedge of copies of $S^1$ we find that 
 the maps $\zt^*\:[S/2,P]\to\pi_2(P)$ and
 $\zt^*\:[S/2,\Sg P]\to\pi_2(\Sg P)$ are isomorphisms, and similarly
 for $Q$.  We can now apply the Five Lemma to the diagram 
 \[ \xymatrix{
     [S/2,P] \ar[r]^{f_*} \ar[d]_{\zt^*}^\simeq &
     [S/2,Q] \ar[r]^{g_*} \ar[d]_{\zt^*}^\simeq &
     [S/2,X] \ar[r]^{h_*} \ar[d] &
     [S/2,\Sg P] \ar[r]^{f_*} \ar[d]_\simeq^{\zt^*} &
     [S/2,\Sg Q] \ar[d]_\simeq^{\zt^*}  \\
     \pi_2(P) \ar[r]_{f_*} & 
     \pi_2(Q) \ar[r]_{g_*} & 
     \pi_2(X) \ar[r]_{h_*} & 
     \pi_1(P) \ar[r]_{f_*} &
     \pi_1(Q) 
    }
 \]
 to see that $\zt^*\:[S/2,X]\to\pi_2(X)$ is also an isomorphism.
\end{proof}

\section{Two-stage Postnikov systems}
\label{sec-postnikov}

\begin{definition}
 We write $\Postnikov$ for the category of spectra $X$ such that
 $\pi_k(X)=0$ for $k\not\in\{0,1\}$.
\end{definition}

\begin{proposition}\label{prop-postnikov}
 The functor $G\:\Postnikov\to\EEED$ is full and essentially
 surjective, and it reflects isomorphisms.  If $X,X'\in\Postnikov$
 then there is a natural short exact sequence
 \[ 2.\Ext(\pi_0(X),\pi_1(X')) \xra{} [X,X'] \xra{G} \EEED(G(X),G(X')).
 \]
\end{proposition}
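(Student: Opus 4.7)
The plan is to invoke Proposition~\ref{prop-om-xi}, reducing the nontrivial content to (a) surjectivity of $\pi G$ and (b) bijectivity of $\om$; granted these, parts~(b) and~(c) of that proposition together with Proposition~\ref{prop-pi} yield the short exact sequence and fullness. Reflection of isomorphisms is immediate since $\al G = \pi_0$ and $\gm G = \pi_1$ recover all nontrivial homotopy groups of objects in $\Postnikov$. For essential surjectivity, given an EEED $N = (A, B, C, \eta, \chi, \psi)$, factor $\eta$ as $A \twoheadrightarrow A/2 \xra{\bar\eta} C$ and take $Z$ to be the homotopy pushout in spectra of $P_1(MA) \xla{j} \Sg H(A/2) \xra{\Sg\bar\eta} \Sg HC$, where $MA$ is a Moore spectrum for $A$, $P_1$ denotes the first Postnikov section, and $j$ is the fibre inclusion from $\Sg H(A/2) \to P_1(MA) \to HA$. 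A Mayer--Vietoris computation yields $\pi_k(Z) = 0$ for $k \notin \{0,1\}$, $\pi_0(Z) = A$, $\pi_1(Z) = C$, and $\eta$-action $\eta$, so $\pi G(Z) \cong \pi(N)$. Since $A[2]$ and $C/2$ have exponent $2$, Remark~\ref{rem-exponent-two} makes $\Phi$ bijective, so an EEED is determined up to isomorphism by its underlying $\eta$-diagram; hence $G(Z) \cong N$.

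For bijectivity of $\om$, apply $[Z, -]$ to the Postnikov cofibration $\Sg HC' \xra{i'} Z' \xra{q'} HA'$. The vanishing $[Z, \Sg^{-1} HA'] = 0$ (by connectivity of $Z$) gives injectivity of $i'_*$, and $[Z, HA'] = \Hom(A, A')$ (Hurewicz plus UCT, using only that $H_{-1}(Z) = 0$) shows that $\pi_0\phi = 0$ iff $\phi = i' \tilde\phi$ for a unique $\tilde\phi \in [Z, \Sg HC']$. Since $\pi_1\phi = \pi_1\tilde\phi$, I must identify the subgroup $\{\tilde\phi \st \pi_1\tilde\phi = 0\}$ of $[Z, \Sg HC']$ with $\Ext(A, C')$. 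The cleanest route is to reduce to the Moore case via the pushout: since $\tau_{>1}MA$ is $1$-connected and $Z'$ is $1$-truncated, the truncation $MA \to P_1(MA)$ gives an isomorphism $[P_1(MA), Z'] \cong [MA, Z']$, and combining this with Proposition~\ref{prop-moore-maps} and naturality of $\om$ along the pushout presentation of $Z$ yields the identification.

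For surjectivity of $\pi G$, given compatible $(f, h) \in \ED$ with $h \eta = \eta' f$, use the same pushout presentation: a map $Z \to Z'$ corresponds to a pair $(\phi_1 \: P_1(MA) \to Z', \phi_2 \: \Sg HC \to Z')$ with $\phi_1 j = \phi_2 \Sg\bar\eta$. Take $\phi_2 = i' \circ \Sg Hh$, using $[\Sg HC, Z'] = \Hom(C, C')$ via $\pi_1$. For $\phi_1$, apply Proposition~\ref{prop-moore-maps} to $MA$ with the $\ED$-morphism $(f, h\bar\eta)$ (well-defined because $h\bar\eta \circ \mathrm{proj} = h\eta = \eta' f$) to produce $\psi \: MA \to Z'$, and extend uniquely along $MA \to P_1(MA)$ via the truncation isomorphism to get $\phi_1$. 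The compatibility $\phi_1 j = \phi_2 \Sg\bar\eta$ reduces via $[\Sg H(A/2), Z'] = \Hom(A/2, C')$ to noting that both sides induce $h\bar\eta$ on $\pi_1$. The main obstacle is the identification of $K(Z, Z')$ with $\Ext(A, C')$: a naive UCT approach is hindered by the nontrivial integral homology of $HA$ (hence of $Z$) as spectra, so the reduction to the Moore case via the pushout presentation is essential to the argument.
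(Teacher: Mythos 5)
Your overall strategy—reduce to Proposition~\ref{prop-om-xi}, so that everything comes down to surjectivity of $\pi G$ and bijectivity of $\om$—is the same as the paper's, but your route to those two facts is genuinely different. The paper works with the Postnikov cofibration $\Sg HC'\to X'\to HA'\to\Sg^2HC'$ of the \emph{target} and an arbitrary $(-1)$-connected source, using Lemma~\ref{lem-HA}, Lemma~\ref{lem-SHC} (a direct UCT computation of $[X,\Sg HC']$ via $H_1(X)=\cok(\eta)$), and the rather delicate Lemma~\ref{lem-HH} (computing $H_2HA=A/2$, $[HA,\Sg^2HC]=\Hom(A/2,C)$, and identifying the $\eta$-action on the fibre of the $k$-invariant). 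Your pushout model $Z=P_1(MA)\cup_{\Sg H(A/2)}\Sg HC$ avoids Lemma~\ref{lem-HH} entirely and lets you import the hard content from Proposition~\ref{prop-moore-maps} via naturality of $\om$ (Lemma~\ref{lem-om-natural}); I checked the Mayer--Vietoris computation and the vanishing $[\Sg^2H(A/2),Z']=0$ that makes ``maps out of the pushout $=$ compatible pairs'' literally true, and both the $\pi G$-surjectivity and the $K(Z,Z')\cong K(MA,Z')\cong\Ext(A,C')$ identification go through. This buys a cleaner treatment of the compatibility of the abstract identification with $\om$ (which the paper leaves to the reader at the end of Lemma~\ref{lem-postnikov-maps}), at the cost of needing the auxiliary model. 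Incidentally, your stated reason for avoiding the UCT—``the nontrivial integral homology of $HA$''—is not the real obstacle: the UCT for $[Z,\Sg HC']=H^1(Z;C')$ involves only $H_0$ and $H_1$ of $Z$, both controlled by Lemma~\ref{lem-H-one}, and this is exactly what Lemma~\ref{lem-SHC} exploits.

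There is one step you must add. Your arguments for surjectivity of $\pi G$ and bijectivity of $\om$ are carried out with the pushout-presented object $Z$ as \emph{source}, but the proposition concerns arbitrary $X\in\Postnikov$. You therefore need to know that every $X\in\Postnikov$ is equivalent to such a $Z$. This does not follow from ``$G(Z)\cong G(X)$ as diagrams''; you must realize that isomorphism by a map of spectra. The fix is available from your own machinery: given $X$, build $Z$ with $\pi G(Z)\cong\pi G(X)$, apply your $\pi G$-surjectivity statement (pushout source, arbitrary target in $\Postnikov$) to produce $p\:Z\to X$ inducing the isomorphism on $\pi_0$ and $\pi_1$, and observe that $p$ is then an equivalence since all other homotopy groups vanish. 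Once this bootstrap is recorded, the general case follows by transport along $p$, and the proof is complete. (The paper sidesteps this entirely because its Lemmas~\ref{lem-HA}--\ref{lem-postnikov-maps} are stated for arbitrary $(-1)$-connected sources.)
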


The proof will follow after some preliminary results.

\begin{lemma}\label{lem-HA}
 If $X$ is $(-1)$-connected then the maps
 \[ [X,HA'] \xra{G} \EEED(G(X),G(HA')) \xra{\pi}
     \ED(\pi G(X),\pi G(HA')) \xra{\al} \Hom(\pi_0(X),A')
 \]
 are all isomorphisms.
\end{lemma}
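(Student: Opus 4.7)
The plan is to exploit the fact that $G(HA')$ is exceptionally simple, which makes two of the three maps in the chain obviously bijective, and then to invoke the classical cohomology computation of $[X,HA']$ for the remaining map.

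First I would compute $G(HA')$ explicitly. Applying $[-,HA']$ to the cofibration sequence $S\xra{2}S\xra{\rho}S/2\xra{\bt}S^1$ gives $[S,HA']=A'$ and $[S^1,HA']=\pi_1(HA')=0$, and an identification $[S/2,HA']\simeq A'[2]$ as the $2$-torsion subgroup. Thus $G(HA')$ is the extended $\eta$-diagram
\[ A'[2] \xra{\text{inc}} A' \xra{0} 0 \xra{0} A'[2], \]
and in particular $\gm(G(HA'))=0$.

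With this in hand the two rightmost maps are easy. A morphism in $\ED$ from $\pi G(X)=(\pi_0(X)\xra{\eta}\pi_1(X))$ to $\pi G(HA')=(A'\xra{0}0)$ is a pair $(f,h)$ satisfying $h\eta=0$; since $h\colon\pi_1(X)\to 0$ is already forced to vanish, the only data is $f$, so $\al$ is a bijection. For $\pi$, Proposition~\ref{prop-pi} provides a natural short exact sequence whose kernel is $\Hom(\al(G(X))[2],\gm(G(HA'))/2)$; this vanishes because $\gm(G(HA'))/2=0$, and the same proposition supplies surjectivity of $\pi$, so $\pi$ is an isomorphism.

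Finally, for $G$ itself I would unwind the definitions to see that the composite $\al\pi G\colon[X,HA']\to\Hom(\pi_0(X),A')$ is simply the assignment $f\mapsto\pi_0(f)$. Since $X$ is $(-1)$-connected we have $H_{-1}(X)=0$, so the universal coefficient theorem gives $[X,HA']=H^0(X;A')\simeq\Hom(H_0(X),A')=\Hom(\pi_0(X),A')$, and this identification is precisely $\pi_0$. Hence $\al\pi G$ is an isomorphism, and since $\al$ and $\pi$ already are, so is $G$. I do not expect a genuine obstacle: once $G(HA')$ has been identified and the vanishing $\gm(G(HA'))=0$ observed, the lemma reduces to the standard Eilenberg--MacLane computation.
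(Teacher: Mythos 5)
Your proposal is correct and follows essentially the same route as the paper: both arguments observe that $\gm G(HA')=\pi_1(HA')=0$ kills the kernel term in Proposition~\ref{prop-pi} and trivializes the $\ED$-morphism set, and both finish with the Universal Coefficient Theorem collapse $[X,HA']\simeq\Hom(H_0(X),A')=\Hom(\pi_0(X),A')$ using $H_{-1}(X)=0$. Your explicit computation of $G(HA')$ as $A'[2]\to A'\to 0\to A'[2]$ is a harmless elaboration of what the paper leaves implicit.
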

\begin{proof}
 As $\gm G(HA')=\pi_1(HA')=0$, Proposition~\ref{prop-pi} tells us that
 \[ \EEED(G(X),G(HA'))=\ED(\pi G(X),\pi G(HA')), \]
 and using $\gm G(HA')=0$ again we see that this is the same as
 $\Hom(\pi_0(X),A')$.  The Universal Coefficient Theorem gives a short
 exact sequence
 \[ \Ext(H_{-1}(X),A') \to [X,HA']=H^0(X;A') \to \Hom(H_0(X),A'). \]
 As $X$ is $(-1)$-connected we have $H_{-1}(X)=0$ and
 $H_0(X)=\pi_0(X)$ so this collapses to an isomorphism
 $[X,HA']\to\Hom(\pi_0(X),A')=\EEED(G(X),G(HA'))$ as required.  
\end{proof}

\begin{lemma}\label{lem-SHC}
 If $X$ is $(-1)$-connected then there is a short exact sequence
 \[ 2.\Ext(\pi_0(X),C') \xra{} [X,\Sg HC'] \xra{G} \EEED(G(X),G(\Sg HC')).
 \]
\end{lemma}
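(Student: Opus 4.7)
The plan is to apply Proposition~\ref{prop-om-xi} with $Y = \Sg HC'$. That result will yield the desired short exact sequence once we verify~(i) that the map $\pi G\:[X,\Sg HC']\to\ED(\pi G(X),\pi G(\Sg HC'))$ is surjective, and~(ii) that the map $\om\:K(X,\Sg HC')\to\Ext(\pi_0(X),C')$ is an isomorphism. Given these, conclusion~(a) of the proposition makes $G$ surjective, conclusion~(b) makes $\om'\:L(X,\Sg HC')\to 2.\Ext(\pi_0(X),C')$ an isomorphism, and combining these (using $\ker G = L(X,\Sg HC')$ by definition) yields the required short exact sequence.

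Before verifying~(i) and~(ii), I would reduce to the case $X\in\CC$, the subcategory of Lemma~\ref{lem-H-one}. The group $[X,\Sg HC']$ is unchanged on replacing $X$ by its $1$-truncation $\tau_{\leq 1}X$, since $\pi_k(\Sg HC')=0$ for $k\neq 1$; and a five-lemma argument on the cofibration sequence $S\xra{2}S\xra{\rho}S/2\xra{\bt}S^1\xra{2}S^1$ shows that $G(X)\to G(\tau_{\leq 1}X)$ is likewise an isomorphism. A two-stage Postnikov spectrum has $H_k=0$ for $k>1$, so $\tau_{\leq 1}X$ indeed lies in $\CC$.

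For~(i), observe that $G(\Sg HC')$ has $\al'=0$ and trivial $\eta'$, so an $\ED$-morphism $\pi G(X)\to\pi G(\Sg HC')$ is just a homomorphism $h\:\pi_1(X)\to C'$ annihilating the $\eta$-action $\pi_0(X)\to\pi_1(X)$. By Lemma~\ref{lem-H-one} these are precisely the maps factoring through $H_1(X)$, giving $\ED(\pi G(X),\pi G(\Sg HC'))=\Hom(H_1(X),C')$. The Universal Coefficient Theorem then provides a natural surjection $[X,\Sg HC']=H^1(X;C')\to\Hom(H_1(X),C')$ via $f\mapsto H_1(f)$, and naturality of the Hurewicz map identifies this surjection with $\pi G$.

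For~(ii), since $\pi_0(\Sg HC')=0$ we have $K(X,\Sg HC')=\ker(\pi G)$, and UCT identifies this kernel with $\Ext(\pi_0(X),C')$: an extension $e=(C'\xra{i}M\xra{p}\pi_0(X))$ corresponds to the composite $f_e\:X\to H\pi_0(X)\xra{\tilde e}\Sg HC'$, where $\tilde e$ is the classifying map of $e$ (the connecting map of $HC'\to HM\to H\pi_0(X)\xra{\tilde e}\Sg HC'$). The main obstacle will be to check $\om(f_e)=e$: I would apply the octahedral axiom to $X\to H\pi_0(X)\xra{\tilde e}\Sg HC'$ to present $Cf_e$ in a cofibration sequence involving $\Sg^2 H\pi_1(X)$ (the cofibre of the Postnikov projection) and $\Sg HM$ (the cofibre of $\tilde e$), and then compute $\pi_1(Cf_e)=M$ in a way that exhibits $E(f_e)$ as the original extension $e$. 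Once~(i) and~(ii) are in hand, the short exact sequence follows formally from Proposition~\ref{prop-om-xi}.
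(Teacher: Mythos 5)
Your overall route is the same as the paper's: feed Proposition~\ref{prop-om-xi} with the surjectivity of $\pi G$ and the bijectivity of $\om$, both extracted from the Universal Coefficient Theorem together with the identification $\ED(\pi G(X),\pi G(\Sg HC'))\simeq\Hom(H_1(X),C')$ supplied by Lemma~\ref{lem-H-one}. Your extra step in~(ii) --- checking that the UCT identification of $\ker(\pi G)$ with $\Ext(\pi_0(X),C')$ is actually inverse to $\om$, via the octahedron on $X\to H\pi_0(X)\to\Sg HC'$ --- is a point the paper leaves implicit at this stage (it does the analogous verification explicitly in Lemma~\ref{lem-postnikov-maps}), and it is genuinely needed, since Proposition~\ref{prop-om-xi} is stated in terms of $\om$ and not of an abstract isomorphism $K\simeq\Ext$. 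That part of the plan is sound.

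The one step that fails is the reduction to $X\in\CC$. It is false that a two-stage Postnikov spectrum has $H_k=0$ for $k>1$: by Lemma~\ref{lem-HH} we already have $H_2(HA)=A/2$, so $\tau_{\leq 1}X$ does not in general lie in $\CC$, and you cannot invoke Lemma~\ref{lem-H-one} as stated after truncating. Fortunately the reduction is also unnecessary: the only consequence of Lemma~\ref{lem-H-one} you use is that $H_1(X)$ is the cokernel of $\eta\:\pi_0(X)\to\pi_1(X)$, and the exact sequence in that lemma's proof,
\[ H_2(X)\to\pi_0(X)/2\xra{\eta}\pi_1(X)\to H_1(X)\to 0, \]
gives exactly this cokernel statement for \emph{every} $(-1)$-connected $X$; the hypothesis $H_2(X)=0$ enters only to make $\pi_0(X)/2\to\pi_1(X)$ injective, which you never use. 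Delete the truncation paragraph and the rest of your argument goes through.
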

\begin{proof}
 In view of Proposition~\ref{prop-om-xi}, it will suffice to give a
 short exact sequence 
 \[ \Ext(\pi_0(X),C') \xra{} [X,\Sg HC'] \xra{\pi G}
     \ED(\pi G(X),\pi G(\Sg HC')).
 \]
 The Universal Coefficient Theorem gives a short exact sequence 
 \[ \Ext(H_0(X),C') \xra{} [X,\Sg HC'] \xra{} \Hom(H_1(X),C'). \]
 We also know from Lemma~\ref{lem-H-one} that $H_1(X)$ is the cokernel
 of the map $\eta\:\pi_0(X)\to\pi_1(X)$, so $\Hom(H_1(X),C')$ is the
 same as the group of maps $h$ making the diagram 
 \[ \xymatrix{
      \pi_0(X) \ar[r]^\eta \ar[d] & \pi_1(X) \ar[d]^h \\
      0 \ar[r] & C' 
    }
 \]
 commute, which is $\ED(\pi G(X),\pi G(\Sg HC'))$.
\end{proof}

\begin{lemma}\label{lem-HH}
 For any abelian groups $A$ and $C$ we have $H_1HA=0$ and $H_2HA=A/2$
 and $[HA,\Sg^2HC]=H^2(HA;C)=\Hom(A/2,C)$.  Moreover, if $X$ is the
 fibre of a map $h\:HA\to\Sg^2HC$ corresponding to a homomorphism
 $\sg\:A/2\to C$, then the map 
 \[ \eta \: A = \pi_0(X) \to \pi_1(X) = C \]
 is just the composite of $\sg$ with the projection $A\to A/2$. 
\end{lemma}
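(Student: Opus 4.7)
Parts~(1) and~(2) are a standard low-dimensional homology computation; my plan is to reduce to the case $A=\Z$ by choosing a free resolution $0\to F_1\to F_0\to A\to 0$ and taking the induced cofibration sequence $HF_1\to HF_0\to HA$.  Since each $HF_i$ is a wedge of copies of $H\Z$, we have $H_*(HF_i)=F_i\otimes H_*(H\Z)$, so the long exact sequence in integer homology computes $H_1(HA)=\ker(F_1\to F_0)=0$ and $H_2(HA)=\cok(F_1/2\to F_0/2)=A/2$, provided we know the base values $H_0(H\Z)=\Z$, $H_1(H\Z)=0$ and $H_2(H\Z)=\Z/2$.  These base cases can be extracted by smashing the Bockstein cofibration $H\Z\xra{2}H\Z\to H\Z/2$ with $H\Z/2$ and comparing to the low-dimensional part of the dual Steenrod algebra $\pi_*(H\Z/2\Smash H\Z/2)=\CA^*$.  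Part~(3) is then immediate from the Universal Coefficient theorem: $H^2(HA;C)=\Hom(H_2(HA),C)\oplus\Ext(H_1(HA),C)=\Hom(A/2,C)$.

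For part~(4), the plan is as follows.  Given $a\in A=\pi_0(X)$ with lift $\tilde{a}\colon S\to X$, the class $\eta\tilde{a}\colon S^1\to X$ projects to $\eta\cdot a=0\in\pi_1(HA)$, so $\eta\tilde{a}=\phi\bar{a}$ for a unique $\bar{a}\in\pi_1(\Sg HC)=C$, and the task is to identify $\bar{a}$ with $\sg(a\bmod 2)$.  I would do this by producing a map of cofibration sequences
\[
\xymatrix{
S^1 \ar[r]^\eta \ar[d]_{\bar{a}} & S \ar[r]^{\rho_\eta} \ar[d]_{\tilde{a}} & S/\eta \ar[r]^{\beta_\eta} \ar[d]_{\hat{a}} & S^2 \ar[d]^{\Sg\bar{a}} \\
\Sg HC \ar[r]_\phi & X \ar[r]_p & HA \ar[r]_h & \Sg^2 HC,
}
\]
where $\hat{a}$ is the unique extension of $a$ over $S/\eta$, the uniqueness coming from the Puppe-sequence identification $[S/\eta,HA]=A$ (which uses $\pi_1(HA)=0$).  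The right-hand square reads $h\hat{a}=\Sg\bar{a}\circ\beta_\eta$, and since $[S^1,\Sg^2 HC]=\pi_{-1}(HC)=0$ the pullback $\beta_\eta^*$ is injective, so $\bar{a}$ is recovered from $h\hat{a}$.

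The element $h\hat{a}\in[S/\eta,\Sg^2 HC]=H^2(S/\eta;C)=C$ is then computed via UCT as $H_2(h)(H_2(\hat{a})(1))$.  By part~(3) we have $H_2(h)=\sg$, and by naturality in $A$ (writing $\hat{a}$ as the composite of $\hat{1}\colon S/\eta\to H\Z$ with the map $H\Z\to HA$ induced by $a\colon\Z\to A$) everything reduces to showing that $H_2(\hat{1})(1)=1\in\Z/2=H_2(H\Z)$.  This is where I expect the main work to lie: my approach is to identify $S/\eta\simeq\Sg^{-2}\C P^2$ and appeal to the nonvanishing of $\Sq^2$ on $H^*(\C P^2;\Z/2)$.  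Concretely, the composite $\Sq^2\circ\rho\colon H\Z\to\Sg^2 H\Z/2$ is the $h$ corresponding to $\sg=1\in\Hom(\Z/2,\Z/2)$, and its composite with $\hat{1}$ is the nonzero element of $H^2(S/\eta;\Z/2)$, which forces $H_2(\hat{1})(1)\ne 0$ and thereby completes the identification $\bar{a}=\sg(a\bmod 2)$.
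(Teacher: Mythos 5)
Your argument is correct, and on the crux of the lemma --- identifying $\eta\:\pi_0(X)\to\pi_1(X)$ with $\sg$ --- it is essentially the paper's argument in a different packaging: both proofs come down to mapping the cofibration sequence $S^1\xra{\eta}S\to S/\eta\to S^2$ into the Postnikov cofibration sequence of $X$, and both pin down the one nontrivial constant by identifying $S/\eta$ with $\Sg^{-2}\C P^2$ and using the nonvanishing of $\Sq^2$ there to show that the unique nonzero map $H\to\Sg^2H/2$ restricts nontrivially to $S/\eta$. The paper runs this forwards (it first produces the commutative square $S/\eta\to S^2$ over $H\to\Sg^2H/2$ and then chases the large diagram), whereas you run it backwards (recovering $\bar{a}$ from $h\hat{a}$ via the injectivity of $\beta_\eta^*$, which you correctly justify by $\pi_{-1}(HC)=0$, and then computing $h\hat{a}$ by the Universal Coefficient Theorem); these organisations are interchangeable. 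Where you genuinely diverge is in the computation of $H_1HA$ and $H_2HA$: the paper extracts these from the finite spectrum $Q$ of Lemma~\ref{lem-Q} together with Lemma~\ref{lem-H-one}, staying inside machinery it has already built, while you resolve $A$ by free abelian groups and feed in the low-dimensional homology of $H\Z$ from the dual Steenrod algebra. That works, but note one small extra input your sketch needs: the Bockstein comparison with $\pi_*(H\Z/2\Smash H\Z/2)$ only delivers $H_*(H\Z;\Z/2)$ in degrees $\leq 2$, and to conclude $H_2(H\Z)=\Z/2$ rather than $\Z$ you must also know that $H_k(H\Z)$ is torsion for $k>0$ (for instance because $H\Q\Smash H\Z\simeq H\Q$). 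With that supplied, your route to the homology computation is a perfectly good, if slightly less self-contained, alternative to the paper's.
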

\begin{proof}
 First, Lemma~\ref{lem-H-one} tells us that $H_1HA$ is the cokernel of
 $\eta\:\pi_0HA\to\pi_1HA$ but $\pi_1HA=0$ so $H_1HA=0$.  For the
 second claim, we reuse the spectrum $Q$ from Lemma~\ref{lem-Q}.  As
 the fibre of the map $Q\to H$ is $2$-connected we see that
 $H_2HA=\pi_2(Q\Smash HA)=H_2(Q;A)$.  The cofibration sequence
 $S\to Q\to S^2/2\to S^1$ gives $H_2(Q;A)=H_2(S^2/2;A)=H_0(S/2;A)$, and
 the Universal Coefficient Theorem gives a short exact sequence
 \[ H_0(S/2)\ot A \xra{} H_0(S/2;A) \xra{} \Tor(H_{-1}(S/2),A), \]
 which collapses to an isomorphism $H_0(S/2;A)=A/2$.  Finally, the
 cohomological UCT gives a short exact sequence
 \[ \Ext(H_1HA,C) \to H^2(HA;C) \to \Hom(H_2HA,C) \]
 which collapses to an isomorphism
 $H^2(HA;C)=\Hom(H_2HA,C)=\Hom(A/2,C)$ as claimed.

 We now see in particular that $[H,\Sg^2 H/2]=\Z/2$, so there is a
 unique nontrivial map $h_1\:H\to\Sg^2H/2$.  We can also compose the
 projection $H\to H/2$ with the Steenrod operation
 $\Sq^2\:H/2\to\Sg^2H/2$ to get a map $h_2\:H\to\Sg^2H/2$.  Standard
 calculations show that the reduced cohomology of $\C P^2$ is
 $\Z\{x,x^2\}$ with $|x|=2$, and $\Sq^2$ sends the mod $2$ reduction
 of $x$ to the mod $2$ reduction of $x^2$.  This shows that
 $h_2\neq 0$ and so $h_2=h_1$.  Moreover, $\C P^2$ is just the cofibre
 of $\eta\:S^3\to S^2$, so $S/\eta=\Sg^{-2}\C P^2$.  The above
 calculation therefore gives a commutative diagram as follows:
 \[ \xymatrix{
     S/\eta \ar[d]_{a_1} \ar[r]^\dl & S^2 \ar[d]^{c_1} \\
     H \ar[r]_{h_1} & \Sg^2 H/2.
    }
 \]
 
 Now suppose we have a general map $h\:HA\to\Sg^2HC$, corresponding to
 $\sg\:A/2\to C$, and we form a cofibration sequence
 \[ \Sg HC \xra{f} X \xra{g} HA \xra{h} \Sg^2HC. \]
 Consider an element $a\in A$ and the corresponding
 element $c=\sg(a+2A)\in C$ (which automatically has $2c=0$).  We have
 a map $\mu_a\:\Z\to A$ given by $\mu_a(n)=na$, and a map
 $\nu_c\:\Z/2\to C$ given by $\nu_c(1)=c$.  Consider the following
 diagram: 
 \[ \xymatrix{
     S^1 \ar[r]^\eta \ar[dd]_c & 
     S \ar[r] \ar@{.>}[dd]^{a'} &
     S/\eta \ar[d]_{a_1} \ar[r]^\dl &
     S^2 \ar[d]^{b_1} \\ & & 
     H \ar[r]_{h_1} \ar[d]_{H(\mu_a)} &
     \Sg^2H/2 \ar[d]^{H(\nu_c)} \\
     \Sg HC \ar[r]_f &
     X \ar[r]_g &
     HA \ar[r]_h &
     \Sg^2HC.     
    }
 \]
 We have just shown that the top right square commutes.  As our
 identification of $[HA,\Sg^2HC]$ is natural, the bottom right square
 commutes as well.  As the top and bottom rows are cofibration
 sequences, we can choose $a'$ making everything commute.  If we use
 $f_*$ to identify $\pi_0(X)$ with $A$, and $g_*$ to identify
 $\pi_1(X)$ with $C$, then the conclusion is that $\eta a=c$.  This
 shows that $\eta\:\pi_0(X)\to\pi_1(X)$ is essentially the same as
 $\sg$, as claimed.
\end{proof}

\begin{lemma}\label{lem-postnikov-maps}
 Suppose that $X$ is $(-1)$-connected and $X'\in\Postnikov$.  Then
 there is a left exact sequence
 \[ \Ext(\pi_0(X),\pi_1(X')) \xra{} [X,X'] \xra{\pi G} \ED(G(X),G(X')).
 \] 
\end{lemma}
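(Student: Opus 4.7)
The plan is to build the proof around the two-stage Postnikov decomposition of $X'$. Write $A=\pi_0(X)$, $A'=\pi_0(X')$ and $C'=\pi_1(X')$. Since $X'\in\Postnikov$ there is a cofibration sequence
\[ \Sg HC' \xra{\iota} X' \xra{\pi} HA' \xra{k} \Sg^2 HC' \]
in which $\pi_1(\iota)\:C'\to C'$ is the identity and, by Lemma~\ref{lem-HH}, the $k$-invariant $k$ corresponds to the composite $A'\to A'/2\xra{\ov{\eta'}}C'$. Applying $[X,-]$ produces a Puppe exact sequence
\[ [X,\Sg^{-1}HA']\to[X,\Sg HC']\xra{\iota_*}[X,X']\xra{\pi_*}[X,HA']. \]
The leftmost term is $H^{-1}(X;A')$, which vanishes by the universal coefficient theorem since $X$ is $(-1)$-connected. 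Hence $\iota_*$ is injective and $\ker(\pi_*)=\iota_*([X,\Sg HC'])$.

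Next I would chase the kernel of $\pi G\:[X,X']\to\ED(\pi G(X),\pi G(X'))$ through this structure. An element $u$ lies in $\ker(\pi G)$ precisely when $\pi_0(u)=0$ and $\pi_1(u)=0$. By Lemma~\ref{lem-HA} the first condition says $\pi_*u=0\in[X,HA']$, which by the Puppe sequence means $u=\iota_*v$ for a unique $v\in[X,\Sg HC']$. The second condition then reads $\pi_1(v)=0$, using that $\pi_1(\iota)=1_{C'}$.

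Finally, by the universal coefficient theorem (as in the proof of Lemma~\ref{lem-SHC}) there is a short exact sequence
\[ \Ext(A,C')\to[X,\Sg HC']\to\Hom(H_1(X),C'), \]
in which the right-hand group is identified with $\ED(\pi G(X),\pi G(\Sg HC'))$ and the surjection sends $v$ to the factorisation of $\pi_1(v)$ through $H_1(X)$ (automatic since $\pi_0(\Sg HC')=0$ forces $\pi_1(v)\eta=0$). Therefore $\pi_1(v)=0$ iff $v$ lies in the $\Ext$-subgroup, and composing with the injective $\iota_*$ yields the required injective map $\Ext(A,C')\to[X,X']$ whose image is exactly $\ker(\pi G)$. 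No genuinely new idea is needed; the main point of care is the bookkeeping around the identification of kernels and the fact that $\pi_1(\iota)=1_{C'}$, which is built into the construction of the Postnikov cofibration.
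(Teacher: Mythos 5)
Your proof is correct and follows essentially the same route as the paper: both apply $[X,-]$ to the Postnikov cofibration sequence of $X'$ and combine Lemma~\ref{lem-HA} with the universal-coefficient analysis of $[X,\Sg HC']$ underlying Lemma~\ref{lem-SHC}. The only real difference is cosmetic --- you get injectivity of $\Ext(\pi_0(X),\pi_1(X'))\to[X,X']$ from the vanishing of $[X,\Sg^{-1}HA']$, whereas the paper splits this inclusion using the invariant $\om$; both work, and the map you construct is the same one.
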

\begin{proof}
 We will use the notation
 \begin{align*}
  M &= \pi G(X) = (A \xra{\eta} C) \\
  M' &= \pi G(X') = (A' \xra{\eta} C').
 \end{align*}
 Put $A'=\pi_0(X')$ and $C'=\pi_1(X')$, so we have a Postnikov
 cofibration  sequence
 \[ \Sg HC'\xra{f}X'\xra{g}HA'\xra{h}\Sg^2 HC'. \]
 (This is usually constructed as a fibration sequence, but in the
 stable category a fibration sequence can be converted to a
 cofibration sequence by changing the sign of any one of the maps.) 
 This gives a commutative diagram 
 \[ \xymatrix{
  \Ext(A,C') \ar[r] \mar[d] &
  K(X,X') \mar[d] \ar[r]^\om &
  \Ext(A,C') \\
  [X,\Sg HC'] \ear[d]_{\pi G} \ar[r]^{f_*} &
  [X,X'] \ar[d]_{\pi G} \ar[r]^{g_*} &
  [X,HA] \ar[d]^{\pi G}_\simeq \\
  \ED(M,(0\to C')) \mar[r]_{f_*} &
  \ED(M,M') \ar[r]_{g_*} &
  \ED(M,(A'\to 0)). 
 } \]
 The middle row is exact as it arises from a cofibration sequence.
 The bottom row is left exact by inspection of the definitions.  The
 first column is short exact by Lemma~\ref{lem-SHC}.  The middle
 column is left exact by definition of $K(X,X')$.  The map
 $[X,HA]\to\ED(M,A'\to 0)$ is an isomorphism by Lemma~\ref{lem-HA}.
 We leave it to the reader to check that the composite of the top row
 is the identity.  It follows that the map $\Ext(A,C')\to K(X,X')$ is
 injective, and by chasing the diagram we see that it is also
 surjective.
\end{proof}

\begin{proof}[Proof of Proposition~\ref{prop-postnikov}]
 Suppose that $X,X'\in\Postnikov$, and use the usual notation
 $A=\pi_0(X)$ and so on.  Suppose we are given a commutative square
 \[ \xymatrix{
     A \ar[r]^\eta \ar[d]_f & C \ar[d]^h \\
     A' \ar[r]_{\eta'} & C',
    }
 \]
 corresponding to a morphism $(f,h)\in\ED(\pi G(X),\pi G(X'))$.
 Consider the following diagram:
 \[ \xymatrix{
      \Sg HC \ar[r] \ar[d]_{\Sg H h} &
      X \ar@{.>}[d]^p \ar[r] & 
      HA \ar[d]_{H f} \ar[r]^d &
      \Sg^2HC \ar[d]^{\Sg^2Hh} \\
      \Sg HC' \ar[r] &
      X' \ar[r] & 
      HA' \ar[r]_{d'} &
      \Sg^2HC'
    }
 \]
 The rows are the standard Postnikov cofibration sequences for $X$ and $X'$.  As
 $d$ and $d'$ correspond to $\eta$ and $\eta'$ as in
 Lemma~\ref{lem-HH}, we see that the right hand square commutes, so we
 can choose $p$ making everything commute; we then have
 $\pi G(p)=(f,h)$.   This shows that $\pi G\:\Postnikov\to\ED$ is
 full.  If $\pi G(p)$ is an isomorphism then $\pi_k(p)$ is an
 isomorphism for $k\in\{0,1\}$ (by assumption) and also for
 $k\not\in\{0,1\}$ (as in that case $\pi_k(X)=0=\pi_k(X')$); so $p$ is
 an equivalence.  This shows that $\pi G$ reflects isomorphism, and
 \emph{a fortiori} $G$ has the same property.  Next, given an object
 $(A\xra{\eta}C)\in\ED$ we note that $\eta$ factors through $A/2$ and
 so gives a map $h\:HA\to\Sg^2HC$.  The fibre $Fh$ is then an object
 of $\Postnikov$ with $\pi G(Fh)\simeq(A\to C)$; so
 $\pi G\:\Postnikov\to\EEED$ is essentially surjective.

 We also see from Lemma~\ref{lem-postnikov-maps} that the map
 $\om\:K(X,X')\to\Ext(A,C')$ is an isomorphism.  We deduce using
 Proposition~\ref{prop-om-xi} that there is a natural short exact
 sequence 
 \[ 2.\Ext(A,C') \to [X,X'] \to \EEED(G(X),G(X')). \]
 In particular, the functor $G\:\Postnikov\to\EEED$ is full.  

 Finally, suppose we have an object $N\in\EEED$.  We have seen that
 $\pi G$ is essentially surjective, so we can choose $X\in\Postnikov$
 and an isomorphism $p_0\:\pi G(X)\to\pi N$.  As $\pi$ is full we can
 choose $p\:G(X)\to N$ lifting $p_0$, and as $\pi$ reflects
 isomorphisms we see that $p$ is an isomorphism.  This proves that $G$
 is essentially surjective.
\end{proof}

\section{Duality}
\label{sec-duality}

We next discuss two kinds of duality for the category $\EEED$.

\begin{lemma}\label{lem-Dl}
 Let $\CJ$ be as in Definition~\ref{defn-CJ}.  Then there is a functor
 $\Dl\:\CJ^\opp\to\CJ$ given by 
 \begin{align*}
  \Dl(a) &= c & \Dl(b) &= b & \Dl(c) &= S \\
  \Dl(\rho) &= \bt & \Dl(\eta) &= \eta & \Dl(\bt) &= \rho.
 \end{align*}
 Moreover, this satisfies $\Dl^2=1$.  
\end{lemma}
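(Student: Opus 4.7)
The plan is to verify directly that the given data assembles into a well-defined contravariant additive functor, and then to observe $\Dl^2=1$ by inspection. Since $\CJ$ is generated (as an additive category) by the three morphisms $\rho,\eta,\bt$ subject to the relations $\bt\rho=0$ and $\rho\eta\bt=2.1_b$ (all other relations, such as $2\rho=2\eta=2\bt=0$, being consequences), it will suffice to verify that these two relations are preserved. (I interpret the apparent typo ``$\Dl(c)=S$'' as ``$\Dl(c)=a$'', which is forced by the claim $\Dl^2=1$.)

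First I would check that each generator lands in a hom-set of the right shape. For $\Dl\:\CJ^{\opp}\to\CJ$ we need $\Dl$ to send $\CJ(x,y)$ into $\CJ(\Dl(y),\Dl(x))$. Reading off Definition~\ref{defn-CJ}, the generator $\rho\in\CJ(a,b)=(\Z/2)\rho$ goes to $\bt\in\CJ(b,c)=\CJ(\Dl(b),\Dl(a))$; the generator $\eta\in\CJ(c,a)=(\Z/2)\eta$ goes to $\eta\in\CJ(c,a)=\CJ(\Dl(a),\Dl(c))$; and $\bt\in\CJ(b,c)=(\Z/2)\bt$ goes to $\rho\in\CJ(a,b)=\CJ(\Dl(c),\Dl(b))$. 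Each source and target hom-set is cyclic of order two, so the assignment extends uniquely to an additive map on each hom-set. The identity morphisms land in $\CJ(\Dl(x),\Dl(x))$ with the obvious convention $\Dl(1_x)=1_{\Dl(x)}$.

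Second, I would verify the defining relations. Contravariance gives
\[
 \Dl(\bt\rho) \;=\; \Dl(\rho)\circ\Dl(\bt) \;=\; \bt\circ\rho \;=\; 0,
\]
matching $\Dl(0)=0$, and
\[
 \Dl(\rho\eta\bt) \;=\; \Dl(\bt)\circ\Dl(\eta)\circ\Dl(\rho) \;=\; \rho\circ\eta\circ\bt \;=\; 2.1_b \;=\; 2.\Dl(1_b),
\]
matching $\Dl(2.1_b)=2.1_b$. Hence $\Dl$ respects both relations and therefore extends to a well-defined additive contravariant functor on all of $\CJ$.

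Finally, $\Dl^2=1$ reduces to a short table: on objects, $\Dl^2(a)=\Dl(c)=a$, $\Dl^2(b)=b$, $\Dl^2(c)=\Dl(a)=c$; on generators, $\Dl^2(\rho)=\Dl(\bt)=\rho$, $\Dl^2(\eta)=\Dl(\eta)=\eta$, $\Dl^2(\bt)=\Dl(\rho)=\bt$. Since these generate $\CJ$, the equality of functors follows. There is no real obstacle here --- the only thing to be careful about is the direction of composition, ensuring that the contravariant $\Dl$ reverses the order in each three-fold composite exactly as needed for the two relations to map to themselves.
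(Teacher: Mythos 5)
Your proof is correct and is exactly the ``straightforward check of definitions'' that the paper leaves unspoken: you verify that each generator lands in the correct hom-set under the order reversal, that the two composition relations $\bt\rho=0$ and $\rho\eta\bt=2.1_b$ are preserved, and that $\Dl^2=1$ on objects and generators; your reading of ``$\Dl(c)=S$'' as a typo for ``$\Dl(c)=a$'' is also the intended one. (One small quibble: $2\eta=0$ is imposed in Definition~\ref{defn-CJ} rather than being a consequence of the two composition relations --- unlike $2\rho=(2.1_b)\rho=\rho\eta(\bt\rho)=0$ and $2\bt=(\bt\rho)\eta\bt=0$ --- but since $\Dl(\eta)=\eta$ this relation is trivially preserved and nothing in your argument is affected.)
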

\begin{proof}
 A straightforward check of definitions.
\end{proof}

This is connected with topology as follows:

\begin{lemma}\label{lem-Dl-T}
 If we define $\Xi\:(\CJ\tm\CJ)^\opp\to\Ab$ by
 $\Xi(x,y)=[T(x)\Smash T(y),S^1]$, then there are natural isomorphisms 
 \[ \CJ(x,\Dl(y))\simeq \Xi(x,y) \simeq \CJ(y,\Dl(x)). \]
\end{lemma}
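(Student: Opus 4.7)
The plan is to prove the lemma via Spanier-Whitehead duality. The key claim is that under $T$ the involution $\Dl$ on $\CJ$ corresponds to $X\mapsto D(X)\Smash S^1$, where $D$ denotes Spanier-Whitehead duality. Granting this, the first isomorphism follows from the standard adjunction $[X,D(Y)\Smash S^1]\simeq[X\Smash Y,S^1]$, and the second from the symmetry of the smash product.

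First I would identify the duals of the three relevant objects: $D(S)=S$ trivially, $D(S^1)=S^{-1}$ by shifting, and $D(S/2)=\Sg^{-1}S/2$ by applying $D$ to the cofibration sequence $S\xra{2}S\to S/2$ and noting that $D(2)=2$. Smashing with $S^1$ then yields identifications on objects
\begin{align*}
 D(T(a))\Smash S^1 &= S^1 = T(\Dl(a)),\\
 D(T(b))\Smash S^1 &= S/2 = T(\Dl(b)),\\
 D(T(c))\Smash S^1 &= S = T(\Dl(c)).
\end{align*}

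The substance of the argument is to check that these identifications are natural in $y$, i.e.\ that the Spanier-Whitehead duals of $\rho$, $\eta$ and $\bt$, after the appropriate shifts, correspond to $\Dl(\rho)=\bt$, $\Dl(\eta)=\eta$ and $\Dl(\bt)=\rho$ respectively. This reduces to the observation that the cofibration sequence $S\xra{2}S\xra{\rho}S/2\xra{\bt}S^1$ is self-dual up to a shift, expressing the familiar duality between the inclusion of the bottom cell of $S/2$ and the projection to its top cell; for $\eta$ the self-duality is automatic. Once naturality is in hand, the adjunction gives
\[ \CJ(x,\Dl(y)) = [T(x),T\Dl(y)] \simeq [T(x),D(T(y))\Smash S^1] \simeq [T(x)\Smash T(y),S^1] = \Xi(x,y), \]
and the second isomorphism follows from $\Xi(x,y)\simeq\Xi(y,x)$ via the symmetry of smash.

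The main obstacle will be pinning down the naturality in the self-duality of $S/2$, since $S/2$ has nontrivial automorphisms and one must rule out spurious signs. As a sanity check, and as an alternative route if the duality bookkeeping proves unwieldy, one can directly verify the first isomorphism in each of the nine cases $(x,y)\in\{a,b,c\}^2$ using Definition~\ref{defn-CJ}. All cases reduce to elementary calculations except $(b,b)$, where one computes $[S/2\Smash S/2,S^1]\simeq[S/2,D(S/2)\Smash S^1]=[S/2,S/2]=\Z/4$ using Proposition~\ref{prop-two}, matching $\CJ(b,b)=\Z/4$.
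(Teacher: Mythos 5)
Your proposal is correct and follows essentially the same route as the paper, which also reduces the lemma to a natural equivalence $T(\Dl(x))\simeq F(T(x),S^1)=D(T(x))\Smash S^1$ and establishes the $S/2$ case by dualizing the cofibration sequence $S\xra{2}S\xra{\rho}S/2\xra{\bt}S^1$ and invoking uniqueness of cofibration sequences. The sign worry you flag is resolved exactly as you'd hope: since $2\rho=0$ and $2\bt=0$ we have $\rho=-\rho$ and $\bt=-\bt$, so the sign ambiguity in the self-duality equivalence is harmless.
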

\begin{proof}
 It is equivalent to claim that there are natural isomorphisms
 $T(\Dl(x))\simeq F(T(x),S^1)$.  This is clear for $x\in\{a,c\}$ (so
 $T(x)\in\{S,S^1\}$).  Next, we can apply $F(-,S^1)$ to the
 cofibration sequence
 \[ S\xra{2}S\xra{\rho}S/2\xra{\bt}S^1\xra{2}S^1 \]
 to get a fibration sequence
 \[ S\xra{2}S\xra{\bt^*}F(S/2,S^1)\xra{\rho^*}S^1\xra{2}S^1. \]
 In principle we need to change a sign to convert this fibration to a
 cofibration, but $\rho=-\rho$ and $\bt=-\bt$ so we need not worry
 about this.  It follows from the essential uniqueness of cofibration
 sequences that there is an equivalence $\nu\:S/2\to F(S/2,S^1)$
 making the diagram on the left commute:
 \[ \xymatrix{
     S \ar[r]^\rho \ar[d]_{\bt^*} &
     S/2 \ar[dl]_\nu \ar[d]^\bt & &
     (S/2)^{(2)} \ar[dr]^{\nu^\#} &
     S/2 \ar[l]_{\rho\Smash 1} \ar[d]^\bt \\
     F(S/2,S^1) \ar[r]_{\rho^*} &
     S^1 & &
     S/2 \ar[u]^{\rho\Smash 1} \ar[r]_\bt &
     S^1
    }
 \]
 (One can check that $\nu$ has order $4$, and that the only
 indeterminacy is that we could replace $\nu$ by $-\nu$.)  The map
 $\nu$ is adjoint to a map $\nu^\#\:S/2\Smash S/2\to S^1$, and the
 adjoint form of the left hand diagram is shown on the right.  It
 follows that everything fits together as described.
\end{proof}

We can use this to define a Brown-Comenetz type duality as follows.  
\begin{definition}\label{defn-BC-dual}
 For $N\in\EED=[\CJ^\opp,\Ab]$ we define $JN\:\CJ^\opp\to\Ab$ by
 $JN(x)=\Hom(N(\Dl x),\Q/\Z)$.  More explicitly, we have 
 \[ J(B\xra{\psi}A\xra{\eta}C\xra{\chi}B) = 
     (B^*\xra{\chi^*}C^*\xra{\eta^*}A^*\xra{\psi^*}B^*)
 \]
 (where $U^*$ denotes $\Hom(U,\Q/\Z)$).
\end{definition}

\begin{remark}\label{rem-BC-dual}
 As the group $\Q/\Z$ is divisible, the functor $U\mapsto U^*$ is
 exact, so the functor $J\:\EED^\opp\to\EED$ preserves $\EEED$.  It is
 also clear that there is a natural map $N\to J^2(N)$ which is an
 isomorphism when the groups $A$, $B$ and $C$ are all finite.
\end{remark}

\begin{proposition}\label{prop-BC-dual}
 Let $IX$ denote the Brown-Comenetz dual of a spectrum $X$, which is
 characterised by a natural isomorphism
 $[W,IX]\simeq\Hom(\pi_0(W\Smash X),\Q/\Z)$ for all spectra $W$.  Then
 $G(\Sg IX)=J(G(X))$.
\end{proposition}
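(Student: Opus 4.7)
The plan is to evaluate both functors at each object $x\in\CJ$ and then verify naturality. Unpacking definitions,
\[ JG(X)(x) = \Hom(G(X)(\Dl x),\Q/\Z) = \Hom([T(\Dl x),X],\Q/\Z), \]
whereas the defining property of $I$ gives
\[ G(\Sg IX)(x) = [T(x),\Sg IX] = [\Sg^{-1}T(x),IX] = \Hom(\pi_1(T(x)\Smash X),\Q/\Z). \]
Thus it suffices to produce, naturally in $x\in\CJ^\opp$, an isomorphism $[T(\Dl x),X]\simeq\pi_1(T(x)\Smash X)$.

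For this I would invoke Spanier--Whitehead duality, which is available because each of $T(a)=S$, $T(b)=S/2$, $T(c)=S^1$ is finite. Writing $DY=F(Y,S)$, the proof of Lemma~\ref{lem-Dl-T} identifies $T(\Dl x)$ with $F(T(x),S^1)=\Sg DT(x)$ (this is obvious for $x\in\{a,c\}$ and is the content of the $\nu$ argument for $x=b$). Then
\[ [T(\Dl x),X] = [\Sg DT(x),X] = [DT(x),\Sg^{-1}X] \simeq \pi_0(T(x)\Smash\Sg^{-1}X) = \pi_1(T(x)\Smash X), \]
where the third isomorphism is the standard duality $[DY,W]\simeq\pi_0(Y\Smash W)$ for finite $Y$.

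The only remaining issue is naturality in morphisms $u\:x\to y$ of $\CJ$. On the $JG(X)$ side $u$ acts contravariantly via $T(\Dl u)$, whereas on the $G(\Sg IX)$ side it acts via $T(u)$. That these two actions agree under the identification above is essentially the content of Lemma~\ref{lem-Dl-T}: its natural isomorphisms $\CJ(x,\Dl y)\simeq\Xi(x,y)\simeq\CJ(y,\Dl x)$ encode exactly the Spanier--Whitehead pairing I am using. The main obstacle, and the only step requiring genuine care, is to confirm that the specific duality datum constructed in the proof of Lemma~\ref{lem-Dl-T} (the map $\nu^\#\:S/2\Smash S/2\to S^1$) coincides -- up to the usual sign ambiguity -- with the evaluation map of the Spanier--Whitehead dual $DT(b)$. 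Once that compatibility is pinned down, the naturality square for each generating morphism $2,\rho,\eta,\bt$ of $\CJ$ commutes by direct inspection, completing the identification of functors $G(\Sg IX)\simeq JG(X)$.
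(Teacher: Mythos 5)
Your proposal is correct and follows essentially the same route as the paper: both arguments unwind $G(\Sg IX)(x)$ via the defining property of $I$, and then use the Spanier--Whitehead self-duality of the finite spectra $T(x)$ together with the identification $T(\Dl x)\simeq F(T(x),S^1)$ from Lemma~\ref{lem-Dl-T} to match $\pi_0(\Sg^{-1}T(x)\Smash X)$ with $[T(\Dl x),X]=G(X)(\Dl x)$. The naturality point you flag is exactly what Lemma~\ref{lem-Dl-T} is invoked for in the paper's proof as well.
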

\begin{proof}
 For $u\in\CJ$ we have 
 \[ G(\Sg IX)(u) = [\Sg^{-1}T(u),IX] =
     \Hom(\pi_0(\Sg^{-1}T(u)\Smash X),\Q/\Z).
 \]
 We also have $T(u)=T(\Dl^2 u)=F(T(\Dl u),S^1)$, so
 $\Sg^{-1}T(u)\Smash X=F(T(\Dl u),X)$, so
 \[ \pi_0(\Sg^{-1}T(u)\Smash X)=[T(\Dl u),X]=G(X)(\Dl u). \]
 Putting this together gives
 $G(\Sg IX)(u)=\Hom(G(X)(\Dl u),\Q/\Z)=J(G(X))(u)$.
\end{proof}

We now discuss a different construction that is more analogous to
Spanier-Whitehead duality.  It does not work very well, but it is
interesting that it works at all.

\begin{definition}\label{defn-Dl-EED}
 Recall that $F_x$ denotes the representable functor
 $\CJ(-,x)\:\CJ^\opp\to\Ab$.  We define $\Dl\:\EED^\opp\to\EED$ by 
 \[ \Dl(N)(x) = \EED(N,F_{\Dl(x)}). \]
\end{definition}

Note that a map $M\to\Dl N$ consists of a natural system of maps
$M(x)\to\EED(N,F_{\Dl(x)})$, or equivalently a natural system of maps
$M(x)\to\Hom(N(y),\CJ(y,\Dl(x)))$, or equivalently a natural system of
maps 
\[ M(x) \otimes N(y) \to \Xi(x,y). \] 
Using this we see that $\EED(M,\Dl(N))\simeq\EED(N,\Dl(M))$, so $\Dl$
is its own adjoint.  In particular, we have natural maps
\[
 M(x) \ot (\Dl M)(y) =
  M(x)\ot\EEED(M,F_{\Dl y}) \xra{1\ot\text{eval}_x}
  M(x)\ot\Hom(M(x),\Xi(y,x)) \xra{\text{eval}} \Xi(y,x)\simeq \Xi(x,y)
\]
and these correspond to a natural map $\kp\:M\to\Dl^2M$.  If we regard
this as a morphism in $\EED$, this is the unit of our adjunction; if we
regard it instead as a morphism $\Dl^2M\to M$ in $\EED^\opp$, it is the
counit. 

We can also use the Yoneda lemma to see that 
\[ \Dl(F_x)(y) = \EED(F_x,F_{\Dl(y)}) = 
    F_{\Dl(y)}(x) = \CJ(x,\Dl(y)) \simeq \CJ(y,\Dl(x)) = F_{\Dl(x)}(y) 
\]
so $\Dl(F_x)=F_{\Dl(x)}$.

\begin{proposition}\label{prop-kappa}
 There is a natural map $G(F(X,S^1))\to\Dl(G(X))$, which is an
 isomorphism when $X$ is any wedge of copies of $S$, $S^1$ and $S/2$. 
\end{proposition}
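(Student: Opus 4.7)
The plan is to build $\theta_X\:G(F(X,S^1))\to\Dl(G(X))$ from a three-fold smash pairing of spectra, verify that it is an isomorphism for each of the three ``cell'' spectra $T(a)=S$, $T(b)=S/2$ and $T(c)=S^1$, and then show that both sides convert wedges into products of the same shape so that the isomorphism propagates. Naturality of the map in $X$ will be built in from the start, and then the only work is on generators and on wedges.

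First I would construct the map. Given $f\in G(F(X,S^1))(x)=[T(x)\Smash X,S^1]$, for each $y\in\CJ$ and each $g\in G(X)(y)=[T(y),X]$ I form the composite $T(x)\Smash T(y)\xra{1\Smash g}T(x)\Smash X\xra{f}S^1$, which is an element of $\Xi(x,y)$. Using the identification $\Xi(x,y)\simeq\CJ(y,\Dl(x))=F_{\Dl(x)}(y)$ from Lemma~\ref{lem-Dl-T}, this is natural in $y$, so defines an element of $\EED(G(X),F_{\Dl(x)})=\Dl(G(X))(x)$; naturality in $x$ is similar. For the check on $X=T(x_0)$ with $x_0\in\{a,b,c\}$, Lemma~\ref{lem-Dl-T} supplies an equivalence $F(T(x_0),S^1)\simeq T(\Dl(x_0))$, so $G(F(T(x_0),S^1))\simeq G(T(\Dl(x_0)))=F_{\Dl(x_0)}$ by the corollary following Proposition~\ref{prop-two}. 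On the other side the Yoneda Lemma (Corollary~\ref{cor-sixteen}) gives $\Dl(F_{x_0})(x)=\EED(F_{x_0},F_{\Dl(x)})=F_{\Dl(x)}(x_0)\simeq\CJ(x_0,\Dl(x))\simeq\CJ(x,\Dl(x_0))=F_{\Dl(x_0)}(x)$, where the middle isomorphism is again Lemma~\ref{lem-Dl-T}. Both sides are thus naturally $F_{\Dl(x_0)}$, and tracing the identity of $T(x_0)$ through $\theta$ shows that $\theta_{T(x_0)}$ realises this common identification.

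To pass to a wedge $X=\bigWedge_{i\in I}X_i$ of such spectra, I would use that $F(X,S^1)=\prod_i F(X_i,S^1)$ and that each $T(x)$ is compact, so $G$ converts infinite wedges into direct sums in $\EED$ and infinite products into products; hence $G(F(X,S^1))=\prod_i G(F(X_i,S^1))$. On the other side $G(X)=\bigoplus_i G(X_i)$ in $\EED$, and $\Dl$ converts $\oplus$ into $\prod$ of values directly from its definition via $\EED(-,F_{\Dl(x)})$. Under these identifications $\theta_X$ becomes the product of the $\theta_{X_i}$, each an isomorphism. The main obstacle I expect is checking that $\theta_{T(b)}$ really lines up with the chosen equivalence $F(S/2,S^1)\simeq S/2$ in Lemma~\ref{lem-Dl-T} rather than just landing in the same isomorphism class, given the twofold ambiguity in $\nu$; provided one is willing to absorb that sign into the choice of $\nu$, the rest is essentially bookkeeping about compactness and additivity of the functors involved.
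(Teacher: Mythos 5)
Your construction of the map is the adjoint form of the paper's evaluation-map pairing, your identification on the generators $T(a),T(b),T(c)$ via Lemma~\ref{lem-Dl-T} and the Yoneda computation $\Dl(F_x)=F_{\Dl(x)}$ is exactly the paper's check, and your wedge-to-product argument is the paper's closing observation; so this is essentially the same proof. The sign issue with $\nu$ that you flag is real but harmless, since the same choice of $\nu$ underlies both the identification $\Xi(x,y)\simeq\CJ(y,\Dl(x))$ and the equivalence $F(S/2,S^1)\simeq S/2$.
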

\begin{proof}
 Using the evaluation map $\ev\:F(X,S^1)\Smash X\to S^1$ we get maps
 \begin{align*}
  G(F(X,S^1))(x)\ot G(X)(y) &= [T(x),F(X,S^1)]\ot [T(y),X] \\
   &\to [T(x)\Smash T(y),F(X,S^1)\Smash X] \\
   &\xra{\ev_*} [T(x)\Smash T(y),S^1] = \Xi(x,y).
 \end{align*}
 As explained above this gives a map
 $\kp_X\:G(F(X,S^1))\to\Dl(G(X))$.  In particular, when
 $X=T(u)\in\{S,S^1,S/2\}$ we have $F(X,S^1)=T(\Dl(u))$ so $\kp_X$ is a
 map from $G(T(\Dl u))=F_{\Dl u}$ to $\Dl(F_u)$; we leave it to the
 reader to check that this is the same as the isomorphism established
 previously.  Moreover, the domain and codomain of $\kp$ are both
 functors that convert wedges to products, so we see that $\kp_X$ is
 an isomorphism when $X$ is any wedge of copies of $S$, $S^1$ and
 $S/2$.
\end{proof}

We can make the functor $\Dl$ more explicit as follows:
\begin{proposition}\label{prop-Dl-formulae}
 If $N=(A,B,C,\eta,\chi,\psi)\in\EED$ then
 $\Dl(N)\simeq (A',B',C',\eta',\chi',\psi')$, where $B'=\Hom(B,\Z/4)$,
 and $A'$ is the set of pairs $(g,h)$ making the left hand diagram
 below commute, and $C'$ is the set of pairs $(f,g)$ making the right
 hand diagram commute.
 \[ \xymatrix{
       C \ar[r]^\chi \ar[d]_h & B \ar[d]^g & & 
       A \ar[r]^{\chi\eta} \ar[d]_f & B \ar[d]^g \\
       \Z \ear[r] & \Z/2 & & \Z \ear[r] & \Z/2.
    }
 \]
 There is a unique nonzero homomorphism $t\:\Z/2\to\Z/4$ and a unique
 nonzero homomorphism $s\:\Z/4\to\Z/2$, and using these 
 we can describe $\eta'$, $\chi'$ and $\phi'$ as follows:
 \begin{align*}
  \psi'(g) &= (sg,0) \\
  \eta'(g,h) &= (0,g) \\
  \chi'(f,g) &= tg.
 \end{align*}
\end{proposition}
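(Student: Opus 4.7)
The plan is to compute $\Dl(N)(x) = \EED(N, F_{\Dl(x)})$ for each $x \in \{a,b,c\}$ directly, using the explicit descriptions of $F_a, F_b, F_c$ and of the morphisms $F_\rho, F_\eta, F_\bt$ tabulated in Corollary~\ref{cor-sixteen}. A morphism $G : N \to F_y$ in $\EED = [\CJ^\opp, \Ab]$ amounts to a triple of component maps $(G_a, G_b, G_c)$ subject to naturality squares for the three generating morphisms $\rho, \eta, \bt$ of $\CJ$. In each case I would show that the claimed data determine a unique morphism, with the apparently-missing naturality constraints forced by the relations $2\eta = 0$, $2\psi = 0$, $2\chi = 0$, and $\chi\eta\psi = 2\cdot 1_B$ that hold in $\EED$.

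For $\Dl(N)(b) = \EED(N, F_b)$ the $\bt$-naturality square reads $tG_c = G_b \chi$. Since $2\chi = 0$ in $\EED$, we have $\chi(C) \sse B[2]$, so $G_b\chi$ takes values in $(\Z/4)[2] = \image(t)$, and $G_c$ is therefore uniquely determined. The $\eta$-square then forces $G_a = G_c \eta$. The remaining $\rho$-square $G_a\psi = sG_b$ follows by applying $t$ to both sides and using $\chi\eta\psi = 2\cdot 1_B$: one computes $t(G_a\psi) = G_b\chi\eta\psi = 2G_b = t(sG_b)$, and $t$ is injective. Thus $G_b \in \Hom(B,\Z/4)$ is the only independent datum, giving $B' = \Hom(B,\Z/4)$. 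The analysis for $\Dl(N)(a) = \EED(N, F_c)$ is parallel: $G_a$ is forced to vanish (the codomain is $0$); the $\bt$-square yields the displayed compatibility between $G_b : B \to \Z/2$ and $G_c : C \to \Z$; and the $\eta$-square $G_c\eta = 0$ is automatic, since $\eta(A) \sse C[2]$ and so $G_c(\eta(A)) \sse \Z[2] = 0$. Similarly, for $\Dl(N)(c) = \EED(N, F_a)$ the $\bt$-square determines $G_c = G_b\chi$, the $\eta$-square becomes the displayed compatibility between $G_a : A \to \Z$ and $G_b : B \to \Z/2$, and the $\rho$-square $G_a\psi = 0$ is automatic since $\psi(B) \sse A[2]$ while $\Z$ is torsion-free.

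With the underlying sets identified, the formulas for $\psi', \eta', \chi'$ follow by reading off the components of $F_\bt, F_\eta, F_\rho$ from the display in Corollary~\ref{cor-sixteen} and postcomposing. The map $\psi'$ uses $F_\bt : F_b \to F_c$, whose only nonzero component is $(F_\bt)_b = s$, giving $\psi'(g) = (sg, 0)$; $\eta'$ uses $F_\eta : F_c \to F_a$, whose $b$-component is the identity and $a$-component is zero, giving $\eta'(g,h) = (0, g)$; and $\chi'$ uses $F_\rho : F_a \to F_b$, whose $b$-component is $t$, giving $\chi'(f,g) = tg$. The main obstacle throughout is bookkeeping: keeping straight which naturality square forces which component of the morphism, and verifying in each case that the constraints collapse to exactly those stated, using the algebraic identities of $\EED$.
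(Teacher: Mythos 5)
Your proposal is correct and follows essentially the same route as the paper: both compute $\Dl(N)(x)=\EED(N,F_{\Dl(x)})$ square by square from the tables in Corollary~\ref{cor-sixteen}, use $2\eta=2\psi=2\chi=0$ and torsion-freeness of $\Z$ to discharge the automatic squares, show the projection of $\EED(N,F_b)$ onto the $B$-component is bijective via $\chi\eta\psi=2\cdot 1_B$ and injectivity of $t$, and read off $\psi',\eta',\chi'$ by postcomposition with $F_\bt,F_\eta,F_\rho$. The only remaining work is the bookkeeping you already identify, which matches the paper's "leave it to the reader" steps.
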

\begin{proof}\ \\
 By definition we have
 \[ A' = \Dl(N)(a) = \EED(N,F_{\Dl(a)}) = \EED(N,F_c). \] 
 Looking back to Corollary~\ref{cor-sixteen}, we see that
 this is the set of triples $(f,g,h)$ making the following diagram
 commute: 
 \[ \xymatrix{
      B \ar[r]^\psi \ar[d]_g & 
      A \ar[r]^\eta \ar[d]_f & 
      C \ar[r]^\chi \ar[d]^h &
      B \ar[d]^g \\
      \Z/2 \ar[r] & 
      0 \ar[r] &
      \Z \ear[r] &
      \Z/2.
    }
 \]
 Of course $f$ must be zero and the first square commutes
 automatically.  Note also that for any homomorphism $h\:C\to\Z$ we
 have $2h\eta = h\circ(2\eta)=0$ but $\Z$ is torsion free so
 $h\eta=0$.  Thus, the second square also commutes automatically and
 we need only consider the third one.  This gives the stated
 description of $A'$.  

 Similarly, $C'$ is the set of triples $(f,g,h)$ making the diagram 
 \[ \xymatrix{
      B \ar[r]^\psi \ar[d]_g & 
      A \ar[r]^\eta \ar[d]_f & 
      C \ar[r]^\chi \ar[d]^h &
      B \ar[d]^g \\
      \Z/2 \ar[r]_0 & 
      \Z \ear[r] &
      \Z/2 \ar[r]_1 &
      \Z/2.
    }
 \]
 The first square commutes automatically because $2\psi=0$.  The third
 square forces $h$ to be $g\chi$, so we need not mention $h$
 explicitly.  The only remaining condition is that $g\chi\eta$ should
 be the mod two reduction of $f$, which is the stated description of
 $C'$.  

 Next, $B'$ is the set of triples $(f,g,h)$ making the diagram on the
 left (and therefore also the diagram on the right) commute:
 \[ \xymatrix{
      B \ar[r]^\psi \ar[d]_g & 
      A \ar[r]^\eta \ar[d]_f & 
      C \ar[r]^\chi \ar[d]^h &
      B \ar[d]^g & & 
      C/2 \ar[d]^h \ar@{>->}[r] & 
      B \ar[d]^g \ear[r] &
      A[2] \ar[d]^f \\
      \Z/4 \ear[r]_s & 
      \Z/2 \ar[r]_1 &
      \Z/2 \mar[r]_t &
      \Z/4 & & 
      \Z/2 \mar[r]_t &
      \Z/4 \mar[r]_s &
      \Z/2.
    }
 \]
 There is a projection $\pi\:B'\to\Hom(B,\Z/4)$ given by
 $(f,g,h)\mapsto g$, and we claim that this is bijective.  First,
 suppose that $g=0$.  The left half of the right hand diagram shows
 that $h=0$, and then the middle square of the left diagram shows that
 $f=h\eta=0$.  This proves that $\pi$ is injective.  Suppose instead
 that we start with an arbitrary map $g\:B\to\Z/4$.  As $2\chi=0$ we
 have $2g\chi=0$ so the map $g\chi\:C\to\Z/4$ factors through
 $(\Z/4)[2]=t(\Z/2)$.  Thus, there is a unique map $h\:C\to\Z/2$ with
 $g\chi=th$, so the right square of the left diagram commutes.  We
 define $f=h\eta\:A\to\Z/2$, which makes the middle square commute.
 We now have $tf\psi=th\eta\psi=g\chi\eta\psi$, and
 $\chi\eta\psi=2.1_B$ so $tf\psi=2g$.  On the other hand,
 $ts=2\:\Z/4\to\Z/4$, so we also have $tsg=2g$.  As $t$ is injective
 and $tf\psi=2g=tsg$ we conclude that $f\psi=sg$, so the left square
 commutes.  We thus have an element $(f,g,h)\in B'$ with
 $\pi(f,g,h)=g$, showing that $\pi$ is actually an isomorphism.  We
 will therefore identify $B'$ with $\Hom(B,\Z/4)$.

 Now consider the picture 
 \[ \xymatrix{
      B \ar[r]^\psi \ar[d]_g & 
      A \ar[r]^\eta \ar[d]_f & 
      C \ar[r]^\chi \ar[d]^h &
      B \ar[d]^g \\ 
      \Z/4 \ear[r]_s \ear[d]_s & 
      \Z/2 \ar[r]_1 \ar[d]_0 &
      \Z/2 \mar[r]_t \ar[d]^0 &
      \Z/4 \ar[d]^s \\
      \Z/2 \ar[r] &
      0 \ar[r] &
      \Z \ear[r]_s &
      \Z/2
    }
 \]
 The second and third rows are $F_{\Dl(b)}=F_b$ and $F_{\Dl(a)}=F_c$,
 and the map between them is $F_{\Dl(\rho)}=F_\bt$.  The map from the
 first to the second row defines a general element of
 $B'=\EED(N,F_b)$.  The map $\psi'\:B'\to A'$ is defined by
 composing vertically to give the system of maps $0\:A\to 0$,
 $sg\:B\to\Z/2$ and $0\:C\to\Z$.  In terms of our slightly more
 compact notation for elements of $B'$ and $A'$, the formula is
 $\psi'(g)=(sg,0)$ as claimed.  We leave it to the reader to check the
 formulae $\eta'(g,h)=(0,g)$ and $\chi'(f,g)=tg$ in the same way.
\end{proof}

\begin{remark}\label{rem-Dl-zero}
 It follows easily from the above that the torsion subgroups in $A'$
 and $C'$ have exponent $2$ (or are trivial).  Thus, if $A$ or $C$ has
 any elements of order greater than $2$, we see that
 $N\not\simeq\Dl^2(N)$.  In fact, if $A$ and $C$ are finite groups of
 odd order (which forces the structure maps to be zero, and $B$ to be
 the zero group) then $\Dl(N)=0$.
\end{remark}

\begin{remark}
 Consider an object $N=(A,B,C,\eta,\chi,\psi)\in\EEED$.  Suppose that
 $A$ and $C$ are elementary abelian groups of finite rank, and that
 $\eta\:A\to C$ is zero.  We then see from
 Proposition~\ref{prop-H-equivalence} that $B=A\oplus C$, and
 Proposition~\ref{prop-Dl-formulae} gives $A'=\Hom(A,\Z/2)$ and
 $B'=C'=\Hom(A,\Z/2)\oplus\Hom(C,\Z/2)$.  Just by comparing orders we
 deduce that the sequence $C=C/2\to B\to A[2]=A$ cannot be short
 exact, so $\Dl(N)\not\in\EEED$.  Thus, the functor
 $\Dl\:\EED^\opp\to\EED$ does not preserve the category $\EEED$.
\end{remark}

\begin{bibdiv}
\begin{biblist}

\bib{ba:ah}{book}{
  author={Baues, Hans~J.},
  title={Algebraic homotopy},
  series={Cambridge Studies in Advanced Mathematics},
  publisher={Cambridge University Press},
  date={1989},
  volume={15},
}

\bib{ba:ht}{book}{
   author={Baues, Hans-Joachim},
   title={Homotopy type and homology},
   series={Oxford Mathematical Monographs},
   note={Oxford Science Publications},
   publisher={The Clarendon Press Oxford University Press},
   place={New York},
   date={1996},
   pages={xii+489},
   isbn={0-19-851482-4},
   review={\MR{1404516 (97f:55001)}},
}

\bib{fr:sh}{incollection}{
  author={Freyd, Peter},
  title={Stable homotopy},
  date={1966},
  booktitle={Proc. conf. categorical algebra (La Jolla, Calif., 1965)},
  publisher={Springer},
  address={New York},
  pages={121\ndash 172},
  review={\MR {35 \#2280}},
}

\bib{gapo:cdc}{article}{
  author={Popesco, Nicolae},
  author={Gabriel, Pierre},
  title={Caract\'erisation des cat\'egories ab\'eliennes avec g\'en\'erateurs et limites inductives exactes},
  language={French},
  journal={C. R. Acad. Sci. Paris},
  volume={258},
  date={1964},
  pages={4188--4190},
  review={\MR {0166241 (29 \#3518)}},
}

\bib{to:oic}{article}{
  author={Toda, Hirosi},
  title={Order of the identity class of a suspension space},
  journal={Ann. of Math. (2)},
  volume={78},
  date={1963},
  pages={300--325},
  issn={0003-486X},
  review={\MR {0156347 (27 \#6271)}},
}

\end{biblist}
\end{bibdiv}

\end{document}